\documentclass [12 pt, twoside]{amsart}
\textwidth=33.5cc

\usepackage{enumerate}

\oddsidemargin 0 true mm \evensidemargin 0 true mm
\usepackage{amsmath,amsfonts,amsthm,amssymb}
\usepackage[all]{xy}

\textwidth=14.5cm \textheight=22cm \oddsidemargin=.5cm \evensidemargin=.5cm \headsep=.8cm

\newtheorem{thm}{Theorem}[section]
\newtheorem{prop}{Proposition}[section] 
\newtheorem{lem}{Lemma}[section] 

\theoremstyle{definition}

\newcommand{\bi} {{\beta}}

\newcommand{\ga} {{\gamma}}

\newcommand{\Ga} {{\varGamma}}

\newcommand{\ld} {{\ldots}}
\newcommand{\sm} {{\smallsetminus}}
\newcommand{\thi} {{\theta}}
\newcommand{\de} {{\delta}}

\newcommand{\si} {{\sigma}}

\newcommand{\la} {{\lambda}}

\newcommand{\el} {{\ell}}
\newcommand{\e} {{\varepsilon}}
\newcommand{\f} {{\varphi}}
\newcommand{\mi} {{\mu}}

\newcommand{\dis}{\displaystyle}
\newcommand{\ssum}{\sum\limits}

\newcommand{\ct}{{\mathcal{T}}}

\newcommand{\cu}{{\mathcal{U}}}

\newcommand{\ch}{{\mathcal{H}}}

\newcommand{\cn}{{\mathcal{N}}}
\newcommand{\cde}{{\mathcal{D}}}

\newcommand{\ra}{{\rightarrow}}
\newcommand{\fa}{{\forall}}

\newcommand{\qb}{$\quad\blacksquare$}
\def\1{\it1\hspace*{-0.150cm}{\footnotesize{I}}}

\def\R{{\mathbb{R}}}
\def\C{{\mathbb{C}}}
\def\Q{{\mathbb{Q}}}
\def\N{{\mathbb{N}}}

\numberwithin{equation}{section}

\begin{document}

\title[ Common hypercyclic vectors ]{ Existence of common hypercyclic vectors for translation operators}

\author[Nikos Tsirivas]{Nikos Tsirivas}
\address{University College Dublin, School of Mathematical Sicences, Belfield, Dublin 4, Dublin, Ireland. Current Adress: Department of Mathematics and Applied Mathematics, University of Crete, GR-700 13 Heraklion, Crete, Greece}
\email{tsirivas@uoc.gr}

\thanks{The author was supported by Science Foundation Ireland under Grant
09/RFP/MTH2149 }

\subjclass[2010]{47A16,30E10}

\date{}

\keywords{Hypercyclic operator, common hypercyclic functions, translation operator}

\begin{abstract}
Let $\ch(\C)$ be the set of entire functions endowed with the topology $\ct_u$ of local uniform
convergence. Fix a sequence of non-zero complex numbers $(\la_n)$ with $|\la_n|\to +\infty$ and
$|\la_{n+1}|/|\la_n|\to 1$. We prove that there exists a residual set $G\subset \ch(\C)$ such that for every
$f\in G$ and every non-zero complex number $a$ the set $\{ f(z+\la_na):n=1,2,\ldots \}$ is dense in
$(\ch(\C),\ct_u)$. This provides a very strong extension of a theorem due to Costakis and Sambarino in \cite{6}.
Actually, in \cite{6} the above result is proved only for the case $\la_n=n$. Our result is in a sense best
possible, since there exist sequences $( \la_n )$, with $|\la_{n+1}|/|\la_n| \to l$ for some $l>1$, for which
the above result fails to hold \cite{8}.

\end{abstract}

\maketitle

\section{Introduction}\label{sec1}
\noindent Let us first fix some standard notation and terminology. Throughout this paper, we denote $\N=$
$\{1,2,\ld\}$, $\Q$, $\mathbb{R}$, $\mathbb{C}$ for the sets of natural, rational, real and complex numbers
respectively. By $\ch(\C)$ we denote the set of entire functions endowed with the topology $\ct_u$ of local
uniform convergence. For a subset $A$ of $\ch(\C)$ the symbol $\overline{A}$ denotes the closure of $A$ with
respect to the topology $\ct_u$. Let $X$ be a topological vector space. A subset $G$ of $X$ is called
$G_{\de}$ if it can be written as a countable intersection of open sets in $X$ and a subset $Y$ of $X$ is called
residual if it contains a $G_{\de}$ and dense subset of $X$.

A classical result of Birkhoff \cite{4}, which goes back to 1929, says that there exist entire functions the
integer translates of which are dense in the space of all entire functions endowed with the topology $\ct_u$ of
local uniform convergence (see also Luh \cite{11} for a more general statement). Birkhoff's proof was
constructive. Much later, during 80's, Gethner and Shapiro \cite{9} and independently Grosse-Erdmann
\cite{Grosse1} showed that Birkhoff's result can be recovered as a particular case of a much more general
theorem, through the use of Baire's category theorem. This approach, simplified substantially Birkhoff's
argument and in addition gave us precise information on the topological size of these functions. In particular,
Grosse-Erdmann proved that for every fixed sequence of complex numbers $(w_n)$ with $w_n\to \infty$, the set
$$
\Big\{ f\in \ch(\C)| \,\, \overline{\{ f(z+w_n):\;n\in\N\}} = \ch(\C) \Big\}
$$
is  $G_\de$ and dense in $\ch(\C)$, and hence ``big" in the topological sense.

Let us now rephrase the above results using the modern language of hypercyclicity. Let $(T_n:X\ra X)$ be a
sequence of continuous linear operators on a topological vector space $X$. For $x\in X$ the set $Orb(\{ T_n\}
,x):= \{ T_n(x): n=1,2,\ldots \}$ is called the \textit{orbit} of $x$ under $(T_n)$. If $(T_n(x))_{n\ge1}$ is
dense in $X$ for some $x\in X$, then $x$ is called \textit{hypercyclic} for $(T_n)$ and we say that $(T_n)$ is
\textit{hypercyclic} \cite{2}, \cite{10}. The symbol $HC(\{ T_n\})$ stands for the collection of all hypercyclic
vectors for $(T_n)$. In the case where the sequence $(T_n)$ comes from the iterates of a single operator $T:X\to
X$, i.e. $T_n:=T^n$, then we simply say that $T$ is \textit{hypercyclic} and $x$ is \textit{hypercyclic} for
$T$. If $T:X\to X$ is hypercyclic then the symbol $HC(T)$ stands for the collection of all hypercyclic vectors
for $T$. Following standard terminology, for an operator $T$ on $X$ the set $Orb(T,x):=\{ x,T(x), T^2(x), \ldots
\}$ is called the \textit{orbit} of $x$ under $T$. A simple consequence of Baire's category theorem is that for
every continuous linear operator $T$ on a separable topological vector space $X$, if $HC(T)$ is non-empty then
it is necessarily ($G_{\de}$ and) dense. For an account of results on the subject of hypercyclicity we refer to
the recent books \cite{2}, \cite{10}, see also the very influential survey article \cite{Grosse2}.

For every $a\in \mathbb{C} \setminus \{ 0\}$ consider the translation operator $T_a:\ch(\C) \to \ch(\C)$ defined
by $$T_a(f)(z)=f(z+a), \,\,\, f\in \ch(\C).$$ Thus, for $a=1$ Birkhoff's result says that $T_1$ is hypercyclic.
We note that the choice $a=1$ is not significant. The same proof works nicely for every $a\in \mathbb{C}
\setminus \{ 0\}$, that is, for such $a$, $T_a$ is hypercyclic and hence $HC(T_a)$ is $G_{\de}$ and dense in
$\ch(\C)$.

Recently, Costakis and Sambarino \cite{6} established a notable strengthening of Birkhoff's result. Namely, they
showed that, for almost all entire functions $f$, in the sense of Baire category, the set of the translates of
$f$ with respect to $na$, $n\in\N$, is dense in the space of all entire functions for every non-zero complex
number $a$. The significant new element here is the uncountable range of $a$. In the language of hypercyclicity
their result takes the following form: the family $\{ T_a| \,\, a\in \mathbb{C} \setminus \{ 0\} \}$ has a
residual set of common hypercyclic vectors i.e.,
$$ \textrm{ the set} \,\, \bigcap_{a\in \mathbb{C}\setminus \{ 0\}} HC(T_a) \,\,\, \textrm{is residual}
\,\,\,\textrm{in}\,\,\,\ch(\C),$$ or equivalently, the set
$$
\bigcap_{a\in \mathbb{C}\setminus \{ 0\} } \Big\{ f\in \ch(\C)| \,\, \overline{\{ f(z+na):\;n\in\N\}} = \ch(\C)
\Big\}
$$
is residual in $\ch(\C)$. In particular, it is non-empty.

Subsequently, Costakis \cite{7} asked whether, in this result, $n$ can be replaced by more general sequences
$(\la_n)$ of non-zero complex numbers.\\

{\bf Question}\cite{7}. Fix a sequence $(\la_n)$ of non-zero complex numbers such that $|\la_n|\ra\infty$. Are
there entire functions $f$ such that, \textit{for all} $a\in\C\sm\{0\}$, the set $\{f(z+\la_na):\;n\in\N\}$ is
dense in the space
of all entire functions?\\

In this direction Costakis \cite{7} showed that, if the sequence $(\la_n)$ satisfies a certain condition, then
the desired conclusion holds if we restrict attention to $a\in C(0,1):=\{z\in\C/|z|=1\}$. The precise condition
is that, for every $M>0$, there exists a subsequence $(\la_{n_k})$ of $(\la_n)$ such that

(i) $|\la_{n_{k+1}}|-|\la_{n_k}|>M$ for every $k=1,2,\ld$ and

(ii) $\ssum^\infty_{k=1}\dfrac{1}{|\la_{n_k}|}=+\infty$.

Obviously, sequences of the form $\la_n=bn+c$, where $b,c\in\C$, $b\neq0$, $\la_n= n(\log n)^p$, where $0<p\leq
1$ or $\la_n =n\log n \log \log n$, etc., satisfy the above condition. On the other hand, the case where the
sequence $\la_n$ is sparse, say $n^2$, is left open, since in this case condition $(ii)$ is not satisfied. And
actually this is not accidental; it reflects the limitation of the method developed in \cite{7}. This drawback
is due to a specific ``one-dimensional partition" that the author chooses. Here we overcome this difficulty, by
constructing a ``two dimensional" partition, which turns out to be the right one in order to handle sequences
$(\la_n)$ where the corresponding series in condition $(ii)$ converges. The purpose of this paper is to give a
positive answer for general $a\in\C\sm\{0\}$ that applies to a wide family of sequences $(\la_n)$. In
particular, our main result, Theorem \ref{thm1}, covers the case where $(\la_n)$ is of the form $(p(n))$, and
$p$ is any non-constant complex polynomial, as well as the case where $\la_n=e^{n^b}$ for $0<b<1$; hence for
every $0<b<1$ we have
$$
\bigcap_{a\in \mathbb{C}\setminus \{ 0\} }\Big\{ f\in \ch(\C)| \,\, \overline{\{ f(z+e^{n^b}a):\;n\in\N\}} =
\ch(\C)\Big\} \neq \emptyset.
$$
We would like to stress that the allowed growth $e^{n^b}$, $0<b<1$ in the previously mentioned example is in a
sense optimal, since the answer to the above question is negative if $(\la_n)$ grows exponentially, \cite{8},
that is,
$$
\bigcap_{a\in C(0,1) }\Big\{ f\in \ch(\C)| \,\, \overline{\{ f(z+e^na):\;n\in\N\}} = \ch(\C)\Big\} =\emptyset.
$$
So some restriction on the nature of $(\la_n)$ is clearly necessary.

Let $(\la_n)$ be a sequence of non-zero complex numbers. We adjust a non-negative real number to the sequence
$\Lambda:=(\la_n)$, the following:
\begin{align*}
i(\Lambda ):=\inf\bigg\{&a\in\R \cup \{ +\infty \} |\;\text{there exists a subsequence}\; (\mi_n)\;\text{of}\; (\la_n)\;\text{such that}\\
&a=\underset{n\ra+\infty}{\lim\sup}\bigg|\frac{\mi_{n+1}}{\mi_n}\bigg|\bigg\}.
\end{align*}
Of course $i(\Lambda )\in[0,+\infty]$. If $\la_n\ra\infty$ as $n\ra+\infty$ then $i(\Lambda )\in[1,+\infty]$.
Our main result is the following
\begin{thm}\label{thm1}
Let $\Lambda:=(\la_n)$ be a fixed sequence of non-zero complex numbers such that $\la_n\ra\infty$ as
$n\ra+\infty$ and $i(\Lambda )=1$. Then, the set
\[
\bigcap_{a\in\C\sm\{0\}}HC(\{T_{\la_na}\} ) \ \ \text{is a} \ \ G_\de, \ \ \text{dense subset of} \ \
(\ch(\C),\ct_u).
\]
In particular, there exists $f\in \ch(\C)$ such that for every $a\in \mathbb{C}\setminus \{ 0\}$
$$\overline{ \{ f(z+\la_na)| \,\, n=1,2,\ldots \} }=  \ch(\C) .$$
\end{thm}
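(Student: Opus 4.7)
My plan is to use the Baire category approach for common hypercyclicity, reducing the theorem to a single constructive approximation lemma in which the two-dimensional partition announced in the introduction does the real work.

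First, I set up the $G_\de$ structure. Let $\{p_j\}_{j\in\N}$ be a countable dense set in $\ch(\C)$ (say, polynomials with Gaussian-rational coefficients) and $K_s:=\{1/s\le|a|\le s\}$ an exhaustion of $\C\sm\{0\}$ by compact annuli. For $j,m,k,s,N\in\N$ put
\[
W(j,m,k,s,N):=\Big\{f\in\ch(\C):\fa a\in K_s,\,\exists n\le N,\ \max_{|z|\le m}|f(z+\la_n a)-p_j(z)|<1/k\Big\}.
\]
Each such set is open: given $f$ in it, associate to every $a\in K_s$ an admissible $n(a)\le N$; continuity of $a\mapsto T_{\la_{n(a)}a}f$ gives a neighbourhood of $a$ still admitting $n(a)$, and compactness of $K_s$ produces a uniform $\ch(\C)$-neighbourhood of $f$ on which the whole condition persists. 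Hence $W(j,m,k,s):=\bigcup_N W(j,m,k,s,N)$ is open, and
\[
\bigcap_{a\in\C\sm\{0\}}HC(\{T_{\la_n a}\})=\bigcap_{j,m,k,s}W(j,m,k,s)
\]
is $G_\de$. By Baire, the theorem reduces to proving that each $W(j,m,k,s)$ is dense, which in turn reduces to the following \emph{approximation lemma}: for every $g\in\ch(\C)$, polynomial $p$, compact $K\subset\C\sm\{0\}$ and $\ep,R>0$, there exists $f\in\ch(\C)$ with $\max_{|z|\le R}|f-g|<\ep$ such that every $a\in K$ admits an $n\in\N$ with $\max_{|z|\le R}|f(z+\la_n a)-p(z)|<\ep$.

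To prove the approximation lemma, extract from $i(\La)=1$ a subsequence $(\mi_n):=(\la_{k_n})$ with $|\mi_{n+1}|/|\mi_n|\to1$, and reduce to $K=\{r_1\le|a|\le r_2\}$. The goal is to produce a finite partition $K=\bigsqcup_l S_l$, indices $n_l$, and representatives $a_l^*\in S_l$ such that the centres $c_l:=\mi_{n_l}a_l^*$ satisfy (i) the disks $\overline{D(c_l,2R)}$ are pairwise disjoint and disjoint from $\overline{D(0,R)}$, and (ii) $|\mi_{n_l}|\cdot\text{diam}(S_l)<\min(\ep/C,R)$, where $C$ depends only on $p$ and $R$. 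Granted this, Runge's theorem applied to the disjoint union $\overline{D(0,R)}\cup\bigcup_l\overline{D(c_l,2R)}$ yields an entire $f$ with $|f-g|<\ep$ on $\overline{D(0,R)}$ and $|f(w)-p(w-c_l)|<\ep$ on each $\overline{D(c_l,2R)}$; for $a\in S_l$, $|\mi_{n_l}a-c_l|\le R$ by (ii), so $\{z+\mi_{n_l}a:|z|\le R\}\subset\overline{D(c_l,2R)}$ and $f(z+\mi_{n_l}a)\approx p(z+\mi_{n_l}a-c_l)\approx p(z)$, the last step by uniform continuity of $p$ on $\overline{D(0,R)}$ combined with (ii).

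The main obstacle is the construction of this partition; it is the two-dimensional refinement of Costakis' one-dimensional scheme \cite{7}. I would fix a large integer $M$, split $[0,2\pi)$ into $M$ target arcs $B_0,\ldots,B_{M-1}$ of length $2\pi/M$, and assign to each working index $n$ one target arc $B_{k(n)}$. The angular slice of the layer-$n$ sectors is then declared to be $B_{k(n)}-\arg\mi_n\pmod{2\pi}$, forcing $\mi_n\cdot(\text{layer }n)$ to live inside the fixed arc $B_{k(n)}$ of the ambient plane; within that arc it occupies radially a subinterval of $|\mi_n|[r_1,r_2]$, which is further subdivided at scale $\min(\ep/C,R)/|\mi_n|$ to secure (ii). The delicate point is arranging $n\mapsto k(n)$ so that (a) every direction in $[0,2\pi)$ is captured by at least one layer and (b) the cells $\mi_n\cdot(\text{layer }n)$ are pairwise disjoint across $n$. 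Because each $\mi_n$ now occupies only a $1/M$-fraction of the annulus $|\mi_n|[r_1,r_2]$, the inevitable radial overlap implied by $|\mi_{n+1}|/|\mi_n|\to1$ is absorbed by the angular separation, and choosing $M$ large relative to $r_2/r_1$ and to the ratio bound $|\mi_{n+1}|/|\mi_n|-1$ (which tends to $0$) makes the whole scheme feasible. This is precisely where $i(\La)=1$ is used in full strength and where the argument improves on \cite{7}, which required the divergent series condition on a subsequence of $(|\la_n|)$.
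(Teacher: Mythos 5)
Your global framework (reduction via Baire's theorem over compact annuli, openness of the approximating sets, and an approximation lemma proved by Runge/Mergelyan on a family of pairwise disjoint disks attached to a partition of the parameter set) is exactly the paper's strategy, so the entire weight of the proof falls on the construction of the partition together with the assignment $l\mapsto n_l$, and this is where your sketch breaks down. First, your condition (ii) bounds the \emph{full} diameter of $S_l$, not only its radial extent. In your scheme every layer-$n$ cell has angular width $2\pi/M$ with $M$ a fixed constant, so $|\mu_{n_l}|\,\mathrm{diam}(S_l)\ge 2\pi r_1 |\mu_{n_l}|/M \to +\infty$; hence (ii) fails for all but finitely many layers, and for a parameter $a$ at the angularly far end of its cell the set $\{z+\mu_{n_l}a:|z|\le R\}$ is nowhere near $\overline{D(c_l,2R)}$, so the Runge step gives no control there. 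Repairing this forces an angular subdivision at scale $\approx 1/|\mu_n|$, i.e.\ on the order of $|\mu_n|$ cells along each arc; and these cannot all carry the same index because of a second clash: (i) and (ii) are incompatible for two cells sharing an index. Indeed, if $S_l\neq S_{l'}$ have $n_l=n_{l'}=n$ and are adjacent --- which is exactly what your radial subdivision at scale $\min(\varepsilon/C,R)/|\mu_n|$ produces --- then $|c_l-c_{l'}|\approx \min(\varepsilon/C,R)\le R<4R$, and the two radius-$2R$ disks overlap. So an index may recur along an arc only after a parameter separation exceeding $4R/|\mu_n|$, which means a whole block of indices must be assigned cyclically along each arc, with the block long enough that when an index returns the accumulated angular gap is large; this is precisely the content of the paper's five-step partition and of property (2.1) (that $|\lambda_n|\sum_{k=0}^{m_0-1}|\lambda_{n+k}|^{-1}$ is large), and it is the point where $i(\Lambda)=1$ really enters. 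Your final claim that ``choosing $M$ large makes the whole scheme feasible'' is exactly the unproved core.

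A second, related omission: disjointness of disks attached to cells at different radial levels but with nearby indices cannot be extracted from $|\mu_{n+1}|/|\mu_n|\to 1$ alone, which is all your proposal takes from $i(\Lambda)=1$. The paper first passes (Lemma \ref{sec7lem1}) to a subsequence satisfying $|\mu_{n+1}|-|\mu_n|\to+\infty$ and $\liminf_n n\big(|\mu_{n+1}/\mu_n|-1\big)>0$, and it is these quantitative properties, encoded in (2.2) and (2.5)--(2.8), that drive the radial separation estimates (Lemmas \ref{diskprop2} and \ref{diskprop6}); without such an extraction your layers at comparable radii have no reason to produce disjoint disks. Finally, a minor point: the inclusion $\bigcap_{a}HC(\{T_{\lambda_n a}\})\subset\bigcap_{j,m,k,s}W(j,m,k,s)$ in your $G_\delta$ identification is not automatic; it needs the compactness argument over the parameter set (openness in $a$ of the sets where a fixed index works, plus a finite subcover), which the paper carries out in Section 6 and which you assert but do not run.
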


All the work in this article is become in order to prove Theorem \ref{thm1}.\\

It seems to be an appropriate place here to comment on the ideas developed in \cite{6}, \cite{7} and to compare
them with our approach. Costakis and Sambarino's result mentioned above consists of two steps. The first one, is
to show that the set $\bigcap_{a\in C(0,1)} HC(T_a)$ is residual in $\ch(\C)$. This is accomplished by choosing
a suitable partition of the unit circle $C(0,1)$ and then an application of Runge's theorem on specific compact
sets depending on the partition concludes the argument. We stress that what we just said is a very rough idea of
their proof. In the second step they show that for any fixed $\theta \in \mathbb{R}$, $
HC(T_{e^{i\theta}})=HC(T_{re^{i\theta}})$ for every $r>0$. The proof of the latter is based on two important
results: the minimality of the irrational rotation, see for instance \cite{Ellis}, and Ansari's theorem
\cite{Ans}, which says that if $T$ is hypercyclic then for every $n\in \mathbb{N}$, $T^n$ is hypercyclic and in
addition $HC(T)=HC(T^n)$. One key element to prove Ansari's theorem is that the orbit $Orb(T,x)$ has a semigroup
structure, that is, if $T^n(x), T^m(x) \in Orb(T,x)$ then $T^n\circ T^m(x)\in Orb(T,x)$. Some nice extensions of
Ansari's theorem even in a non-linear setting can be found in \cite{Math}, \cite{Shka}, where still the
semigroup structure property plays important role in the proofs. Observe now that in our case, say $\Lambda
:=(\la_n)$, $\la_n \to \infty$ and assume for simplicity $\la_n \in \mathbb{N}$, the semigroup structure of the
orbit breaks down. The very simple reason for this ``unpleasant" situation is that we now consider parts of the
full orbit $\{ f(z+an): n=1,2,\ldots \}$, which may be very sparse. For instance, consider the sequence
$\la_n=n^2$ (for which Theorem \ref{thm1} holds). Clearly for $a\in \mathbb{C}\setminus \{ 0\}$, $f\in \ch(\C)$,
we have $T_{m^2a}\circ T_{l^2a}(f)\notin Orb( \{ T_{n^2a} \} ,f)$ in general. In view if this obstacle, we led
to follow a different approach and therefore we tried to concentrate on the first step in Costakis and
Sambarino's approach. Now the problem is how to find a suitable partition, not only for the set $C(0,1)$, which
is quite "thin", but for any given bounded sector $S$. So our main task is: for a given sequence $(\la_n)$
satisfying the hypothesis of Theorem \ref{thm1}, and a given bounded sector $S\subset \mathbb{C}$ find a
suitable partition of $S$ in order to show that the set $\bigcap_{a\in S}HC(\{T_{\la_na}\} )$ is $G_{\de}$ and
dense in $\ch(\C)$. Then, covering the complex plane by countable many such sectors and applying Baire's
category theorem we will be done. We mention that the second step of Costakis and Sambarino's result can be also
obtained as a particular case of a general result due to Conejero, M\"{u}ller and Peris \cite{5} concerning
hypercyclic $C_0$ semigroups, see also \cite{2}.

There is a fast growing literature on the subject of common hypercyclic vectors for certain uncountable families
of sequences of operators. For instance, Bayart and Matheron \cite{3}, answering a question from \cite{Cost},
they show, among other things, the existence of entire functions $f$ such that for every non-negative real
number $s\geq 0$ and for every $a\in \mathbb{C} \setminus \{ 0\}$, $\overline{ \{ n^sf(z+na): n=1,2,\ldots \}
}=\ch(\C)$. Shkarin in \cite{13}, extending the Costakis and Sambarino's result above, proves the following: the
set $\bigcap_{a,b\in \mathbb{C} \setminus \{ 0\} } HC ( bT_a)$ is residual in $\ch(\C)$. There are also several
results concerning the existence of common hypercyclic vectors for other type of operators such as, weighted
shifts, adjoints of multiplication operators, differentiation and composition operators; see for instance,
\cite{AbGo}, \cite{Ba1}-\cite{Ber}, \cite{ChaSa1}-\cite{GaPa}, \cite{10}, \cite{LeMu}, \cite{Math},
\cite{San}-\cite{13}. There are also results going to the opposite direction, namely the non-existence of common
hypercyclic vectors for certain families of operators, see \cite{Ba3}, \cite{BaGr1}, \cite{8}, \cite{13}. A most
worthy and very general result, due to Shkarin \cite{13}, is the following. For any given linear and continuous
operator $T$ acting on a complex topological vector space with non-trivial dual, the family $\{ rT+aI: r>0, a\in
\mathbb{C} \}$ does not have a common hypercyclic vector.

The paper is organized as follows. Sections $2-7$ occupy the proof of Theorem \ref{thm1}. In the last section,
section $8$, we give some illustrating examples of sequences $(\la_n)$ satisfying the hypothesis of Theorem
\ref{thm1}, which fall into four distinct classes.

\section{A special case of Theorem \ref{thm1}: an outline of the proof and notation}\label{sec2}
\noindent

In this section we provide a general framework for attacking our problem, by considering a particular case of
the sequence $(\la_n)$. It turns out that handling this case is actually all what we need in order to establish
our main result, namely Theorem \ref{thm1}. This reduction is explained and presented in full detail in section 7. Let us
now describe the extra properties we impose on the sequence $(\la_n)$.

Let $(\la_n)$ be a sequence of non-zero complex numbers satisfying the following:
\begin{enumerate}
\item[1)] $|\la_{n+1}|-|\la_n|\ra+\infty$ as $n\ra+\infty$
\item[2)] $\dfrac{\la_{n+1}}{\la_n}\ra1$ as $n\ra+\infty$
\item[3)] $\underset{n\ra+\infty}{\dis\lim\inf}\Big(n\Big(\Big|\dfrac{\la_{n+1}}{\la_n}\Big|
    -1\Big)\Big)>0$.
\end{enumerate}
A sample of sequences satisfying the above three properties is: $\la_n=n^c$, $c>1$, $\la_n=n^{\beta}\log n$,
$\beta \geq 1$, $\la_n=n^{\gamma}/\log (n+1)$, $\gamma >2$, etc. Our main task is to prove the following special
case of Theorem \ref{thm1}.
\begin{thm}\label{thm2}
Fix a sequence $(\lambda_n)$ of non-zero complex numbers which satisfies the above properties 1), 2), 3). Then
$\bigcap\limits_{a\in \mathbb{C}\setminus \{ 0\} }HC(\{ T_{\la_na}\} )$ is a $G_\de$ and dense subset of
$(\ch(\C),\ct_u)$.
\end{thm}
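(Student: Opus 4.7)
The plan is to follow the standard Baire--category scheme for common hypercyclicity, where the main technical novelty will be the construction of a two-dimensional partition of a bounded sector that is adapted to $(\la_n)$. First I would cover $\C\sm\{0\}=\bigcup_{\nu\in\N}S_\nu$ by a countable family of closed bounded sectors $S_\nu$, fix an enumeration $(P_j)$ of polynomials with coefficients in $\Q+i\Q$, and, for each $(\nu,j,m,k)\in\N^4$, set
$$
\cu(\nu,j,m,k):=\Bigl\{f\in\ch(\C):\forall a\in S_\nu,\ \exists n\in\N,\ \sup_{|z|\le m}|f(z+\la_na)-P_j(z)|<1/k\Bigr\}.
$$
Since $(P_j)$ is dense in $\ch(\C)$ and $\bigcup_\nu S_\nu=\C\sm\{0\}$, one checks easily that $\bigcap_{\nu,j,m,k}\cu(\nu,j,m,k)$ coincides with $\bigcap_{a\in\C\sm\{0\}}HC(\{T_{\la_na}\})$; by Baire's theorem everything reduces to showing that each $\cu(\nu,j,m,k)$ contains an open dense subset of $\ch(\C)$.

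The dense open sets will come from what I shall call a \emph{good partition} $\Pi=\{(S_i,a_i,n_i)\}_{i=1}^N$ of $S_\nu$. Fix $\de>0$ so small that $|P_j(z)-P_j(z')|<1/(2k)$ whenever $|z|,|z'|\le m+1$ and $|z-z'|\le\de$, and require
\begin{enumerate}
\item[(a)] $S_\nu=\bigcup_i S_i$ with $a_i\in S_i$;
\item[(b)] $|\la_{n_i}|\cdot\mathrm{diam}(S_i)\le\de$ for every $i$;
\item[(c)] the closed disks $\overline{D(0,m)}$ and $\overline{D(\la_{n_i}a_i,\,m+\de)}$, $i=1,\dots,N$, are pairwise disjoint.
\end{enumerate}
Given such a $\Pi$, the open set
$$
\mathcal{V}(\Pi):=\bigcap_{i=1}^N\Bigl\{f\in\ch(\C):\sup_{|w-\la_{n_i}a_i|\le m+\de}|f(w)-P_j(w-\la_{n_i}a_i)|<1/(2k)\Bigr\}
$$
lies inside $\cu(\nu,j,m,k)$: for $a\in S_i$ and $|z|\le m$ one puts $w:=z+\la_{n_i}a$, uses (b) to get $|w-\la_{n_i}a_i|\le m+\de$, and estimates
$$|f(z+\la_{n_i}a)-P_j(z)|\le|f(w)-P_j(w-\la_{n_i}a_i)|+|P_j(z+\la_{n_i}(a-a_i))-P_j(z)|<1/k.$$
Density of $\mathcal{V}(\Pi)$ is the usual Runge argument: by (c) the compact set $K:=\overline{D(0,m)}\cup\bigcup_i\overline{D(\la_{n_i}a_i,m+\de)}$ has connected complement, so an arbitrary $g\in\ch(\C)$ can be approximated on a large disk by an entire function that matches $P_j(w-\la_{n_i}a_i)$ within $1/(2k)$ on each target disk.

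The entire content of the proof is therefore the \emph{existence} of a good partition for every $(\nu,j,m,k)$. Conditions (b) and (c) pull in opposite directions: (b) asks for cells of diameter $\le\de/|\la_{n_i}|$, whereas (c) asks the points $\la_{n_i}a_i$ to be at least $2(m+\de)$ apart in the image plane. A simple area count rules out using a single index $n$ for an entire annular slab of $S_\nu$, and this is precisely why a \emph{two-dimensional} partition is needed instead of the one-dimensional arc partitions of \cite{6},\cite{7}. My plan is to exploit the three hypotheses together: condition~2) keeps the direction $\la_{n_i}/|\la_{n_i}|$ under control, so that a thin annulus in the $a$-plane maps to a near-annulus in the $w$-plane and the angular structure of $S_\nu$ is transported coherently; condition~3) supplies, via a $\sum 1/|\la_n|=\infty$ type estimate, enough indices $n$ in each multiplicative window $[R,R(1+\eta)]$ of moduli to label every angular cell appearing at that radial level; and condition~1) forces the moduli $|\la_{n_i}|$ attached to distinct labels to spread out, so that target disks coming from different $n_i$ automatically lie in disjoint radial bands of the $w$-plane, handling (c) across labels. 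Assigning indices $n_i$ to cells $S_i$ level by level in $|a|$ and verifying (a), (b), (c) simultaneously is the technical heart of the remaining sections, and is where I expect the main obstacle to lie.
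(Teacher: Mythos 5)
Your reduction is exactly the paper's: cover $\C\sm\{0\}$ by countably many closed bounded sectors, fix a dense sequence of polynomials, and reduce via Baire to producing, for each sector and each approximation datum, a dense open set of the form ``$f$ is close to $P_j(\cdot-\la_{n_i}a_i)$ on finitely many pairwise disjoint disks,'' obtained from Mergelyan/Runge once one has a finite partition of the sector with an index assignment satisfying your (a), (b), (c). The problem is that the proof stops precisely where the real work begins: the existence of such a good partition is asserted as ``the technical heart'' but never established, and it is not a routine verification --- it is the bulk of the paper (a five-step construction of a two-dimensional partition $P_m$ of the sector, an assignment $w\mapsto\la(w)$, and a chain of lemmas proving pairwise disjointness of the translated disks from quantitative consequences (2.1)--(2.8) of hypotheses 1), 2), 3)). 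Moreover, the roadmap you give for filling this in is partly miscalibrated: hypothesis 3) does not yield any ``$\sum 1/|\la_n|=+\infty$ type estimate'' --- for $\la_n=n^2$, which satisfies 1)--3), that series converges, and dispensing with such divergence hypotheses (the limitation of \cite{7}) is the very point of the theorem. In the actual argument the abundance of usable indices at each level comes from hypothesis 2): since $|\la_{n+1}|/|\la_n|\to1$, for large $n$ all of $|\la_n|,\dots,|\la_{n+m_0-1}|$ are comparable, which is what makes the sums in (2.1) and (2.4) large; hypothesis 3) is instead what separates disks attached to \emph{different} indices sitting at nearby radial levels (via (2.5) and (2.7)), and hypothesis 1) separates disks whose base points have comparable radii (via (2.2)). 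So the missing construction cannot be completed along the lines you sketch without reassigning the roles of the three hypotheses, and as it stands the proposal proves nothing beyond the (standard) outer framework.

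A secondary gap concerns the $G_\de$ assertion in the statement. Your sets $\cu(\nu,j,m,k)$ are uncountable intersections over $a\in S_\nu$ of conditions with an unbounded existential quantifier in $n$, so they are not open, and showing that each \emph{contains} a dense open set only proves that $\bigcap_{a}HC(\{T_{\la_na}\})$ is residual, not that it is itself $G_\de$. The paper gets the $G_\de$ structure by working with the sets $E(m,j,s,k)$ in which the quantifier is bounded ($n\le m$), proving these are open, and then proving the nontrivial inclusion $\bigcap_{a\in S}HC(\{T_{\la_na}\})\subset\bigcap_{j,s,k}\bigcup_m E(m,j,s,k)$ by an open-covering/compactness argument in the parameter $a$ (Section 6). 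Some such additional argument is needed to obtain the theorem as stated, not merely residuality.
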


Let us now describe the steps for the proof of Theorem \ref{thm2}. Consider the sectors
\[
S^k_n:=\bigg\{a\in\C|\exists\;r\in\bigg[\frac{1}{n},n\bigg]\;\text{and}\;t\in\bigg[\frac{k}{4},
\frac{k+1}{4}\bigg]\;\text{such that}\;a=re^{2\pi it}\bigg\}
\]
for $k=0,1,2,3$ and $n=2,3,\ld\,$. Since
\[
\bigcap_{a\in\C\sm\{0\}}HC(\{ T_{\la_na} \} )=\bigcap^3_{k=0}\bigcap^\infty_{n=2} \bigcap_{a\in S^k_n}HC(\{
T_{\la_na}\} ),
\]
an appeal of Baire's category theorem reduces the proof of Theorem \ref{thm2} to the following

\begin{prop}\label{prop1}
Fix a sequence $(\lambda_n)$ of non-zero complex numbers which satisfies the above properties 1), 2), 3). Fix
four real numbers $r_0,R_0,\thi_0,\thi_T$ such that $0<r_0<1<R_0<+\infty$, $0\le\thi_0<\thi_T \leq 1$,
$\thi_T-\thi_0=\dfrac{1}{4}$ and consider the sector $S$ defined by
\[
S:=\{a\in\C | \,\,\text{there exist} \,\,r\in[r_0,R_0] \ \ \text{and} \ \ t\in[\thi_0,\thi_T] \ \ \text{such
that} \ \ a=re^{2\pi it}\}.
\]
Then $\bigcap\limits_{a\in S}HC(\{ T_{\la_na}\} )$ is a $G_\de$ and dense subset of $(\ch(\C),\ct_u)$.
\end{prop}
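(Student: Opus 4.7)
The proof falls naturally into a $G_\de$ part and a density part, with the former standard and the latter the heart of the matter.

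\emph{Step 1 (the $G_\de$ property).} Take a countable dense sequence $(P_j)_j$ in $(\ch(\C),\ct_u)$ (e.g.\ polynomials with coefficients in $\Q+i\Q$), and for $j,s,m\in\N$ set
\[
E(j,s,m):=\Big\{f\in\ch(\C):\ \fa\,a\in S,\ \exists\,n\in\N\text{ with }\sup_{|z|\le s}|f(z+\la_n a)-P_j(z)|<1/m\Big\}.
\]
Density of $(P_j)_j$ gives $\bigcap_{a\in S}HC(\{T_{\la_n a}\})=\bigcap_{j,s,m}E(j,s,m)$. To see that each $E(j,s,m)$ is open, fix $f_0\in E(j,s,m)$: for every $a\in S$ pick $n(a)$ for which the inequality above holds strictly, and note that the condition on $(f,a)$ is open in the product topology. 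Compactness of $S$ extracts a finite subcover and yields a neighborhood of $f_0$ still contained in $E(j,s,m)$. Hence $\bigcap_{j,s,m}E(j,s,m)$ is $G_\de$.

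\emph{Step 2 (density: the Runge scheme).} By Baire, it suffices to prove each $E(j,s,m)$ is dense. Given a polynomial $P$, integer $s\ge 1$, target $Q=P_j$ and $\ep>0$, the goal is to produce $f\in\ch(\C)$ with $\|f-P\|_{K_0}<\ep$ (where $K_0=\overline{D(0,s)}$) such that for every $a\in S$ some translate $T_{\la_n a}f$ approximates $Q$ on $K_0$ within $1/m$. The plan is to fix a large $N_0$, choose finitely many pairs $(a_i,n_i)_{i=1}^{J}$ with $a_i\in S$ and distinct $n_i\ge N_0$, together with a partition $S=\bigsqcup_{i=1}^{J}S_i$ with $a_i\in S_i$; then apply Runge's theorem to
\[
K\ :=\ K_0\ \cup\ \bigcup_{i=1}^{J}\overline{D(\la_{n_i}a_i,s)}
\]
to get a polynomial $f$ approximating $P$ on $K_0$ and $Q(\,\cdot\,-\la_{n_i}a_i)$ on $\overline{D(\la_{n_i}a_i,s)}$. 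Finally one verifies, via continuity of the polynomial $f$ on $K$, that for $a\in S_i$
\[
\sup_{|z|\le s}|f(z+\la_{n_i}a)-Q(z)|\ \le\ \sup_{|z|\le s}|f(z+\la_{n_i}a)-f(z+\la_{n_i}a_i)|+\ep\ <\ 1/m,
\]
provided $|\la_{n_i}(a-a_i)|$ is small enough (uniformly over $a\in S_i$).

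\emph{Step 3 (the main obstacle).} For Runge's theorem to apply the disks $\overline{D(\la_{n_i}a_i,s)}$ must be pairwise disjoint, disjoint from $K_0$, and $\C\sm K$ must be connected. Simultaneously the estimate in Step~2 forces each cell $S_i$ to have diameter $\lesssim \de/|\la_{n_i}|$ for a small $\de$, so that covering the two-dimensional sector $S$ requires roughly $|\la_{n_i}|^{2}$ cells at each ``scale'' $|\la_{n_i}|$; the radial separation $|\la_{n+1}|-|\la_n|\to+\infty$ (condition (1)) gives disjointness of annular rings at well-chosen indices, while the slow growth $\la_{n+1}/\la_n\to 1$ and its quantitative form (conditions (2) and (3)) ensure that enough consecutive indices of each size are available to realize this packing without the union $K$ becoming a barrier that disconnects $\C$. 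Constructing the pairs $(a_i,n_i)$ so that the resulting $K$ is at the same time a tight $\de/|\la_{n_i}|$-net of $S$ in the parameter plane \emph{and} a Runge-admissible configuration in $\C$ is the \emph{two-dimensional partition} announced in the introduction, replacing the strictly angular partition of \cite{7}. This is where I expect the real technical work (Sections 3--6) to live; once the partition is in place, Runge plus the uniform continuity estimate above closes the density argument, and Baire concludes the proof of Proposition~\ref{prop1}.
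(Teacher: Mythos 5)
Your overall skeleton coincides with the paper's: reduce to countably many open sets via a dense sequence of polynomials, prove density of each by a Runge/Mergelyan construction on a union of a big central disk and disks centred at points $\la_{n_i}a_i$, and control all $a$ in a cell by a smallness condition of the form $|\la_{n_i}(a-a_i)|<\de_0$. Your Step 1 is correct and in fact streamlines the paper: by leaving the witness $n$ unbounded and proving openness of $E(j,s,m)$ through joint continuity in $(f,a)$ plus compactness of $S$, you merge what the paper does in Lemma \ref{lem2} and in the compactness argument of Section 6 that gives the hard inclusion of Lemma \ref{lem1}. Step 2, however, contains a circularity as written: you bound $\sup_{|z|\le s}|f(z+\la_{n_i}a)-f(z+\la_{n_i}a_i)|$ "via continuity of the polynomial $f$", but $f$ is produced by Runge only \emph{after} the cells $S_i$ and indices $n_i$ are fixed, so "small enough" cannot be quantified at the moment the partition is built. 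The fix is the one the paper uses: take the disks of radius $s+\de_0$, where $\de_0$ is a modulus of continuity of the fixed target $Q$ as in (\ref{sec5eq3}); then for $a\in S_i$ with $|\la_{n_i}(a-a_i)|<\de_0$ the point $z+\la_{n_i}a$ still lies in the enlarged disk, and one estimates $|f(z+\la_{n_i}a)-Q(z+\la_{n_i}(a-a_i))|$ by the approximation on $K$ and $|Q(z+\la_{n_i}(a-a_i))-Q(z)|$ by uniform continuity of $Q$, exactly as in (\ref{sec5eq9})--(\ref{sec5eq11}).

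The genuine gap is Step 3: what you defer to "the real technical work" is precisely the content of the Proposition. One must actually produce, for the given data, a finite family $(a_i,n_i)$ such that (i) every $a\in S$ lies in a cell of diameter at most of order $\de_0/|\la_{n_i}|$ around $a_i$, where $n_i$ is the very index assigned to that cell (a self-referential constraint), (ii) the closed disks $B+\la_{n_i}a_i$ together with $B$ are pairwise disjoint, and (iii) only finitely many indices are used. Your heuristic (about $|\la_n|^2$ cells per scale, radial separation from hypothesis 1), availability of indices from 2) and 3)) names the right ingredients but proves nothing; in the paper this is the whole machinery of Sections 3--4: the quantitative consequences (2.1)--(2.8) of hypotheses 1)--3), the partitions of order $0$, $1$, $2$ and their almost-disjointness properties, and the disjointness lemmas culminating in Lemma \ref{diskprop7}, with hypothesis 3) entering crucially in Lemma \ref{diskprop6} to separate disks coming from radially stacked arcs. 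Without carrying out some version of this construction, density of your $E(j,s,m)$ -- and hence the Proposition -- is not established. (A small simplification you could claim: once the finitely many closed disks are pairwise disjoint, the complement of $K$ is automatically connected, so the "barrier" issue you raise needs no separate argument.)
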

For the proof of Proposition \ref{prop1} we introduce some notation which will be carried out throughout this
paper. Let $(p_j)$, $j=1,2,\ld$ be a dense sequence of $(\ch(\C),\ct_u)$, (for instance, all the polynomials in
one complex variable with coefficients in $\Q+i\Q$). For every $m,j,s,k\in\N$ we consider the set $$E(m,j,s,k):=
\!\Big\{f\!\in\!\ch(\C)\,\, | \forall a\!\in\! S \,\,\,\exists  n\!\in\!\N, n\!\le\! m : \dis\sup_{|z|\le
k}|f(z\!+\!\la_na)\!-\!p_j(z)|\!<\!\frac{1}{s}\Big\}.$$
By Baire's category theorem and the three lemmas stated below, Proposition \ref{prop1} readily follows.
\begin{lem}\label{lem1}

\[
\bigcap_{a\in S}HC(\{ T_{\la_na}\} )=\bigcap^\infty_{j=1}\bigcap^\infty_{s=1}\bigcap^\infty_{k=1}
\bigcup^\infty_{m=1}E(m,j,s,k).
\]
\end{lem}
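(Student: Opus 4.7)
The plan is to observe that the two sides differ only in whether the index $n$ witnessing the approximation $\|f(\cdot+\la_n a)-p_j\|_k<1/s$ is allowed to depend arbitrarily on $a\in S$ or must lie below a bound $m=m(j,s,k)$ uniform in $a$; the equivalence then comes directly from the compactness of $S$ together with continuity of the map $a\mapsto f(\cdot+\la_n a)$. First I would reformulate the left-hand side: since $\ct_u$ is generated by the seminorms $\|g\|_k:=\sup_{|z|\le k}|g(z)|$ and $(p_j)$ is dense in $(\ch(\C),\ct_u)$, $f\in\bigcap_{a\in S}HC(\{T_{\la_n a}\})$ means precisely that for every $a\in S$ and all $j,s,k\in\N$ there exists $n\in\N$ with $\|f(\cdot+\la_n a)-p_j\|_k<1/s$.

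The inclusion $\supset$ is then immediate: if $f\in E(m,j,s,k)$, then for every $a\in S$ the same inequality holds for some $n\le m$, so dropping the bound gives the reformulated characterization above. For the inclusion $\subset$, fix $f$ on the left and fix $j,s,k$. For each $a\in S$ choose $n_a\in\N$ with $\phi_{n_a}(a)<1/s$, where $\phi_n(a):=\|f(\cdot+\la_n a)-p_j\|_k$. I would then verify that $\phi_n:S\to\R$ is continuous: as $a'\to a$ in $S$, the set $\{z+\la_n a':\,|z|\le k,\ a'\in S\}$ is a fixed compact subset of $\C$ on which $f$ is uniformly continuous, so $f(z+\la_n a')\to f(z+\la_n a)$ uniformly in $|z|\le k$, hence $\phi_n(a')\to\phi_n(a)$. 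Consequently $U_a:=\{a'\in S:\phi_{n_a}(a')<1/s\}$ is a relatively open neighborhood of $a$ in $S$. Since $S$ is the continuous image under $(r,t)\mapsto re^{2\pi it}$ of the compact rectangle $[r_0,R_0]\times[\thi_0,\thi_T]$, it is compact, so finitely many $U_{a_1},\dots,U_{a_\ell}$ cover $S$; setting $m:=\max\{n_{a_1},\dots,n_{a_\ell}\}$ places $f$ in $E(m,j,s,k)$, and intersecting over $j,s,k$ gives the desired inclusion.

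The argument has no serious obstacle; the only mild technical point is continuity of $\phi_n$, which is a one-line uniform-continuity statement since we are composing the fixed entire function $f$ with the continuous family of translations $z\mapsto z+\la_n a$ on the compact disk $\{|z|\le k\}$. It is worth noting that properties 1)--3) of $(\la_n)$ play no role in this lemma; they will be needed only in later sections, where one shows that each union $\bigcup_m E(m,j,s,k)$ is open and dense in $\ch(\C)$ so that Baire's theorem can finally be applied to the intersection on the right-hand side.
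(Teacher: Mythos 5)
Your proof is correct and follows essentially the same route as the paper: the reverse inclusion is the easy density argument, and the forward inclusion rests on exactly the paper's key mechanism, namely that for fixed $n,j,k$ the set of $a\in S$ with $\sup_{|z|\le k}|f(z+\la_n a)-p_j(z)|<1/s$ is relatively open (by uniform continuity of $f$ on a compact set), so compactness of $S$ yields a finite subcover and hence a uniform bound $m$. The only difference is cosmetic: you apply the covering argument directly to the sets $E(m,j,s,k)$, whereas the paper routes it through the auxiliary sets $\mathcal{U}(p_j)$, $V(x,n,j)$ and $V_f(j_0,x_0,m_0)$ before translating back to the $E$'s.
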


\begin{lem}\label{lem2}
For every $m,\!j,\!s,\!k\!\in\!\N$ the set $E(m,\!j,\!s,\!k)$ is open in $(\ch(\C),\ct_u)$.
\end{lem}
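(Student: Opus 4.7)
\begin{Proof}[Proof plan for Lemma \ref{lem2}]
Fix $m,j,s,k\in\N$ and a function $f_0\in E(m,j,s,k)$; the plan is to exhibit an explicit $\ct_u$-neighborhood of $f_0$ contained in $E(m,j,s,k)$. First I would isolate the relevant compact set on which control of $f$ is needed. Since every $a\in S$ satisfies $|a|\le R_0$, the inequality $|z+\la_n a|\le k+R_0\max_{1\le n\le m}|\la_n|=:M$ holds for all $|z|\le k$, all $n\le m$, and all $a\in S$. Thus all values of $f(z+\la_n a)$ appearing in the definition of $E(m,j,s,k)$ depend only on the restriction of $f$ to the compact disk $\overline{D(0,M)}$, and the natural neighborhoods to use are of the form $V_\e(f_0):=\{f\in\ch(\C):\sup_{|w|\le M}|f(w)-f_0(w)|<\e\}$.

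Next, for each $a\in S$ pick $n(a)\in\{1,\ldots,m\}$ with
\[
\de(a):=\frac{1}{s}-\sup_{|z|\le k}|f_0(z+\la_{n(a)}a)-p_j(z)|>0.
\]
Since $f_0$ is uniformly continuous on $\overline{D(0,M)}$, the map $a\mapsto \sup_{|z|\le k}|f_0(z+\la_{n(a)}a)-p_j(z)|$ is continuous on $S$ (for the fixed index $n(a)$), so there is an open neighborhood $U(a)\subset S$ of $a$ on which this supremum stays below $\frac{1}{s}-\frac{\de(a)}{2}$. The key compactness argument now kicks in: the sector $S$ is compact, so finitely many such neighborhoods $U(a_1),\ldots,U(a_N)$ cover $S$, with corresponding indices $n_1,\ldots,n_N\in\{1,\ldots,m\}$ and margins $\de(a_1),\ldots,\de(a_N)>0$.

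Finally I would set $\e:=\tfrac{1}{2}\min_{1\le\el\le N}\de(a_\el)>0$ and verify that $V_\e(f_0)\subset E(m,j,s,k)$. Indeed, given $f\in V_\e(f_0)$ and any $a\in S$, pick $\el$ with $a\in U(a_\el)$; then for every $|z|\le k$ one has $|z+\la_{n_\el}a|\le M$, so
\[
\sup_{|z|\le k}|f(z+\la_{n_\el}a)-p_j(z)|\le \sup_{|w|\le M}|f(w)-f_0(w)|+\sup_{|z|\le k}|f_0(z+\la_{n_\el}a)-p_j(z)|<\e+\frac{1}{s}-\frac{\de(a_\el)}{2}\le\frac{1}{s},
\]
so $f\in E(m,j,s,k)$. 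There is no real obstacle here: the argument is a standard ``uniform continuity plus compactness of the parameter space'' sweep. The only point that requires any care is noticing that the relevant compact set is $\overline{D(0,M)}$ (not just $\overline{D(0,k)}$) and using the compactness of the closed sector $S$ to extract a finite subcover so that a single $\e>0$ works simultaneously for all $a\in S$.
\end{Proof}
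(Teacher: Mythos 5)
Your proof is correct: the covering of the compact sector $S$ by finitely many neighborhoods $U(a_\el)$ (using uniform continuity of $f_0$ on the disk $\overline{D(0,M)}$ with $M=k+R_0\max_{1\le n\le m}|\la_n|$), followed by the choice $\e=\tfrac12\min_\el\de(a_\el)$, gives a genuine $\ct_u$-neighborhood $V_\e(f_0)\subset E(m,j,s,k)$, and all the inequalities check out. The paper omits the proof of Lemma \ref{lem2}, referring to Lemma 9 of \cite{6}, and your argument is precisely the standard ``compactness of the parameter set plus uniform continuity'' reasoning that this reference relies on, so it fills the omitted step in essentially the intended way.
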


\begin{lem}\label{lem3}
For every $j,\!s,\!k\in\N$ the set $\bigcup\limits^\infty_{m=1}E(m,j,s,k)$ is dense in $(\ch(\C),\ct_u)$.
\end{lem}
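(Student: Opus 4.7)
The plan is to prove Lemma 3.3 via a direct Runge-type construction, with the combinatorial work concentrated in a carefully chosen two-dimensional partition of the sector $S$. Fix $g\in\ch(\C)$ and a basic $\ct_u$-neighborhood of $g$ determined by a compact $K=\overline{D}(0,R)$ and $\e>0$; the task is to produce $f\in\bigcup_m E(m,j,s,k)$ with $\sup_K|f-g|<\e$. Using uniform continuity of $p_j$ on $\overline{D}(0,k+1)$, first fix $\eta\in(0,1)$ such that $|z|\le k$ and $|u|\le\eta$ imply $|p_j(z+u)-p_j(z)|<1/(2s)$.

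The scheme I would follow is to choose $N_0$ large, partition $S$ into finitely many closed cells $P_1,\ldots,P_L$, and assign to each $P_\ell$ an index $n_\ell\in[N_0,m]$ and a representative $a_\ell\in P_\ell$ such that: \textbf{(i)} $|\la_{n_\ell}|\cdot\mathrm{diam}(P_\ell)\le\eta$; \textbf{(ii)} the ``fat arcs'' $\Ga_\ell:=\{w\in\C:\mathrm{dist}(w,\la_{n_\ell}P_\ell)\le k\}$ are pairwise disjoint and each disjoint from $K$; \textbf{(iii)} $L_0:=K\cup\bigcup_\ell\Ga_\ell$ has connected complement. Granted such a partition, define $h:L_0\to\C$ by $h|_K:=g$ and $h|_{\Ga_\ell}(w):=p_j(w-\la_{n_\ell}a_\ell)$; this is holomorphic on a neighborhood of $L_0$, and Runge's theorem supplies an entire $f$ with $\sup_{L_0}|f-h|<\min(\e,1/(2s))$. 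Then $\sup_K|f-g|<\e$, and for any $a\in S$, picking $\ell$ with $a\in P_\ell$ and writing $w:=z+\la_{n_\ell}a$, property (i) ensures $w\in\Ga_\ell$ for every $|z|\le k$, so the triangle inequality
\[
|f(z+\la_{n_\ell}a)-p_j(z)|\le|f(w)-h(w)|+|p_j(z+\la_{n_\ell}(a-a_\ell))-p_j(z)|<\tfrac{1}{2s}+\tfrac{1}{2s}=\tfrac{1}{s}
\]
places $f$ into $E(m,j,s,k)$ with $m:=\max_\ell n_\ell$.

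The main obstacle, absorbing almost all of the technical work, is arranging (i)--(iii) simultaneously, and this is where hypotheses (1)--(3) on $(\la_n)$ enter essentially. A single-index partition cannot succeed: (i) forces cells of diameter $\eta/|\la_n|$, while disjointness of the fat arcs demands separation of order $2k/|\la_n|$ in $S$, leaving uncovered gaps as soon as $\eta<2k$. The remedy is to interlock cells of different indices, so that gaps left by $n$-cells are filled by $(n{+}1)$-, $(n{+}2)$-,\ldots cells, whose fat arcs in $\C$ sit in radially separated annular bands. Hypothesis (1), $|\la_{n+1}|-|\la_n|\to+\infty$, is precisely what makes successive bands radially separated by more than $2k$ for large $n$; hypothesis (2), $\la_{n+1}/\la_n\to 1$, is what forces the dilated sectors $\la_n\cdot S$ and $\la_{n+1}\cdot S$ to overlap sufficiently in angle for the interlocking to cover all of $S$; and hypothesis (3), $\liminf n(|\la_{n+1}/\la_n|-1)>0$, gives a quantitative lower bound on successive ratios that controls the number of indices (hence cells) required, keeping the construction finite and thus yielding a usable $m$. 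Runge admissibility of $L_0$ then follows because each $\Ga_\ell$ is a small topological disk well-separated from $K$ and from the other $\Ga_{\ell'}$.
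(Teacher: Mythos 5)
Your outer shell is exactly the paper's strategy: a fine two--dimensional partition of $S$, one sequence term per cell, a compact set consisting of $K$ together with pairwise disjoint translated ``islands'', a function equal to $g$ on $K$ and to a translate of $p_j$ on each island, Runge/Mergelyan approximation, and the $\tfrac{1}{2s}+\tfrac{1}{2s}$ triangle inequality. That reduction is correct and is not where the difficulty lies.

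The genuine gap is that your conditions (i)--(iii) are postulated, not proved, and their simultaneous realization with indices $n_\ell$ bounded by a single finite $m$ is precisely the content of the lemma. In the paper this occupies the whole of Sections 3 and 4: a five--step construction of the partition $P_m$ (partitions of order $0$, $1$, $2$, then stacking, with the ``almost disjoint'' bookkeeping), and then a case--by--case proof (Lemmas \ref{diskprop1}--\ref{diskprop7}) that the disks $B_w=B+w\la(w)$ are pairwise disjoint. Your heuristic paragraph does not substitute for this, and it also misassigns the roles of the hypotheses in a way that hides the hardest case. The interlocking you describe forces points of larger modulus to carry \emph{earlier} terms of the sequence than points of smaller modulus (within one order--$2$ partition the density resets while the radius increases), so ``radially separated annular bands'' for successive indices is false as stated; the disjointness in this out--of--order case is exactly Lemma \ref{diskprop6}, and it is proved quantitatively from hypothesis (3) via properties (2.5)--(2.8) and the choice $c_4=r_0c_3/2$ --- not, as you suggest, with hypothesis (3) serving merely to keep the number of cells finite. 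Likewise, finiteness of the construction (your bounded $m$) is not automatic: one must show the radial increments $l(P_2^{r,m})$ are bounded below (the paper's inequality (\ref{incrinfty}), using (2.8)) so that finitely many stacked partitions cover $[r_0,R_0]$, and one must verify the angular covering bound (inequality (2.1)) so that each arc is exhausted within $m_0$ steps. Without constructing the partition and proving these estimates, the proof is an outline of the easy part only.
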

The proof of Lemma \ref{lem2} is similar to that in Lemma 9 of \cite{6} and it is omitted. Coming now to Lemma
\ref{lem1}, the inclusion
$$
\bigcap^\infty_{j=1}\bigcap^\infty_{s=1}\bigcap^\infty_{k=1} \bigcup^\infty_{m=1}E(m,j,s,k) \subset
\bigcap_{a\in S}HC(\{ T_{\la_na}\} )
$$
is easy to establish, therefore it is left as an exercise to the interested reader. At this point we would like
to stress that Lemmas \ref{lem2}, \ref{lem3} along with the above inclusion immediately imply that the set
$\bigcap_{a\in S}HC(T_{\la_na})$ is residual, hence non-empty. However, one can get a more precise information
concerning the topological structure of the set $\bigcap_{a\in S}HC(T_{\la_na})$ which is actually $G_{\delta}$.
The proof of the last fact, which is not so obvious, is postponed till the last section, section 6. We now move
on to Lemma \ref{lem3}. This lemma is the heart of our argument and its proof is long and difficult. In order to
present it in a more digestive form, we give below a very rough sketch of the main ideas involved in the proof.
As the reader may notice, our strategy shares certain similarities with the proof of Lemma 10 from \cite{6}. On
the other hand we will indicate the points at which our argument differentiates from that in \cite{6}.

We start by fixing $j_1, s_1,k_1\in \mathbb{N}$. We need to prove that
$\bigcup\limits^\infty_{m=1}E(m,j_1,s_1,k_1)$ is dense in $(\ch(\C),\ct_u)$. For simplicity we write
$p_{j_1}=p$. Consider $g\in\ch(\C)$, a compact set $C\subset \mathbb{C}$ and $\epsilon_0>0$. We seek $f\in
\ch(\C)$ and a positive integer $m_1$ such that
\begin{eqnarray}
f\in E(m_1,j_1,s_1,k_1)  \label{eq1}
\end{eqnarray}

and

\begin{eqnarray}
\sup_{z\in C}|f(z)-g(z)|<\epsilon_0.  \label{eq2}
\end{eqnarray}

\emph{ What is done in Lemma 10 from  \cite{6}}: The authors in \cite{6},\cite{7} deal with the unit circle
instead of the sector $S$. Then they define a suitable one dimensional partition of the unit circle $\{ a_1,
a_2, \ldots ,a_n\}$ and choose appropriate terms $\lambda_{\mu_1}, \ldots ,\lambda_{\mu_n}$ of the sequence
$(\lambda_n)$ so that the discs
$$ B, \,\,\, B_i:=B+a_i\lambda_{\mu_i}, \,\,\, i=1,\ldots ,n$$
are pairwise disjoint, where $B$ is a closed disc centered at zero with sufficiently large radius $R$ and $R$
only depends on fixed initial conditions of the problem. Then setting
$$L:=B\bigcup  \left( \cup_{i=1}^nB_i \right) ,$$
defining a suitable holomorphic function on $L$ and using Runge's theorem they conclude the existence of a
polynomial which satisfies a finite number of the desired inequalities. Taking advantage of the fact that the
partition $\{ a_1, a_2, \ldots ,a_n\}$ is very thin, i.e. $a_i$ is close enough to $a_{i+1}$ for $i=1,\ldots
,n-1$ they are able to check the validity of the remaining inequalities for all the points of the unit circle.

\emph{ What we do}: Our argument boils down in finding a desired two dimensional partition $\{ a_1, \ldots a_n
\}$ of the above sector $S$. The construction of this partition consists of five steps and is presented in
section 3. Then we adjust a specific term $\lambda(a_j)$, $j=1,\ldots ,n$ of the sequence $(\lambda_n)$ to each
one of the above numbers $a_j$ of the partition and we define the discs
$$ B, \,\,\, B_j:=B+a_j\lambda(a_j), \,\,\, j=1,\ldots ,n$$ so that they are pairwise disjoint. Once this is
established we more or less follow the procedure mentioned above in order to prove (\ref{eq1}), (\ref{eq2}).

\subsection{Good properties of the sequence $(\lambda_n)$} \label{good}
Let $(\la_n)$ be a sequence of non-zero complex numbers satisfying the following:
\begin{enumerate}
\item[1)] $|\la_{n+1}|-|\la_n|\ra+\infty$ as $n\ra+\infty$
\item[2)] $\dfrac{\la_{n+1}}{\la_n}\ra1$ as $n\ra+\infty$
\item[3)] $\underset{n\ra+\infty}{\dis\lim\inf}\Big(n\Big(\Big|\dfrac{\la_{n+1}}{\la_n}\Big|
    -1\Big)\Big)>0$.
\end{enumerate}

Let $r_0,R_0,\thi_0,\thi_T$ be positive numbers such that $0<r_0<1<R_0<+\infty$, $0\le\thi_0<\thi_T\leq 1$. Let
also $c_0$, $c_1$, $c_2$, $c_3$, $c_4$ be positive numbers such that $c_0>2$, $c_1>2$, $0<c_2<1$, $c_3>0$,
$2c_3<\liminf_n (n(\frac{|\la_{n+1}|}{|\la_n|} -1))$ and $c_4:=\frac{r_0c_3}{2}$. Finally, define
$$m_0:=\left[\frac{R_0c_1}{r_0}\right]+1, $$
$$k_0:=\left[ \frac{2c_0}{c_2} \right] +1 ,$$ where the symbol $[x]$ stands for the integer part of a real
number $x\in \mathbb{R}$. Using elementary calculus and the above properties of $(\la_n)$ it is easy to see that
there exists a fixed natural number $n_0$ such that for every $n\in\N$, $n\ge n_0$ all the following 8
inequalities hold:
\begin{enumerate}
\item[(2.1)] $|\la_n|\cdot\ssum^{m_0-1}_{k=0}\dfrac{1}{|\la_{n+k}|}>\dfrac{R_0}{r_0}c_1$
\item[(2.2)] $|\la_{n+1}|-|\la_n|>4\dfrac{c_0}{r_0}$
\item[(2.3)]  $|\la_n|>\dfrac{4c_0}{r_0}$
\item[(2.4)]  $|\la_n|\cdot\ssum^{k_0}_{i=1}\dfrac{1}{|\la_{n+im_0-1}|}>\dfrac{2c_0}{c_2}$
\item[(2.5)] $n\bigg(\bigg|\dfrac{\la_{n+1}}{\la_n}\bigg|-1\bigg)>2c_3$
\item[(2.6)] $\dfrac{n}{n+m_0k_0}>\dfrac{1}{2}$
\item[(2.7)] $\dfrac{n}{|\la_n|}\cdot2c_0<c_4$
\item[(2.8)] $\dfrac{n}{|\la_n|}<\dfrac{c_4}{2c_2k_0}$.
\end{enumerate}
Of course, inequality (2.6) has nothing to do with the sequence $(\la_n)$; however, we chose to isolate it here
since it will be needed later in the main construction of the partition and in the construction of the disks. At
a first glance, it may look strange why the above properties play important role. It turns out that these
properties fully characterize the sequences $( \la_n)$ that appear in Theorem \ref{thm1}, see Lemma
\ref{sec7lem3} in section 7.

\section{Construction of the partition of the sector $S$}\label{sec3}
For the rest of this section we fix a sequence $(\la_n)$ of non-zero complex numbers satisfying the following:
\begin{enumerate}
\item[1)] $|\la_{n+1}|-|\la_n|\ra+\infty$ as $n\ra+\infty$
\item[2)] $\dfrac{\la_{n+1}}{\la_n}\ra1$ as $n\ra+\infty$
\item[3)] $\underset{n\ra+\infty}{\dis\lim\inf}\Big(n\Big(\Big|\dfrac{\la_{n+1}}{\la_n}\Big|
    -1\Big)\Big)>0$.
\end{enumerate}
We also fix the numbers $r_0,R_0,\thi_0,\thi_T$,$c_0$, $c_1$, $c_2$, $c_3$, $c_4$, $m_0$, $k_0$, which are
defined in subsection \ref{good}.

\subsection{Step 1. Partitions of the interval $[\theta_0 ,\theta_T]$}
In this step we succeed the elementary structure of our construction. All the following steps are based in this
first one. For every positive integer $m$ we shall construct a corresponding partition $\Delta_m$. So, let $m\in
\mathbb{N}$ be fixed. We have (see subsection \ref{good})
$$m_0:=\left[\frac{R_0c_1}{r_0}\right]+1.$$
Recall that the symbol $[x]$ stands for the integer part of the real number $x$. For every $j=0,1,\ldots ,m_0-1$
choose real numbers $\theta_j^{(m)}$, $\theta_{j+1}^{(m)}$ such that
$$\theta_j^{(m)}, \,\,\theta_{j+1}^{(m)}\in [\theta_0,\theta_T)$$
and
$$
\frac{c_0}{2R_0c_1|\lambda_{m+j}|}<\theta_{j+1}^{(m)}-\theta_j^{(m)}<\frac{c_0}{R_0c_1|\lambda_{m+j}|} \quad
\quad (I)
$$
where $\theta_0^{(m)}=\theta_0$. We consider three cases.

{\bf Case 1.} Assume that $$\frac{c_0}{2R_0c_1|\lambda_{m}|}\geq \theta_T-\theta_0 .$$ Then we define
$$\Delta_m=\{ \theta_0^{(m)} \}.$$

{\bf Case 2.} Assume that
$$
\frac{c_0}{2R_0c_1}\sum_{j=0}^{j'}\frac{1}{|\lambda_{m+j}|}\geq \theta_T-\theta_0
$$
for some $j'\in \{ 1, \ldots , m_0 \}$. Consider the lowest number $j_0 \in \{ 1,\ldots ,m_0\}$ so that the
previous inequality holds. Then we define our partition to be
$$\Delta_m=\{ \theta_j^{(m)}: j=0,\ldots ,j_0-1 \} .$$

{\bf Case 3.} Assume that none of the Cases 1, 2 hold. Then by inequality (I) we get that
$$\theta_0=\theta_0^{(m)}<\theta_1^{(m)}<\cdots <\theta_{m_0}^{(m)}<\theta_T .$$
Setting $\sigma_m:=\theta_{m_0}^{(m)}-\theta_0$ we have $0<\sigma_m<\theta_T-\theta_0$. For every positive
integer $k$ with $k\geq m_0+1$ there exist unique $\nu \in \mathbb{N}$ and unique $j\in \{ 0, 1,\ldots ,m_0-1\}$
such that $k=\nu m_0+j$. For every $k$ as before, set
$$\theta_k^{(m)}=\theta_{\nu m_0+j}^{(m)}:=\theta_j^{(m)}+\nu \sigma_m .$$
It is obvious that the sequence $(\theta_k^{(m)})_{k=1}^{\infty}$ is strictly increasing and tends to $+\infty$.
Without loss of generality we may assume that
$$
\theta_k^{(m)}\neq \theta_T \,\,\,\textrm{for every}\,\,\, k\geq m_0+1.
$$
Otherwise, if $\theta_{k'}^{(m)}=\theta_T$ for some $k'\geq m_0+1$, and since $(\theta_k^{(m)})_{k=1}^{\infty}$
is strictly increasing, $k'$ is the only integer having this property. Then we subtract a sufficiently small
positive number $\epsilon >0$ from $\theta_{k'}^{(m)}$ so that replacing $\theta_{k'}^{(m)}$ by
$\theta_{k'}^{(m)}-\epsilon$ in the sequence $(\theta_k^{(m)})_{k=1}^{\infty}$, inequality (I) still holds.

Finally we define $\nu_m$ to be the biggest integer $\nu $ with the properties $\nu\geq m_0+1$ and
$\theta_{\nu}^{(m)}<\theta_T$. We are ready to describe the desired partition $\Delta_m$.
$$\Delta_m:=\{ \theta_0^{(m)}, \theta_1^{(m)}, \ldots ,\theta_{ \nu_{m}}^{(m)} \} .$$

The partitions, $\Delta_1, \Delta_2,\ldots $ constructed above can be chosen so that the following important
property holds:

$$ \textrm{ "almost disjoint property" if} \,\,\, m_1\neq m_2 \,\,\, \textrm{then} \,\,\,
\Delta_{m_1}\cap \Delta_{m_2}=\{ \theta_0 \} .$$

The "almost disjoint property" turns out to be very important in the rest of the construction.

\subsection{Step 2. Partitions of the arc $\phi_r([\theta_0 ,\theta_T])$}
Consider the function $\phi :[\theta_0 ,\theta_T ]\times (0 ,+\infty )\to \mathbb{C}$ given by
$$\phi (t,r):=re^{2\pi it},\,\,\, (t,r)\in [\theta_0 ,\theta_T ]\times (0 ,+\infty ) $$ and for every $r> 0$ we
define the corresponding curve $\phi_r:[\theta_0 ,\theta_T ] \to \mathbb{C}$ by
$$\phi_r(t):=\phi (t,r), \,\,\, t\in [\theta_0 ,\theta_T ].$$
For any given positive integer $m$, $\phi_r(\Delta_m)$ is a partition of the arc $\phi_r([\theta_0 ,\theta_T])$,
where $\Delta_m$ is the partition of the interval $[\theta_0 ,\theta_T]$ constructed in Step 1. For every $r>0$,
$m\in \mathbb{N}$ define
$$P_0^{r,m}:=\phi_r(\Delta_m)$$
which we call partition of the arc $\phi_r([\theta_0 ,\theta_T])$ with height $r$, density $m$ and order $0$.

\subsection{Step 3. Partitions of order 1 for a sector of opening $\pi /2$ }
In this step we elaborate on the construction of Step 2 and we aim to define a suitable partition for a sector
of opening $\pi /2$. For reasons that will become apparent later on, this partition is called a partition of
order 1. To explain why we deal with such a sector, notice that $\theta_T-\theta_0=1/4$. Therefore the set $\phi
([\theta_0 ,\theta_T] \times (0,+\infty ))$ is nothing else but a sector of opening $\pi /2$, where $\phi$ is
defined in Section 2.

We continue with the construction of the desired partition. Recall that
$$k_0:=\left[ \frac{2c_0}{c_2} \right] +1 .$$ The fixed positive constant $c_2$ appears in
Section 2. For every $r>0$, $m\in \mathbb{N}$ and $k\in \{ 0, 1,\ldots , k_0-1\}$ define the positive numbers
$$\mu (r,m,k):=r+\sum_{j=1}^k\frac{c_2}{|\lambda_{m+jm_0-1}|}, \,\,\, k\geq 1$$
$$\mu (r,m,0):=r.$$
Roughly speaking, our new partition will be obtained as a suitable finite union of partitions of order $0$ with
different heights and densities. More precisely for every $m\in \mathbb{N}$, $r>0$ define the set
$$P_1^{r,m}:=\bigcup_{k=0}^{k_0-1}P_0^{\mu (r,m,k), m+km_0} ,$$
where $P_0^{\mu (r,m,k), m+km_0}$ is the partition of the arc $\phi_{\mu (r,m,k)}([\theta_0 ,\theta_T])$ with
height $\mu (r,m,k)$, density $ m+km_0$ and order $0$. We call the set $P_1^{r,m}$ a partition with basis $r$,
density $m$ and order $1$. Observe that in this way we obtain the first partition in two dimensions, that is a
partition of a sector. We will built our next two dimensional partition by stacking several partitions of order
$1$.
\subsection{Step 4. Stacking several partitions of order 1: Partitions of order 2}
The positive number
$$c_4:=\frac{r_0c_3}{2}$$ is fixed from the beginning of this section.
For every positive integer $m$ and every $r>0$ we define the positive number
$$\mu_1(m):=\sum_{j=1}^{k_0}\frac{c_2}{|\lambda_{m+jm_0-1}|}$$ and observe that by Step $3$ we have
$$\mu_1(m)=\mu (r,m,k_0-1)+\frac{c_2}{|\lambda_{m+k_0m_0-1}|}-r, $$ for every $r>0$.
Let $r>0$ and $m\in \mathbb{N}$. We shall describe the new partition corresponding to $r,m$.

{\bf Case 1.} Assume that $$ \mu_1(m)\geq \frac{c_4}{m}.$$ Then we stop the process and define
$$P_{2}^{r,m}:=P_{1}^{r,m},$$
where $P_{1}^{r,m}$ is the partition defined in Step $3$.

{\bf Case 2.} Assume that $$ \mu_1(m)< \frac{c_4}{m}.$$ It trivially follows that $|w|<r+\frac{c_4}{m}$ for
every $w\in P_1^{r,m}$. Consider now the following partitions of order $1$
$$P_1^{r+\nu \mu_1(m),m}, \,\,\, \textrm{for every} \,\,\, \nu =0,1,2,\ldots .$$
Then for every $w\in P_1^{r+\nu \mu_1(m),m}$, $\nu =0,1,2,\ldots $ we get
\begin{equation} \label{St4ineq3}
|w|\geq r+\nu\mu_1(m).
\end{equation}
Let us consider the following subset of the positive integers
$$\mathcal{A}:= \left\{ N\in \mathbb{N}|\,\, |w|<r+\frac{c_4}{m}, \,\,\,\, \forall w\in \bigcup_{\nu =0}^{N}P_1^{r+\nu
\mu_1(m),m} \right\} .$$ Since $r+\nu \mu_1(m)\to +\infty$ as $\nu\to +\infty$, $|w|<r+\mu_1(m)$ for every $w\in
P_1^{r,m}$ and in view of (\ref{St4ineq3}) we conclude that the set $\mathcal{A}$ is non-empty and finite. Take
the biggest integer in this set, i.e.,
$$ \nu_0^{r,m}:=\max\mathcal{A} .$$ This integer describes the stopping time of the process. Then define the set
$$P_2^{r,m}:= \bigcup_{\nu =0}^{\nu_0^{r,m}}P_1^{r+\nu\mu_1(m),m} .$$

Throughout the rest of the paper we call the set $P_2^{r,m}$ a partition with basis $r$, density $m$ and order
$2$.
\subsection{Step 5. The final partition}
In this step we complete the construction of the desired partition of $S$. For every positive integer $m$ with
$m\geq n_0$ and every $r>0$ define the positive numbers
$$M^{r,m}:=\max \{ |w|: w\in P_2^{r,m} \}, $$ where $P_2^{r,m}$ is the partition with basis $r$, density $m$ and order
$2$ and is defined in Step $4$. We call the number $M^{r,m}$ height of the partition $P_2^{r,m}$ and define the
number
$$ l(P_2^{r,m}):= M^{r,m}-r,$$ which we call length of the partition $P_2^{r,m}$. We postpone the proof that length of
the partition $P_2^{r,m}$ is positive till the next subsection. Let us now consider the sequence
$(r_{\nu}^{(m)})$ of positive numbers, defined recursively as follows:
$$ r_0^{(m)}:=r_0,$$
$$r_1^{(m)}-r_0^{(m)}:=l(P_2^{r,m}),$$
$$r_{\nu +1}^{(m)}-r_{\nu }^{(m)}:=l(P_2^{ r_{\nu }^{(m)} ,m+\nu k_0m_0})$$
for every $\nu =1,2,\ldots $. We will prove in the next subsection that $r_{\nu }^{(m)}\to +\infty$ as $\nu\to
+\infty$ for every $m\geq n_0$. Therefore for every $m\geq n_0$ there exists a positive integer $\nu (m)$ such
that $r_{\nu (m)}^{(m)}\geq R_0$. Let $\nu_1^{(m)}$ be the smallest positive integer with the previous property.
Now we define
$$P_m:= S\bigcap \left(\bigcup_{\nu =0}^{\nu_1^{(m)}}P_2^{ r_{\nu }^{(m)} ,m+\nu k_0m_0} \right) ,$$
for every positive integer $m$ with $m\geq n_0$. For every $m$ as before the set $P_m$ defines a partition of
the sector $S$ and throughout the rest of this work $P_m$ will be called the partition of $S$ with order $m$.

\subsection{Properties of the partitions}

In the next lemma we transfer the "almost disjoint property" of the partitions of interval $[\theta_0,
\theta_T)$ to an arc.
\begin{lem} \label{propertiesl0}
Consider the partitions $P_0^{r,m_1}$, $P_0^{r,m_2}$ for given $r>0$ and $m_1,m_2\in \mathbb{N}$. The following
property holds:
$$ \textrm{ "almost disjoint property" if} \,\,\, m_1\neq m_2 \,\,\, \textrm{then} \,\,\,
P_0^{r,m_1}\cap P_0^{r,m_2}=\{ re^{2\pi i\theta_0} \} .$$
\end{lem}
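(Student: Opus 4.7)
The plan is to reduce the claim to the already-stated ``almost disjoint property'' on the level of the parameter interval $[\theta_0,\theta_T]$. By definition,
$$P_0^{r,m_i} = \phi_r(\Delta_{m_i}), \qquad i=1,2,$$
so the question is about the image under $\phi_r$ of the two parameter partitions. The crucial structural fact to exploit is that $\theta_T-\theta_0 = 1/4 < 1$, which means that the map $t \mapsto re^{2\pi i t}$ is injective on $[\theta_0,\theta_T]$ for every $r>0$.

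First I would record the injectivity of $\phi_r$ on $[\theta_0,\theta_T]$: if $\phi_r(t_1)=\phi_r(t_2)$ with $t_1,t_2\in[\theta_0,\theta_T]$, then $e^{2\pi i(t_1-t_2)}=1$ so $t_1-t_2\in\Z$, and since $|t_1-t_2|\le \theta_T-\theta_0=1/4<1$ this forces $t_1=t_2$. Consequently, for any subsets $A,B\subseteq[\theta_0,\theta_T]$ we have $\phi_r(A)\cap \phi_r(B)=\phi_r(A\cap B)$. Applied to $A=\Delta_{m_1}$, $B=\Delta_{m_2}$, this yields
$$P_0^{r,m_1}\cap P_0^{r,m_2} \;=\; \phi_r(\Delta_{m_1})\cap\phi_r(\Delta_{m_2}) \;=\; \phi_r\bigl(\Delta_{m_1}\cap\Delta_{m_2}\bigr).$$

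Next, I would invoke the ``almost disjoint property'' of the partitions $\Delta_m$, as asserted at the end of Step~1: when $m_1\neq m_2$, the construction produces $\Delta_{m_1}\cap\Delta_{m_2}=\{\theta_0\}$. Combining with the previous display gives
$$P_0^{r,m_1}\cap P_0^{r,m_2} \;=\; \phi_r(\{\theta_0\}) \;=\; \{re^{2\pi i\theta_0}\},$$
which is the desired identity.

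The only conceptual obstacle is that the ``almost disjoint property'' of $\{\Delta_m\}_{m\ge 1}$ is not automatic from inequality $(I)$ alone; the author states that the $\theta_j^{(m)}$ \emph{can be chosen} with this extra property. To make the lemma fully rigorous I would, in the background, verify that one has enough freedom in picking each $\theta_{j+1}^{(m)}$ inside the open interval prescribed by $(I)$ to avoid, at each step, the at most countably many values already used in previous partitions $\Delta_{m'}$ with $m'\ne m$ (and also to avoid any recurrences produced by the periodicity $\theta_k^{(m)}=\theta_j^{(m)}+\nu\sigma_m$), while still keeping $\theta_0^{(m)}=\theta_0$ common to all $m$. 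This choice of measure-zero exclusions is the step that demands the most care; everything else in the proof is just the injectivity argument for $\phi_r$ together with a direct appeal to Step~1.
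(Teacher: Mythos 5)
Your argument is correct and is essentially the paper's own proof: the paper disposes of the lemma in one line by noting that it follows from the almost disjoint property of the partitions $\Delta_m$ in Step 1 together with the definition $P_0^{r,m}=\phi_r(\Delta_m)$ from Step 2, and your injectivity observation (using $\theta_T-\theta_0=1/4$) merely makes explicit why $\phi_r(\Delta_{m_1})\cap\phi_r(\Delta_{m_2})=\phi_r(\Delta_{m_1}\cap\Delta_{m_2})$. Your closing caveat that the ``can be chosen'' assertion in Step 1 itself deserves verification is reasonable, but that concerns the construction in Step 1 rather than this lemma, and the paper likewise takes it as given.
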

\begin{proof}
The result immediately follows by the corresponding property of the partitions of the interval $[\theta_0,
\theta_T)$ and the definition of the partition $P_0^{r,m}$, see Steps 1,2.
\end{proof}

\begin{lem} \label{propertiesl1}
Consider the partition $P_1^{r,m}:=\cup_{k=0}^{k_0-1}P_0^{\mu (r,m,k), m+km_0}$ defined in Step 3, for fixed
$r>0$ and $m\in \mathbb{N}$. Take $k_1,k_2\in \{ 0, \ldots , k_0-1\}$ with $k_1< k_2$.  Then we have
$$
\mu (r,m,k_1)< \mu (r,m,k_2),
$$
where $\mu (r,m,k_1), \mu (r,m,k_2)$ are the heights of the partitions $P_0^{\mu (r,m,k_1), m+k_1m_0}$,
$P_0^{\mu (r,m,k_2), m+k_2m_0}$ respectively. In particular
$$
P_0^{\mu (r,m,k_1), m+k_1m_0}\cap P_0^{\mu (r,m,k_2), m+k_2m_0}=\emptyset .
$$
\end{lem}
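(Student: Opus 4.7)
The plan is to verify both assertions directly from the definitions given in Step 3, as the lemma is essentially a bookkeeping statement about moduli of points on nested arcs.

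For the strict inequality, I would simply unpack the definition of $\mu$ from Step 3:
\[
\mu(r,m,k)=r+\sum_{j=1}^{k}\frac{c_2}{|\lambda_{m+jm_0-1}|}\quad(k\ge 1),\qquad \mu(r,m,0)=r.
\]
Then for $0\le k_1<k_2\le k_0-1$ one computes
\[
\mu(r,m,k_2)-\mu(r,m,k_1)=\sum_{j=k_1+1}^{k_2}\frac{c_2}{|\lambda_{m+jm_0-1}|}>0,
\]
where positivity uses only that $c_2>0$ (fixed in Subsection \ref{good}) and that every $\lambda_n$ is non-zero by hypothesis. The case $k_1=0$ is handled by the same formula, interpreting the sum over the empty range as $0$ so that $\mu(r,m,0)=r$, consistent with the separate definition.

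For the disjointness, I would appeal to Step 2, where $P_0^{r',m'}$ is defined as $\phi_{r'}(\Delta_{m'})$; since $\phi_{r'}(t)=r'e^{2\pi it}$, every element $w\in P_0^{r',m'}$ has $|w|=r'$. Applied to $(r',m')=(\mu(r,m,k_i),\,m+k_im_0)$ for $i=1,2$, this means every element of $P_0^{\mu(r,m,k_1),\,m+k_1m_0}$ has modulus $\mu(r,m,k_1)$, while every element of $P_0^{\mu(r,m,k_2),\,m+k_2m_0}$ has modulus $\mu(r,m,k_2)$. By the first part these two moduli are distinct, so the two sets cannot share a point, and their intersection is empty.

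There is no real obstacle in this proof; both statements are immediate from the construction. The only minor point worth noting is that one does \emph{not} need the almost-disjoint property from Lemma \ref{propertiesl0} here, because the two partitions live on arcs of different radii rather than on the same arc; modulus alone already separates them.
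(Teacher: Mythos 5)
Your proof is correct and follows the same route as the paper: the paper's proof simply notes that the strict inequality $\mu(r,m,k_1)<\mu(r,m,k_2)$ is immediate from the definition of $\mu$ in Step 3, with disjointness then following because every point of a partition of order $0$ with height $r'$ has modulus $r'$. You have merely written out explicitly the telescoping sum and the modulus observation that the paper leaves implicit.
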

\begin{proof}
By the definition of $\mu (r,m,k)$, see Step 3, it follows that $\mu (r,m,k_1)< \mu (r,m,k_2)$.
\end{proof}

\begin{lem} \label{propertiesl2}
Consider the partition $P_2^{r,m}:= \cup_{\nu =0}^{\nu_0^{r,m}}P_1^{r+\nu\mu_1(m),m}$ defined in Step 4, for
fixed $r>0$ and $m\in \mathbb{N}$. Take $\nu_1, \nu_2\in \{ 0,\ldots ,\nu_0^{r,m} \}$ with $\nu_1< \nu_2$. Then
we have
$$
\max\left\{ |w|: w\in P_1^{r+\nu_1\mu_1(m),m} \right\} <\min \left\{ |w|: w\in P_1^{r+\nu_2\mu_1(m),m} \right\}
.
$$
In particular
$$P_1^{r+\nu_1\mu_1(m),m}\cap P_1^{r+\nu_2\mu_1(m),m}=\emptyset .$$
\end{lem}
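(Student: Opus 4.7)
The plan is to prove Lemma \ref{propertiesl2} by a direct unwinding of the definitions from Steps 3 and 4, reducing it to a transparent inequality between sums of the quantities $c_2/|\lambda_{m+jm_0-1}|$. The key observation is that every element $w$ of a partition of order $0$ has the property that $|w|$ equals the height, since $\phi_\mu(t)=\mu e^{2\pi it}$ has constant modulus $\mu$.

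First I would compute the minimum and maximum modulus appearing in a partition of order $1$. By the definition $P_1^{r',m}=\bigcup_{k=0}^{k_0-1}P_0^{\mu(r',m,k),\,m+km_0}$ together with Lemma \ref{propertiesl1}, the heights $\mu(r',m,k)$ are strictly increasing in $k$. Hence
\[
\min\{|w|:w\in P_1^{r',m}\}=\mu(r',m,0)=r',\qquad \max\{|w|:w\in P_1^{r',m}\}=\mu(r',m,k_0-1)=r'+\sum_{j=1}^{k_0-1}\frac{c_2}{|\la_{m+jm_0-1}|}.
\]
Applying this with $r'=r+\nu_1\mu_1(m)$ on the left and $r'=r+\nu_2\mu_1(m)$ on the right, the desired inequality becomes
\[
(r+\nu_1\mu_1(m))+\sum_{j=1}^{k_0-1}\frac{c_2}{|\la_{m+jm_0-1}|}\;<\;r+\nu_2\mu_1(m),
\]
or equivalently $(\nu_2-\nu_1)\,\mu_1(m)>\sum_{j=1}^{k_0-1}\dfrac{c_2}{|\la_{m+jm_0-1}|}$.

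Since $\nu_1<\nu_2$ are integers, $\nu_2-\nu_1\ge 1$, so it suffices to show $\mu_1(m)>\sum_{j=1}^{k_0-1}\dfrac{c_2}{|\la_{m+jm_0-1}|}$. But by definition in Step 4,
\[
\mu_1(m)=\sum_{j=1}^{k_0}\frac{c_2}{|\la_{m+jm_0-1}|}=\sum_{j=1}^{k_0-1}\frac{c_2}{|\la_{m+jm_0-1}|}+\frac{c_2}{|\la_{m+k_0m_0-1}|},
\]
and the extra term is strictly positive. This closes the first inequality of the lemma. The second assertion, $P_1^{r+\nu_1\mu_1(m),m}\cap P_1^{r+\nu_2\mu_1(m),m}=\emptyset$, is an immediate consequence: any common point would have the same modulus, contradicting the strict separation of the maximum from the minimum.

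There is no real obstacle here; the statement is essentially a bookkeeping check that the stacking procedure of Step 4 preserves a strict modular separation between consecutive partitions of order $1$. The only point that requires a line of verification is that the min and max of $|w|$ on $P_1^{r',m}$ are realised exactly at $k=0$ and $k=k_0-1$, which comes directly from Lemma \ref{propertiesl1} and the fact that $P_0^{\mu,M}\subset\{|w|=\mu\}$.
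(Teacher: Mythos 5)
Your proof is correct and follows essentially the same route as the paper: both arguments reduce to the observation that any partial sum $\sum_{j=1}^{k}c_2/|\la_{m+jm_0-1}|$ with $k\le k_0-1$ is strictly smaller than the full sum $\mu_1(m)$, combined with $\nu_1+1\le\nu_2$, so that every height occurring in $P_1^{r+\nu_1\mu_1(m),m}$ lies strictly below every height occurring in $P_1^{r+\nu_2\mu_1(m),m}$. The only cosmetic difference is that you first identify the extreme heights ($k=0$ and $k=k_0-1$) and compare those, whereas the paper compares arbitrary pairs of heights directly; the underlying inequality is identical.
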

\begin{proof}
Take any $k_1,k_2\in \{ 0, \ldots ,k_0-1\}$. We have
$$
\mu(r+\nu_1\mu_1(m),m,k_1)=r+\nu_1\mu_1(m)+\sum_{j=1}^{k_1}\frac{c_2}{|\lambda_{m+jm_0-1}|}<r+(\nu_1+1)\mu_1(m)
$$
$$
\leq r+\nu_2\mu_1(m)\leq r+\nu_2\mu_1(m)+\sum_{j=1}^{k_2}\frac{c_2}{|\lambda_{m+jm_0-1}|}=\mu
(r+\nu_2\mu_1(m),m,k_2),
$$
where in the case $k_1=0$ or $k_2=0$ the corresponding sum above disappears. The last implies that the height of
any partition of order $0$ used to build the partition $P_1^{r+\nu_1\mu_1(m),m}$ is strictly lower from the
height of every partition of order $0$ used to build the partition $P_1^{r+\nu_2\mu_1(m),m}$. The conclusion
follows.

\end{proof}

\begin{lem} \label{strincr}
Fix any positive integer $m$ with $m\geq n_0$. Then for every positive number $r$ the length of the partition
$P_2^{r,m}$, i.e. the number $l(P_2^{r,m})$ defined in Step 5, is positive and hence the sequence
$(r_{\nu}^{(m)})_{\nu =0}^{\infty}$, defined in Step 5, is strictly increasing.
\end{lem}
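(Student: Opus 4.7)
The lemma has two assertions, and the second---strict monotonicity of $(r_{\nu}^{(m)})$---follows immediately from the first: by Step~5 each increment $r_{\nu+1}^{(m)}-r_{\nu}^{(m)}$ is defined to be a length of the form $l(P_2^{\cdot,\cdot})$, so once we know every such length is strictly positive, monotonicity is free. The plan is therefore to fix an arbitrary $r>0$ and $m\geq n_0$ and show $M^{r,m}>r$ by exhibiting a single point of $P_2^{r,m}$ whose modulus strictly exceeds $r$.

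My strategy is to descend one level at a time through the nested construction. In both Case~1 and Case~2 of Step~4 the partition $P_2^{r,m}$ contains $P_1^{r,m}$ (in Case~2 as the $\nu=0$ summand $P_1^{r+0\cdot\mu_1(m),m}$). By Step~3, $P_1^{r,m}$ in turn contains $P_0^{\mu(r,m,k_0-1),\,m+(k_0-1)m_0}$. Thus the problem reduces to checking two things: that this innermost $P_0$-piece is non-empty with all points of modulus exactly $\mu(r,m,k_0-1)$, and that $\mu(r,m,k_0-1)>r$.

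For the first point I would recall from Step~2 that $P_0^{\mu,m'}=\phi_{\mu}(\Delta_{m'})$ lies entirely on the arc of radius $\mu$, and from Step~1 that $\Delta_{m'}$ always contains $\theta_0$ by construction; hence $P_0^{\mu,m'}$ is non-empty and each of its points has modulus exactly $\mu$. For the strict inequality, I would first observe that the standing constraints $c_0>2$ and $0<c_2<1$ force $k_0=[2c_0/c_2]+1\geq 5$, so the index $k_0-1$ is at least $4$; then Lemma~\ref{propertiesl1} applied with $k_1=0$ and $k_2=k_0-1$ gives $\mu(r,m,k_0-1)>\mu(r,m,0)=r$, with explicit gap $\sum_{j=1}^{k_0-1}c_2/|\lambda_{m+jm_0-1}|>0$. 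Combining these, $M^{r,m}\geq \mu(r,m,k_0-1)>r$, which is $l(P_2^{r,m})>0$.

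I do not anticipate any real obstacle: the argument is purely definitional, a matter of peeling back the five-step construction of $P_2^{r,m}$ and invoking Lemma~\ref{propertiesl1}. The only mild subtlety is noticing that the numerical choices made in subsection~\ref{good} were calibrated precisely so that $k_0\geq 2$; without this the index $k_0-1$ would degenerate to $0$, the sum $\sum_{j=1}^{k_0-1}$ would be empty, and the reduction above would yield only $\mu(r,m,0)=r$, not a strict inequality.
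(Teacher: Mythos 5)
Your proof is correct and follows essentially the same route as the paper: both arguments peel back the definition of $P_2^{r,m}$ to a partition of order $1$, note that the choice of $k_0$ (your explicit bound $k_0\geq 5$) forces strictly different heights among its order-$0$ members via Lemma \ref{propertiesl1}, and conclude $M^{r,m}>r$. Your version is slightly more explicit in exhibiting the point $\mu(r,m,k_0-1)e^{2\pi i\theta_0}$ and checking non-emptiness, but the substance is identical.
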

\begin{proof}
Recall that $r_0^{(m)}:=r_0>0$ and $r_{\nu +1}^{(m)}-r_{\nu }^{(m)}:=l(P_2^{ r_{\nu }^{(m)} ,m+\nu k_0m_0})$,
see Step 5. Hence, it suffices to show that $l(P_2^{ r_{\nu }^{(m)} ,m+\nu k_0m_0})>0$. On the other hand by the
definition of the length of the partition $P_2^{r,m}$ we have
$$l(P_2^{r,m}):=M^{r,m}-r,$$
where
$$M^{r,m}:=\max \left\{ |w|: w\in P_2^{r,m} \right\} .$$
The partition $P_2^{r,m}$ is defined as a union of partitions $P_1^{r',m'}$ for certain $r',m'$. Pick such a
$P_1^{r',m'}$ which in turn is defined as a union of partitions $P_0^{r'',m''}$ for certain $r'',m''$. By the
choice of $k_0$ we conclude that $P_1^{r',m'}$ contains at least five partitions $P_0^{r'',m''}$ with pairwise
different heights, hence by Lemma \ref{propertiesl1} we get
$$ \min\left\{ |w|: w\in P_1^{r',m'} \right\} < \max\left\{ |w|: w\in P_1^{r',m'}\right\} .$$
Observe now that $$r\leq  \min\left\{ |w|: w\in P_1^{r',m'} \right\} \,\,\, \textrm{and} \,\,\, \max\left\{ |w|:
w\in P_1^{r',m'}  \right\} \leq M^{r,m}.$$ The above inequalities imply that $l(P_2^{r,m})>0$ and this completes
the proof of the lemma.
\end{proof}

\begin{lem} \label{propertiesl3}
Consider the partition $P_m:= S\bigcap \left(\bigcup_{\nu =0}^{\nu_1^{(m)}}P_2^{ r_{\nu }^{(m)} ,m+\nu k_0m_0}
\right)$ defined in Step 5, for fixed $m\in \mathbb{N}$ with $m\geq n_0$. Take $\nu_1, \nu_2\in \{ 0,\ldots
,\nu_1^{(m)} \}$, with $\nu_1<\nu_2$ and $\nu_2-\nu_1 \geq 2$. Then we have
$$
\max\left\{ |w|: w\in P_2^{ r_{\nu_1 }^{(m)} ,m+\nu_1 k_0m_0} \right\} < \min \left\{ |w|: w\in P_2^{ r_{\nu_2
}^{(m)} ,m+\nu_2 k_0m_0} \right\} .
$$
In particular,
$$
P_2^{ r_{\nu_1 }^{(m)} ,m+\nu_1 k_0m_0} \cap P_2^{ r_{\nu_2}^{(m)} ,m+\nu_2 k_0m_0} =\emptyset.
$$

\end{lem}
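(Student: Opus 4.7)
The plan is to reduce the lemma to the strict monotonicity of the sequence $(r_{\nu}^{(m)})$ established in Lemma \ref{strincr}, combined with two elementary identities for the extremal moduli on a partition of order $2$.

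First I would read off from Step 5 that
\[
\max\{|w|:w\in P_2^{r_{\nu}^{(m)},m+\nu k_0 m_0}\}=M^{r_{\nu}^{(m)},m+\nu k_0 m_0}=r_{\nu+1}^{(m)},
\]
since by definition $M^{r,m}=r+l(P_2^{r,m})$ and the recursion defining $r_{\nu+1}^{(m)}$ is precisely $r_{\nu+1}^{(m)}=r_{\nu}^{(m)}+l(P_2^{r_{\nu}^{(m)},m+\nu k_0 m_0})$. So the outer circle of $P_2^{r_{\nu}^{(m)},m+\nu k_0 m_0}$ has radius exactly $r_{\nu+1}^{(m)}$.

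Second I would establish the complementary lower bound
\[
\min\{|w|:w\in P_2^{r,m}\}\ge r\qquad (r>0,\ m\in\N).
\]
In Case 2 of Step 4, every $w\in P_2^{r,m}$ lies in some $P_1^{r+\nu\mu_1(m),m}$ with $\nu\ge 0$, so inequality \eqref{St4ineq3} gives $|w|\ge r+\nu\mu_1(m)\ge r$. In Case 1, $P_2^{r,m}=P_1^{r,m}=\bigcup_{k=0}^{k_0-1}P_0^{\mu(r,m,k),m+km_0}$, and every element of $P_0^{\mu(r,m,k),m+km_0}$ has modulus $\mu(r,m,k)\ge r$ by the definition of $\mu(r,m,k)$ in Step 3.

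Combining these two observations with Lemma \ref{strincr}, under the hypothesis $\nu_2-\nu_1\ge 2$ I would conclude
\[
\max\{|w|:w\in P_2^{r_{\nu_1}^{(m)},m+\nu_1 k_0 m_0}\}=r_{\nu_1+1}^{(m)}<r_{\nu_1+2}^{(m)}\le r_{\nu_2}^{(m)}\le\min\{|w|:w\in P_2^{r_{\nu_2}^{(m)},m+\nu_2 k_0 m_0}\},
\]
and the disjointness statement follows immediately. No real obstacle is expected here; the argument is essentially bookkeeping against the recursive definition of the $r_\nu^{(m)}$. The only subtle point is why the assumption $\nu_2-\nu_1\ge 2$ cannot be relaxed: for consecutive indices one would merely obtain $\max=r_{\nu_1+1}^{(m)}=r_{\nu_2}^{(m)}\le\min$, and consecutive partitions are in fact permitted to meet along the circle of radius $r_{\nu_2}^{(m)}$, so the gap of two is exactly what is needed to force strict separation.
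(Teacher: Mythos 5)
Your proposal is correct and follows essentially the same route as the paper: identify $\max\{|w|:w\in P_2^{r_{\nu}^{(m)},m+\nu k_0m_0}\}$ with $r_{\nu+1}^{(m)}$ via the recursion for the lengths, invoke the strict monotonicity of $(r_{\nu}^{(m)})$ from Lemma \ref{strincr}, and bound the minimum modulus of the later partition from below by its basis $r_{\nu_2}^{(m)}$. The only cosmetic difference is that the paper states the minimum as an exact (trivial) equality $r_{\nu_2}^{(m)}=\min\{|w|:w\in P_2^{r_{\nu_2}^{(m)},m+\nu_2 k_0m_0}\}$, whereas you prove only the inequality $\min\ge r_{\nu_2}^{(m)}$, which suffices.
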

\begin{proof}
We proceed by induction on $\nu\in \{ 0,\ldots ,\nu_1^{(m)} \}$. Recall the following quantities by Step 5:

\begin{equation} \label{disj1}
M^{r,m}:=\max \{ |w|: w\in P_2^{r,m} \} ,
\end{equation}

\begin{equation} \label{disj2}
 l(P_2^{r,m}):= M^{r,m}-r ,
\end{equation}

\begin{equation} \label{disj3}
r_0^{(m)}:=r_0,\,\, r_{\nu +1}^{(m)}-r_{\nu }^{(m)}:=l(P_2^{ r_{\nu }^{(m)} ,m+\nu k_0m_0}).
\end{equation}
Applying (\ref{disj1}), (\ref{disj2}), (\ref{disj3}) we get
\begin{equation} \label{disj4}
M^{r_{\nu}^{(m)},m+\nu k_0m_0}=\max \left\{ |w|: w\in P_2^{ r_{\nu }^{(m)} ,m+\nu k_0m_0} \right\}
\end{equation}
$$
=r_{\nu}^{(m)}+l(P_2^{ r_{\nu }^{(m)} ,m+\nu k_0m_0})=r_{\nu +1}^{(m)}
$$
for $\nu\in \{ 0,\ldots ,\nu_1^{(m)} \}$. Using (\ref{disj4}) and the fact that the sequence
$(r_{\nu}^{(m)})_{\nu =0}^{\infty}$ is strictly increasing, see Lemma \ref{strincr}, we have
\begin{equation} \label{disj5}
\max\left\{ |z|: z\in P_2^{ r_{\nu_1 }^{(m)} ,m+\nu_1 k_0m_0} \right\} =M^{r_{\nu_1}^{(m)},m+\nu_1
k_0m_0}=r_{\nu_1+1}^{(m)}<r_{\nu_2}^{(m)}.
\end{equation}
Combining the last with the (trivial) equality
\begin{equation} \label{disj6}
r_{\nu_2}^{(m)}= \min \left\{ |w|: w\in P_2^{ r_{\nu_2 }^{(m)} ,m+\nu_2 k_0m_0} \right\}
\end{equation}
the result follows.
\end{proof}

\begin{lem} \label{propertiesl4}
Consider the partition $P_m:= S\bigcap \left(\bigcup_{\nu =0}^{\nu_1^{(m)}}P_2^{ r_{\nu }^{(m)} ,m+\nu k_0m_0}
\right)$ defined in Step 5, for fixed $m\in \mathbb{N}$ with $m\geq n_0$ and $\nu_1^{(m)}\geq 1$. Take $\nu \in
\{ 0,\ldots ,\nu_1^{(m)}-1 \}$. Then we have
$$
\max\left\{ |w|: w\in P_2^{ r_{\nu }^{(m)} ,m+\nu k_0m_0} \right\} = \min \left\{ |w|: w\in P_2^{ r_{\nu +1
}^{(m)} ,m+(\nu+1) k_0m_0} \right\}
$$
and
$$
P_2^{ r_{\nu }^{(m)} ,m+\nu k_0m_0} \cap P_2^{ r_{\nu +1 }^{(m)} ,m+(\nu+1) k_0m_0} =\{ r_{\nu +1 }^{(m)}e^{2\pi
i\theta_0} \} .
$$
\end{lem}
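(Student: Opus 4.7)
The plan is to reduce both assertions to equation (\ref{disj4}) from the proof of Lemma~\ref{propertiesl3} and to the almost disjoint property of Lemma~\ref{propertiesl0}.

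First, (\ref{disj4}) already provides
\[
\max\{|w|:w\in P_2^{r_\nu^{(m)},\,m+\nu k_0 m_0}\} \;=\; M^{r_\nu^{(m)},\,m+\nu k_0 m_0} \;=\; r_{\nu+1}^{(m)}.
\]
On the other hand, unwinding the two nested unions from Steps~3 and~4, the innermost order-$0$ partition sitting inside $P_2^{r_{\nu+1}^{(m)},\,m+(\nu+1)k_0 m_0}$, obtained by taking the first term in the Step~4 union ($\nu=0$) together with $k=0$ in the Step~3 union, is exactly $P_0^{r_{\nu+1}^{(m)},\,m+(\nu+1)k_0 m_0}$, and every point of this arc has modulus $r_{\nu+1}^{(m)}$. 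Combined with inequality (\ref{St4ineq3}), which forces $|w|\geq r_{\nu+1}^{(m)}$ for every $w\in P_2^{r_{\nu+1}^{(m)},\,m+(\nu+1)k_0 m_0}$, this identifies the minimum modulus as $r_{\nu+1}^{(m)}$ as well, and hence yields the first displayed equality of the lemma.

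For the intersection statement, any common point $w$ automatically satisfies $|w|=r_{\nu+1}^{(m)}$. I would then trace through Steps~3 and~4 once more to pick out which order-$0$ arcs carry these extremal-modulus points on each side: on the left, they lie in the outermost order-$0$ arc used to build $P_2^{r_\nu^{(m)},\,m+\nu k_0 m_0}$, namely $P_0^{r_{\nu+1}^{(m)},\,m+\nu k_0 m_0 + (k_0-1)m_0}$; on the right, they lie in the innermost order-$0$ arc $P_0^{r_{\nu+1}^{(m)},\,m+(\nu+1)k_0 m_0}$. Since the two densities differ by $m_0>0$, Lemma~\ref{propertiesl0} says that these two order-$0$ arcs intersect in precisely $\{r_{\nu+1}^{(m)} e^{2\pi i\theta_0}\}$, giving the stated intersection.

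The main (and essentially only) obstacle is the bookkeeping: one has to unfold two layers of nested unions carefully to identify which order-$0$ partition realizes the maximum modulus in $P_2^{r_\nu^{(m)},\,m+\nu k_0 m_0}$ and which realizes the minimum modulus in $P_2^{r_{\nu+1}^{(m)},\,m+(\nu+1)k_0 m_0}$. Once these are singled out, the required intra-$P_2$ maximum/minimum comparisons are supplied by Lemmas~\ref{propertiesl1} and~\ref{propertiesl2}, and the single-point intersection comes from Lemma~\ref{propertiesl0} applied to the two order-$0$ arcs at distinct densities sharing the common angular endpoint $\theta_0$.
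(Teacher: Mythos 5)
Your proposal is correct and follows essentially the same route as the paper: it derives the equality of the extremal moduli from (\ref{disj4})--(\ref{disj6}), identifies the two order-$0$ arcs of common height $r_{\nu+1}^{(m)}$ with densities $m+(\nu+1)k_0m_0-m_0$ and $m+(\nu+1)k_0m_0$, and concludes via the almost disjoint property of Lemma \ref{propertiesl0}. The bookkeeping you flag (unique height-to-density correspondence inside a $P_2$, via Lemmas \ref{propertiesl1} and \ref{propertiesl2}) is exactly what the paper's proof uses as well.
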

\begin{proof}
By (\ref{disj5}), (\ref{disj6}) we get
$$
\max\left\{ |z|: z\in P_2^{ r_{\nu }^{(m)} ,m+\nu k_0m_0} \right\} =r_{\nu+1}^{(m)}= \min \left\{ |w|: w\in
P_2^{ r_{\nu +1 }^{(m)} ,m+(\nu +1) k_0m_0} \right\}.
$$
Observe that $P_2^{ r_{\nu }^{(m)} ,m+\nu k_0m_0}=\cup_{r',m'}P_0^{r',m'}$ and $P_2^{ r_{\nu +1 }^{(m)}
,m+(\nu+1) k_0m_0}=\cup_{r'',m''}P_0^{r'',m''}$. Therefore the partitions $P_2^{ r_{\nu }^{(m)} ,m+\nu k_0m_0}$,
$P_2^{ r_{\nu +1 }^{(m)} ,m+(\nu+1) k_0m_0}$ have non-empty intersection if and only if
$$P_0^{r',m'} \cap P_0^{r'',m''} \neq \emptyset \,\,\, \textrm{for some} \,\,\,r',r'',m',m''.$$
Clearly two partitions $P_0^{r',m'}$ $P_0^{r'',m''}$ of zero order have non-empty intersection if and only if
the heights $r',r''$ are the same. In our case the last happens if and only if $r'=r''=r_{\nu+1}^{(m)}$. On the
other hand it is not difficult to see that in every partition $P_2^{r,m}=\cup P_0^{R,M}$ there do not exist
$P_0^{R,M_1}$, $P_0^{R,M_2}$ ``members" of $P_2^{r,m}$ with $M_1\neq M_2$. Hence, by the definition of the
partition of order $2$, we have that the partition of order $0$ and height $r_{\nu+1}^{(m)}$ which is a member
of $P_2^{ r_{\nu }^{(m)} ,m+\nu k_0m_0}$ is the one with density $m+(\nu+1)k_0m_0-m_0$. In a similar manner we
have that the partition of order $0$ and height $r_{\nu+1}^{(m)}$ which is a member of $P_2^{ r_{\nu +1 }^{(m)}
,m+(\nu +1)k_0m_0}$ is the one with density $m+(\nu +1)k_0m_0$. Since $m+(\nu+1)k_0m_0-m_0<m+(\nu +1)k_0m_0$, by
Lemma \ref{propertiesl0} it follows that
$$
P_0^{r_{\nu +1 }^{(m)}, m+(\nu+1)k_0m_0-m_0} \cap P_0^{r_{\nu +1 }^{(m)}, m+(\nu +1)k_0m_0} = \{ r_{\nu +1
}^{(m)}e^{2\pi i\theta_0} \}
$$
and this finishes the proof of the lemma.

\end{proof}
\begin{lem}
Fix a positive integer $m$ with $m\geq n_0$. Then the sequence $( r_{\nu}^{(m)} )_{\nu =1}^{\infty }$, defined
in Step 4, is strictly increasing and $$ \lim_{\nu \to +\infty }r_{\nu}^{(m)}=+\infty .$$
\end{lem}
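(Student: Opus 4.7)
Strict monotonicity is already covered by Lemma \ref{strincr}, so the task reduces to showing $r_\nu^{(m)} \to +\infty$. The plan is to establish a uniform lower bound of the form $l(P_2^{r_\nu^{(m)}, m_\nu}) \geq c_4/(2 m_\nu)$, where $m_\nu := m + \nu k_0 m_0$, and then compare with the harmonic series: such a bound would give
\[
r_\nu^{(m)} - r_0 \;=\; \sum_{k=0}^{\nu-1} l(P_2^{r_k^{(m)}, m_k}) \;\geq\; \frac{c_4}{2}\sum_{k=0}^{\nu-1}\frac{1}{m + k\,k_0 m_0},
\]
and the right-hand side tends to $+\infty$.

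The first step I would carry out is to verify that, for every $m' \geq n_0$ and every $r > 0$, the construction of $P_2^{r, m'}$ always falls into Case 2 of Step 4. By property (2.2) the sequence $(|\lambda_n|)_{n \geq n_0}$ is strictly increasing, so $|\lambda_{m' + jm_0 - 1}| \geq |\lambda_{m'}|$ for every $j \geq 1$, whence $\mu_1(m') \leq k_0 c_2 / |\lambda_{m'}|$. Property (2.8), rewritten as $c_2 k_0/|\lambda_{m'}| < c_4/(2m')$, then upgrades this to $\mu_1(m') < c_4/(2m') < c_4/m'$, placing us in Case 2. In particular, the set $\mathcal{A}$ from Step 4 admits a genuine maximum $\nu_0 := \nu_0^{r, m'}$.

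The second step is to translate the defining property of $\nu_0$ into the quantitative bound on $l(P_2^{r, m'})$. By Lemma \ref{propertiesl2}, the maximum of $|w|$ over $P_2^{r, m'}$ is attained on the topmost layer $P_1^{r + \nu_0 \mu_1(m'), m'}$, which via the explicit form of $\mu(r',m',k_0-1)$ yields
\[
l(P_2^{r, m'}) \;=\; (\nu_0 + 1)\mu_1(m') - \frac{c_2}{|\lambda_{m' + k_0 m_0 - 1}|}.
\]
On the other hand, the failure $\nu_0 + 1 \notin \mathcal{A}$ forces the next layer $P_1^{r + (\nu_0+1)\mu_1(m'), m'}$ to contain a point of modulus at least $r + c_4/m'$, which by the same height formula gives $(\nu_0 + 2)\mu_1(m') \geq c_4/m' + c_2/|\lambda_{m' + k_0 m_0 - 1}|$. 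Subtracting the two expressions produces $l(P_2^{r, m'}) \geq c_4/m' - \mu_1(m') > c_4/(2m')$, the desired uniform lower bound. Applying this with $m' = m_\nu$ and summing closes the argument.

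The main obstacle lies in the second step: one has to track carefully which layer of $P_2^{r, m'}$ attains the maximum modulus (which is why Lemma \ref{propertiesl2} is crucial) and extract the correct sharp inequality from the maximality of $\nu_0$. Once this bookkeeping is performed, the divergence of $(r_\nu^{(m)})$ reduces to the classical divergence of the harmonic series.
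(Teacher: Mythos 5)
Your proposal is correct and follows essentially the same route as the paper: both establish the uniform lower bound $r_{\nu+1}^{(m)}-r_{\nu}^{(m)}=l(P_2^{r_\nu^{(m)},\,m+\nu k_0m_0})>\frac{c_4}{2(m+\nu k_0m_0)}$ from the bound $\mu_1(m')<\frac{k_0c_2}{|\lambda_{m'}|}$ together with property (2.8), and then conclude divergence by comparison with the harmonic series. The only difference is cosmetic: you make explicit (via ruling out Case 1 and the maximality of $\nu_0^{r,m'}$) the inequality $l(P_2^{r,m'})\geq \frac{c_4}{m'}-\mu_1(m')$, which the paper asserts directly ``by the definition of the partition $P_2^{r,m_1}$''.
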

\begin{proof}
We shall prove that
\begin{equation} \label{incrinfty}
 r_{\nu +1}^{(m)}-r_{\nu}^{(m)}> \frac{c_4}{2(m+\nu k_0m_0)}, \,\,\, \textrm{for every} \,\,\, \nu =0,1,\ldots .
\end{equation}
Fix $\nu \in \{ 0,1,\ldots \}$ and in order to simplify notation set
$$ r:=r_{\nu}^{(m)}, \,\,\, r':=r_{\nu +1}^{(m)}, \,\,\, m_1:=m+\nu k_0m_0.$$
By definition, see Step 4, we have
$$r'-r:=l(P_2^{r,m_1})$$
and again by definition, see Step 4, and since $( |\lambda_n|)$ is strictly increasing we get
\begin{equation} \label{ineq1incrinfty}
\mu_1(m_1):=\sum_{j=1}^{k_0}\frac{c_2}{|\lambda_{m_1+jm_0-1}|} <\frac{k_0c_2}{|\lambda_{m_1}|}.
\end{equation}
By the definition of the partition $P_2^{r,m_1}$ we obtain the inequality
$$r+l(P_2^{r,m_1})+\mu_1(m_1)\geq r+\frac{c_4}{m_1}, $$ which, in view of (\ref{ineq1incrinfty}), gives the
following lower bound on the length of $P_2^{r,m_1}$:
\begin{equation} \label{ineq2incrinfty}
l(P_2^{r,m_1})>\frac{c_4}{m_1}-\frac{k_0c_2}{|\lambda_{m_1}|} .
\end{equation}
By (2.8) we have
$$
\frac{m_1}{|\lambda_{m_1}|}<\frac{c_4}{2c_2k_0}.
$$
Combining the last inequality with (\ref{ineq2incrinfty}) we get
$$
r'-r:=l(P_2^{r,m_1})>\frac{c_4}{2m_1},
$$
which proves (\ref{incrinfty}). Clearly (\ref{incrinfty}) implies that $ \lim_{\nu \to +\infty
}r_{\nu}^{(m)}=+\infty $.

\end{proof}

\section{Construction and properties of the disks} \label{section4}
For the rest of this section we fix a sequence $(\la_n)$ of non-zero complex numbers satisfying the following:
\begin{enumerate}
\item[1)] $|\la_{n+1}|-|\la_n|\ra+\infty$ as $n\ra+\infty$
\item[2)] $\dfrac{\la_{n+1}}{\la_n}\ra1$ as $n\ra+\infty$
\item[3)] $\underset{n\ra+\infty}{\dis\lim\inf}\Big(n\Big(\Big|\dfrac{\la_{n+1}}{\la_n}\Big|
    -1\Big)\Big)>0$.
\end{enumerate}
We also fix the numbers $r_0,R_0,\thi_0,\thi_T$,$c_0$, $c_1$, $c_2$, $c_3$, $c_4$, $m_0$, $k_0$, which are
defined in subsection \ref{good}. Finally, on the basis of the above, for every positive integer $m$ we consider
the partition $P_m$ constructed in the previous section.

\subsection{Construction of the disks}
The strategy in this subsection is to construct a certain family of pairwise disjoint disks, which will allow us
later to apply successfully Runge's theorem in order to prove Proposition $2.1$. What we are going to do is to
assign to each point $w$ of the partition $P_m$ a suitable closed disk with center $w\lambda (w)$ and radius
$c_0$ (the radius will be the same for every member of the family of the disks), where $\lambda(w)$ will be
chosen from the sequence $(\lambda_n)$. We shall see that, the construction of the partition $P_m$ ensures on
the one hand that the points of the partition are close enough to each other on the sector $S$ and on the other
hand that the disks centered at these points with fixed radius $c_0$ are pairwise disjoint. This is the hard
part of our argument and also shows that the required construction is very delicate. So, let us begin with the
construction of the disks.

We set
$$B:=\{ z\in \mathbb{C}|\,\,\, |z|\leq c_0 \} .$$
Fix a positive integer $m\geq n_0$ and let $w$ be any point of the partition $P_m$ of the sector $S$. We
distinguish two cases.

{\bf Case 1.} Assume that $$w\in  \{ r_{\nu }^{(m)}e^{2\pi i\theta_0 }, \,\,\, \nu =1, \ldots ,\nu_1^{(m)} \}.$$
Then $w=r_{\nu }^{(m)}e^{2\pi i\theta_0 }$ for some $\nu \in \{ 1,\ldots ,\nu_1^{(m)} \}$. We define
$$\lambda (w):=\lambda_{m+\nu k_0m_0-m_0}$$ and
$$ B_w:=B+w\lambda (w) .$$

{\bf Case 2.} Assume that $$w\in P_m \setminus \{ r_{\nu }^{(m)}e^{2\pi i\theta_0 }, \,\,\, \nu =1, \ldots
,\nu_1^{(m)} \}.$$ By Lemmas \ref{propertiesl3}, \ref{propertiesl4}, there exists a unique $\nu \in \{
0,1,\ldots , \nu_1^{(m)} \}$ such that
$$ w\in P_2^{r_{\nu }^{(m)},m+\nu k_0m_0}=\cup_{r',m'}P_0^{r',m'}.$$
Applying Lemmas \ref{propertiesl1}, \ref{propertiesl2}, \ref{propertiesl3}, \ref{propertiesl4}, we conclude that
there is a unique pair $(r',m')$ such that
$$ w=r'e^{2\pi i \theta_k^{(m')}} \,\,\, \textrm{for some} \,\,\, k\in \{ 0, 1, \ldots ,\nu_{m'} \} .$$
Observe that $k$ can be uniquely written in the form
$$k=\rho m_0+j, \,\,\, \textrm{for some} \,\,\, \rho \in \mathbb{N} ,\,\,\, j\in \{ 0, \ldots , m_0-1 \}.$$
From the above and the definition of the partition $\Delta_{m'}$, see Step 1, we have
$$ \theta_k^{(m')}=\theta_{\rho m_0+j}^{(m')}=\theta_j^{(m')}+\rho \sigma_{m'}.$$
Finally we define
$$\lambda (w):=\lambda_{m'+j}$$ and
$$ B_w:=B+w\lambda (w) .$$
Therefore for every $w\in P_m$ we assigned a disk $B_w$ according to the above rules. This completes the desired
construction of the disks assigned to the partition $P_m$.

\subsection{Properties of the disks}
Our aim in this subsection is to prove that for a fixed positive integer $m$, the disks $B_w$ for $w\in P_m$
(corresponding to the partition $P_m$), that have been constructed in the previous subsection, are pairwise
disjoint.

\begin{lem} \label{diskprop1}
Fix a positive integer $m$ with $m\geq n_0$. Then we have
$$ B\cap B_w=\emptyset \,\,\, \textrm{for every} \,\,\, w\in P_m .$$
\end{lem}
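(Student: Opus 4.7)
The plan is to reduce the disjointness of the two closed disks $B$ and $B_w$, each of radius $c_0$ and with respective centers $0$ and $w\lambda(w)$, to the lower bound on the distance between centers
\[
|w\lambda(w)| > 2c_0,
\]
which is both necessary and sufficient for $B\cap B_w=\emptyset$.

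To produce this lower bound, I would estimate the modulus of $w$ and of $\lambda(w)$ separately. First, since $w\in P_m\subseteq S$ by the construction in Step 5 of Section \ref{sec3}, the defining inequality of the sector $S$ (with inner radius $r_0$) gives $|w|\ge r_0$. Second, inspecting the two cases in Subsection 4.1, the index $n$ for which $\lambda(w)=\lambda_n$ is always at least $m$: in Case 1 we have $n=m+\nu k_0 m_0-m_0$ with $\nu\ge 1$, and since $k_0,m_0\ge 1$ this gives $n\ge m$; in Case 2 we have $n=m'+j$, where $m'$ is the density of a partition of order $1$ appearing inside $P_2^{r_{\nu}^{(m)},m+\nu k_0m_0}$, so $m'\ge m+\nu k_0 m_0\ge m$, and $j\ge 0$. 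Since $m\ge n_0$, in particular $n\ge n_0$, so inequality (2.3) from Subsection \ref{good} applies and yields
\[
|\lambda(w)|=|\lambda_n|>\frac{4c_0}{r_0}.
\]

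Combining the two estimates gives
\[
|w\lambda(w)|\ge r_0\cdot\frac{4c_0}{r_0}=4c_0>2c_0,
\]
and therefore $B\cap B_w=\emptyset$, as required.

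The main potential pitfall is not analytic but bookkeeping: one must verify that in every branch of the definition of $\lambda(w)$ (Case 1 vs.\ Case 2, and across all of the nested indices $\nu$, $k$, $j$ used to locate $w$ inside the nested partitions of orders $0$, $1$, $2$) the index of $\lambda(w)$ is indeed at least $n_0$, so that the uniform lower bound (2.3) is available. Once this is checked, the metric estimate itself is a one-line consequence of $|w|\ge r_0$ and of (2.3), and the choice $n_0$ is precisely engineered in Subsection \ref{good} to make this argument work.
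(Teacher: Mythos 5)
Your proof is correct and follows essentially the same route as the paper: reduce disjointness to the center-distance bound $|w\lambda(w)|>2c_0$, use $|w|\ge r_0$, and note that $\lambda(w)=\lambda_n$ with $n\ge m\ge n_0$ so that inequality (2.3) applies. The only difference is that you spell out the index bookkeeping across the two cases of the definition of $\lambda(w)$, which the paper leaves implicit.
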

\begin{proof}
Take $w\in P_m$. The closed disks $B$, $B_w$ are centered at $0, w\lambda (w)$ respectively and they have the
same radius $c_0$. Hence, we have to show that $|w\lambda (w)|>2c_0$. Since $|w|\geq r_0$ it suffices to prove
that $$|\lambda (w)|>\frac{2c_0}{r_0} .$$ Observe now that, by the definition of $\lambda (w)$ in the previous
subsection, $\lambda (w)=\lambda_n$ for some positive integer $n$ with $n\geq m\geq n_0$. By property (2.3) we
conclude that $|\lambda_n|>2c_0/r_0$ and this finishes the proof of the lemma.
\end{proof}

\begin{lem} \label{diskprop2}
Fix a positive integer $m$ with $m\geq n_0$.
$$ \textrm{If} \,\,\, w_1,w_2\in P_m \,\,\, \textrm{with} \,\,\, w_1\neq w_2, \,\,\, |w_1|\leq |w_2|, \,\,\, |\lambda
(w_1)| <|\lambda (w_2)| $$
$$\textrm{then} \,\,\, B_{w_1}\cap B_{w_2}=\emptyset .$$
\end{lem}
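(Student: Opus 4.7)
\begin{Proof}[Proof proposal for Lemma \ref{diskprop2}]
The plan is straightforward, since we only need to bound the distance between the two centers $w_1\lambda(w_1)$ and $w_2\lambda(w_2)$ from below by $2c_0$ (the sum of the two radii). The main observation is that the hypotheses $|w_1|\le|w_2|$ and $|\lambda(w_1)|<|\lambda(w_2)|$ align perfectly to let us remove the products by the ordinary triangle inequality, after which property (2.2) supplies a quantitative gap between consecutive $|\lambda_n|$.

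First I would note that, by construction in the previous subsection, $\lambda(w_1)=\lambda_{n_1}$ and $\lambda(w_2)=\lambda_{n_2}$ for some integers $n_1,n_2\ge m\ge n_0$. Property (2.2) tells us that the sequence $(|\lambda_n|)_{n\ge n_0}$ is strictly increasing, with consecutive gaps exceeding $4c_0/r_0$. Hence the hypothesis $|\lambda(w_1)|<|\lambda(w_2)|$ forces $n_1<n_2$, i.e.\ $n_2\ge n_1+1$, so that
\[
|\lambda(w_2)|-|\lambda(w_1)|=|\lambda_{n_2}|-|\lambda_{n_1}|\ge |\lambda_{n_1+1}|-|\lambda_{n_1}|>\frac{4c_0}{r_0}.
\]

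Next I would apply the triangle inequality to the two centers, then use $|w_1|\le|w_2|$ to collect the $|w_2|$ factor, and finally the bound $|w_2|\ge r_0$ (since $w_2\in P_m\subset S$) together with the previous estimate:
\[
|w_2\lambda(w_2)-w_1\lambda(w_1)|\ge |w_2||\lambda(w_2)|-|w_1||\lambda(w_1)|\ge |w_2|\bigl(|\lambda(w_2)|-|\lambda(w_1)|\bigr)>r_0\cdot\frac{4c_0}{r_0}=4c_0.
\]
Since $4c_0>2c_0$, the two closed disks $B_{w_1}$ and $B_{w_2}$ of radius $c_0$ are disjoint, as required.

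I do not expect any real obstacle here: the only subtlety is verifying that $n_1<n_2$ (so that (2.2) applies through a telescoping step) and that $|w_2|\ge r_0$; both follow directly from the preceding definitions and property (2.2). The reason the lemma needs the hypothesis $|\lambda(w_1)|<|\lambda(w_2)|$ rather than equality is exactly that when $|\lambda(w_1)|=|\lambda(w_2)|$ the two disk centers can lie at the same radial distance from the origin and a different argument (presumably based on angular separation via Step~1) will be needed; that case is presumably handled in a separate lemma.
\end{Proof}
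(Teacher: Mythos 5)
Your proposal is correct and follows essentially the same route as the paper: the reverse triangle inequality on the centers, the observation that $n_1<n_2$ since $(|\lambda_n|)_{n\ge n_0}$ is strictly increasing, the lower bound $r_0$ on the modulus of the partition point, and property (2.2) to beat $2c_0$. The only cosmetic difference is that you factor out $|w_2|$ where the paper factors out $|w_1|$; both are justified by $|w_1|\le|w_2|$ and give the same estimate.
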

\begin{proof}
Take $w_1,w_2$ satisfying the hypothesis of the lemma. We need to show that $|w_1\lambda (w_1)-w_2\lambda
(w_2)|>2c_0$. Observe that $\lambda(w_j)=\lambda_{n_j}$ for some positive integer $n_j\geq m \geq n_0$, $j=1,2$.
Since $|\lambda (w_1)| <|\lambda (w_2)|$ and the sequence $(|\lambda_n|)$ is strictly increasing we conclude
that $n_1<n_2$. We have
$$ |w_1\lambda (w_1)-w_2\lambda (w_2)|\geq  ||w_1\lambda (w_1)|-|w_2\lambda (w_2)||=|w_2\lambda (w_2)|-|w_1\lambda
(w_1)|$$ $$\geq r_0 (|\lambda (w_2)|-|\lambda (w_1)|)=r_0 (|\lambda_{n_2}|-|\lambda_{n_1}|)\geq r_0
(|\lambda_{n_1+1}|-|\lambda_{n_1}|)>2c_0, $$ where the last inequality above follows by property (2.2).
\end{proof}

\begin{lem} \label{diskprop3}
Fix a positive integer $m$ with $m\geq n_0$.
$$\textrm{If} \,\,\, w_1,w_2\in P_m \,\,\, \textrm{with} \,\,\, w_1\neq w_2 \,\,\, \textrm{and}\,\,\, |w_1|=|w_2| \,\,\,
\textrm{then} \,\,\, B_{w_1}\cap B_{w_2}=\emptyset .$$
\end{lem}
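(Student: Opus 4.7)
My plan is to split into two cases depending on whether $|\lambda(w_1)|$ and $|\lambda(w_2)|$ coincide. First, I would note that by property (2.2) the sequence $(|\lambda_n|)_{n\ge n_0}$ is strictly increasing (since $|\lambda_{n+1}|-|\lambda_n|>4c_0/r_0>0$), so equal moduli among the chosen $\lambda$'s forces equality as complex numbers. Hence if $|\lambda(w_1)|\neq|\lambda(w_2)|$, one of the two inequalities $|\lambda(w_1)|<|\lambda(w_2)|$ or the symmetric one holds, and since $|w_1|\le|w_2|$ is automatic from $|w_1|=|w_2|$, Lemma \ref{diskprop2} applies verbatim and delivers the conclusion.

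The core case is therefore $|\lambda(w_1)|=|\lambda(w_2)|$, so $\lambda(w_1)=\lambda(w_2)=\lambda_{m'+j}$ for some density $m'$ and residue $j$. I would now invoke the structural lemmas from \textsection 3 to locate $w_1,w_2$ geometrically. By Lemma \ref{propertiesl3}, two points with equal modulus cannot lie in non-adjacent partitions of order $2$ inside $P_m$; by Lemma \ref{propertiesl2} they must lie in the same $P_1$-partition within a given $P_2$; and by Lemma \ref{propertiesl1} they then share the same $P_0$-partition. The only remaining possibility is the boundary situation described in Lemma \ref{propertiesl4}, where $|w_1|=|w_2|=r_{\nu+1}^{(m)}$ with one point equal to $r_{\nu+1}^{(m)}e^{2\pi i\theta_0}$ (treated by Case 1 of the disk construction) and the other lying in the top $P_0$ of the preceding $P_2$. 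A direct comparison of the assigned $\lambda$'s (using the decomposition $k=\rho m_0+j$ and the shift of densities by $m_0$ between two consecutive $P_2$'s) rules out equality of $\lambda$-indices unless $j=0$ in that decomposition; in both surviving subcases one writes $w_1=r'e^{2\pi i(\theta_j^{(m')}+\rho_1\sigma_{m'})}$, $w_2=r'e^{2\pi i(\theta_j^{(m')}+\rho_2\sigma_{m'})}$ with $|\rho_1-\rho_2|\ge 1$.

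The remaining step is quantitative. The angular separation between $w_1$ and $w_2$ lies in $[\sigma_{m'},\theta_T-\theta_0)=[\sigma_{m'},1/4)$, so $|\rho_1-\rho_2|\sigma_{m'}<1/4<1/2$, and the elementary inequality $\sin(\pi x)\ge 2x$ valid for $x\in[0,1/2]$ gives
\[
|w_1-w_2|=2r'\sin(\pi|\rho_1-\rho_2|\sigma_{m'})\ge 4r'\sigma_{m'}\ge 4r_0\sigma_{m'}.
\]
Now from inequality (I) in Step 1, $\sigma_{m'}>\frac{c_0}{2R_0c_1}\sum_{k=0}^{m_0-1}\frac{1}{|\lambda_{m'+k}|}$, and property (2.1) applied at $n=m'\ge m\ge n_0$ gives $\sum_{k=0}^{m_0-1}\frac{1}{|\lambda_{m'+k}|}>\frac{R_0c_1}{r_0|\lambda_{m'}|}$. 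Combining, $\sigma_{m'}>\frac{c_0}{2r_0|\lambda_{m'}|}$, hence
\[
|w_1\lambda(w_1)-w_2\lambda(w_2)|=|\lambda_{m'+j}|\,|w_1-w_2|\ge 4r_0|\lambda_{m'}|\sigma_{m'}>2c_0,
\]
since $|\lambda_{m'+j}|\ge|\lambda_{m'}|$. As the two disks have common radius $c_0$, this strict inequality forces $B_{w_1}\cap B_{w_2}=\emptyset$.

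The main obstacle I foresee is the bookkeeping in the second case: identifying precisely which configurations of $(w_1,w_2)$ force the $\lambda$-indices to coincide requires carefully comparing the Case 1/Case 2 rules of the disk construction against the adjacency phenomena in Lemmas \ref{propertiesl0} and \ref{propertiesl4}, especially at the shared point $r_{\nu+1}^{(m)}e^{2\pi i\theta_0}$. Once the geometric picture is pinned down, the quantitative estimate is a clean chain of inequalities built out of (I) and (2.1).
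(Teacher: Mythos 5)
Your proposal is correct and follows essentially the same route as the paper's proof: the case $|\lambda(w_1)|\neq|\lambda(w_2)|$ is delegated to Lemma \ref{diskprop2}, and the case $\lambda(w_1)=\lambda(w_2)$ is reduced to two points of the same height whose angular separation is a nonzero integer multiple of $\sigma_{m'}$, which is then bounded below via the chord length, Jordan's inequality, inequality (I) of Step 1 and property (2.1), exactly the paper's computation. Your more explicit bookkeeping at the junction point $r_{\nu+1}^{(m)}e^{2\pi i\theta_0}$ (Lemma \ref{propertiesl4} together with Case 1 of the disk construction) only spells out what the paper subsumes in its ``different partitions of order zero'' and $j_1\neq j_2$ subcases.
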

\begin{proof}
Fix $w_1$, $w_2$ satisfying the hypothesis of the lemma. Then we have $w_1=re^{2\pi i\theta_1}$, $w_2=re^{2\pi
i\theta_2}$, for some $r\in [r_0,R_0]$ and some $\theta_1, \theta_2\in [\theta_0, \theta_T)$. Since $w_1,w_2\in
P_m=\cup_{(r',m')\in J} P_0^{r',m'}$, where $J$ is a suitable set of indices, then
$$ \textrm{either}\,\,\, w_1\in P_0^{r,m_1} \,\, \textrm{and}\,\, w_2\in P_0^{r,m_2} \,\,\,
\textrm{for} \,\,\, (r,m_1),(r,m_2)\in J, \,\,\,  m_1\neq m_2$$
$$ \textrm{or} \,\,\, w_1,w_2\in P_0^{r,m'} \,\,\, \textrm{for some} \,\,\, (r,m')\in J.$$

Let us first consider the case where $w_1$, $w_2$ belong to different partitions of zero order. Then necessarily
we have $|\lambda (w_1)| \neq |\lambda (w_2)|$ and since $|w_1|=|w_2|$, Lemma \ref{diskprop2} implies that the
disks $B_{w_1}$, $B_{w_2}$ are disjoint.

We turn now to the case where both $w_1$, $w_2$ belong to the same partition of zero order $P_0^{r,m'}$. By the
definition of the partition $P_0^{r,m'}$ there exist $k_1,k_2\in \{ 0,\ldots , \nu_{m'} \}$ such that
$$ \theta_1=\theta_{k_1}^{(m')} \,\,\, \textrm{and} \,\,\, \theta_2=\theta_{k_2}^{(m')} .$$ We also have that
$$ k_1=\rho_1m_0+j_1, \,\,\, k_2=\rho_2m_0+j_2$$
for $\rho_1,\rho_2\in \mathbb{N}$ and $j_1,j_2\in \{ 0, \ldots ,m_0-1\} $ and by the definition of the partition
$\Delta_{m'}$, see Step 1, it follows that
$$\theta_{k_1}^{(m')}=\theta_{j_1}^{(m')}+\rho_1\sigma_{m'}, \,\,\,
\theta_{k_2}^{(m')}=\theta_{j_2}^{(m')}+\rho_2\sigma_{m'}, $$ where (recall from Step 1),
$$\sigma_{m'}:=\theta_{m_0}^{(m')}-\theta_0 .$$
We shall consider two cases.

{\bf Case 1.} Assume that $j_1\neq j_2$. Since $\lambda (w_1)=\lambda_{m'+j_1}$, $\lambda
(w_2)=\lambda_{m'+j_2}$ it readily follows that $|\lambda (w_1)|\neq |\lambda (w_2)|$. In view of Lemma
\ref{diskprop2} we conclude that the disks $B_{w_1}$, $B_{w_2}$ are disjoint.

{\bf Case 2.} It remains to handle the case $j_1=j_2$. Observe that in this situation we have
\begin{equation} \label{diskprop3eq1}
\theta_2-\theta_1=(\rho_2-\rho_1)\sigma_{m'}.
\end{equation}
Since $w_1\neq w_2$ and $|w_1|=|w_2|$ we may assume with no loss of generality that $\theta_1<\theta_2$. We
establish below a "sufficiently large" lower bound on $\sigma_{m'}$. Inequality (I) in Step 1 implies the
following
\[
\thi^{(m')}_1-\thi^{(m')}_0>\frac{c_0}{2R_0c_1}\frac{1}{|\la_{m'}|}
\]
\[
\thi^{(m')}_2-\thi^{(m')}_1>\frac{c_0}{2R_0c_1}\cdot\frac{1}{|\la_{m'+1}|}
\]
\hspace*{7cm} $\vdots$ \hspace*{1.5cm} $\vdots$
\[
\thi^{(m')}_{m_0}-\thi^{(m')}_{m_0-1}>\frac{c_0}{2R_0c_1}\cdot\frac{1} {|\la_{m'+m_0-1}|} .
\]
Adding by pairs the previous inequalities we get
\begin{equation} \label{diskprop3eq2}
\sigma_{m'}:= \thi^{(m')}_{m_0}-\thi_0>\frac{c_0}{2R_0c_1}\cdot\sum^{m_0-1}_{j=0}\frac{1} {|\la_{m'+j}|}.
\end{equation}
We also need the following inequality, so called Jordan's inequality:
\begin{equation} \label{diskprop3eq3}
\sin x>\frac{2}{\pi }x, \,\,\, x\in (0,\frac{\pi }{2} ).
\end{equation}
Since $r\geq r_0$, $(|\lambda_n|)_n$ is strictly increasing, $\theta_1<\theta_2$ and by (\ref{diskprop3eq1}),
(\ref{diskprop3eq2}), (\ref{diskprop3eq3}) and property (2.1) we get
$$ |w_2\lambda (w_2)-w_1\lambda (w_1)|= r|\lambda_{m'+j_0}||e^{2\pi i\theta_2}-e^{2\pi i\theta_1}|\geq
r_0|\lambda_{m'}||e^{2\pi i\theta_2}-e^{2\pi i\theta_1}|$$
$$=r_0|\lambda_{m'}|2\sin(\pi (\theta_2-\theta_1))>2r_0|\lambda_{m'}|\frac{2}{\pi}(\pi (\theta_2-\theta_1))
=4r_0|\lambda_{m'}|(\rho_2-\rho_1)\sigma_{m'}$$
$$\geq 4r_0|\lambda_{m'}| \sigma_{m'} >
 \frac{2r_0c_0}{R_0c_1}|\lambda_{m'}|\sum^{m_0-1}_{j=0}\frac{1}{|\la_{m'+j}|} >2c_0,$$ where the last inequality follows by
property (2.1). This finishes the proof for the Case 2 and hence that of the lemma.
\end{proof}

\begin{lem} \label{diskprop4}
Fix a positive integer $m$ with $m\geq n_0$.
$$\textrm{If} \,\,\, w_1,w_2\in P_m \,\,\, \textrm{with} \,\,\, w_1\neq w_2 \,\,\, \textrm{and} \,\,\,
\lambda (w_1)=\lambda (w_2) \,\,\, \textrm{then} \,\,\, B_{w_1}\cap B_{w_2}=\emptyset .$$
\end{lem}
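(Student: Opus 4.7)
Disjointness of $B_{w_1}$ and $B_{w_2}$ amounts to $|w_1\la(w_1)-w_2\la(w_2)|>2c_0$, which, with $\la:=\la(w_1)=\la(w_2)$, reads $|\la|\cdot|w_1-w_2|>2c_0$. By Lemma \ref{diskprop3} I may assume $|w_1|\neq|w_2|$, and I will deduce the required inequality from the crude bound $|w_1-w_2|\geq||w_1|-|w_2||$.

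The core of the plan is a combinatorial step: two non-boundary points of $P_m$ sharing the same value $\la(\cdot)=\la_N$ must lie in zero-order partitions of equal density inside one common $P_2^{R,M}$. Indeed, for a non-boundary $w$ the disk assignment sets $\la(w)=\la_{M+km_0+j}$ with $M=m+\nu k_0m_0$, $k\in\{0,\ldots,k_0-1\}$, $j\in\{0,\ldots,m_0-1\}$; since $k<k_0$ and $j<m_0$ the triple $(\nu,k,j)$ is uniquely recovered from $N$. Consequently $w_1,w_2$ share $\nu,k,j$, and the formula $\mu(R+\rho\mu_1(M),M,k)=R+\rho\mu_1(M)+\ssum_{s=1}^{k}c_2/|\la_{M+sm_0-1}|$ shows that their heights differ by a nonzero integer multiple of $\mu_1(M)$ (the $k$-dependent tail cancels). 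Therefore $||w_1|-|w_2||\geq\mu_1(M)$.

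The analytic finish is direct: property (2.4) at the index $M\geq n_0$ reads exactly $|\la_M|\mu_1(M)>2c_0$, and the eventual monotonicity of $(|\la_n|)$ together with $N\geq M$ gives $|\la_N|\geq|\la_M|$. Combining, $|\la|\cdot|w_1-w_2|\geq|\la_N|\mu_1(M)>2c_0$, as wanted.

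Boundary points $r_\nu^{(m)}e^{2\pi i\thi_0}$ (Case 1 in the disk construction) are handled by the same index-matching philosophy. A short arithmetic check shows that the only candidate partner with equal $\la$ is a non-boundary point on the ray $\thi=\thi_0$ of the preceding $P_2^{r_{\nu-1}^{(m)},M'}$, $M'=m+(\nu-1)k_0m_0$, sitting at $j=0$, $k=k_0-1$; the maximal-$\rho$ candidate of this form has height exactly $r_\nu^{(m)}$, which would make the two points coincide and is excluded by $w_1\neq w_2$. Hence any genuine partner has modulus at least $\mu_1(M')$ below $r_\nu^{(m)}$, and the same (2.4)-based estimate closes the argument. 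The main obstacle, as already in Lemma \ref{propertiesl4}, is the careful bookkeeping at the seams where a boundary point is shared by two consecutive $P_2$'s; once that accidental coincidence is eliminated, the rest is a uniform application of (2.4).
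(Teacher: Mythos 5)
Your proof is correct and takes essentially the same route as the paper's: reduce to $|w_1|\neq|w_2|$ via Lemma \ref{diskprop3}, use uniqueness of the index representation $N=m+\nu k_0m_0+km_0+j$ (together with strict monotonicity of $|\lambda_n|$ for $n\geq n_0$) to force the two points to share $(\nu,k,j)$ — this is exactly the paper's Claims 1, 2 and $j'=j''$ — then note that their moduli differ by a nonzero integer multiple of $\mu_1$ and close with property (2.4). The only minor slip is your assertion that a boundary point's sole equal-$\lambda$ partner lies on the ray $\theta=\theta_0$: such partners (with $j=0$, $k=k_0-1$) also occur at angles $\theta_0+\rho\sigma_{m''}$ off that ray, but those have the same modulus as $w_1$ and are already covered by your initial reduction, so nothing is lost (the paper itself does not even separate out the boundary case).
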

\begin{proof}
Fix $w_1$, $w_2$ satisfying the hypothesis of the lemma. If $|w_1|=|w_2|$ then by Lemma \ref{diskprop3} the
conclusion follows. So assume that $|w_1|\neq |w_2|$. By the definition of the partition $P_m$ there exist
$\nu_1, \nu_2\in \{ 0,\ldots , \nu_1^{(m)} \}$ such that
$$ w_1\in P_2^{r_{\nu_1}^{(m)},m+\nu_1k_0m_0},\,\,\,w_2\in P_2^{r_{\nu_2}^{(m)},m+\nu_2k_0m_0}.$$
{\bf Claim 1.} $\nu_1 =\nu_2$.

\emph{Proof of Claim 1}: We argue by contradiction, so assume that $\nu_1\neq \nu_2$. Without loss of generality
suppose that $\nu_1<\nu_2$. By the definition of the partition of order $2$ we have
$$
P_2^{r_{\nu_1}^{(m)},m+\nu_1k_0m_0}=\bigcup_{\nu =0}^{\nu_0^{ r_{\nu_1}^{(m)},m+\nu_1k_0m_0 }}
P_1^{r_{\nu_1}^{(m)}+\nu \mu_1(m+\nu_1k_0m_0),m+\nu_1k_0m_0}
$$
and
$$
P_2^{r_{\nu_2}^{(m)},m+\nu_2k_0m_0}=\bigcup_{\nu =0}^{\nu_0^{ r_{\nu_2}^{(m)},m+\nu_2k_0m_0 }}
P_1^{r_{\nu_2}^{(m)}+\nu \mu_1(m+\nu_2k_0m_0),m+\nu_2k_0m_0}.
$$
Hence there exist $\nu'\in \left\{ 0, \ldots , \nu_0^{ r_{\nu_1}^{(m)},m+\nu_1k_0m_0 } \right\}$, $\nu'' \in
\left\{ 0,\ldots ,\nu_0^{ r_{\nu_2}^{(m)},m+\nu_2k_0m_0 } \right\}$ such that
\begin{equation} \label{claim1eq1}
w_1\in P_1^{r_{\nu_1}^{(m)}+\nu' \mu_1(m+\nu_1k_0m_0),m+\nu_1k_0m_0}
\end{equation}
and
\begin{equation} \label{claim1eq2}
w_2\in P_1^{r_{\nu_2}^{(m)}+\nu'' \mu_1(m+\nu_2k_0m_0),m+\nu_2k_0m_0}.
\end{equation}
Recall that, see Step 1, for every $r>0$ and every positive integer $m$ the partition $P_1^{r,m}$ is defined as
a union of partitions of order $0$ as follows:
$$P_1^{r,m}=\bigcup_{k=0}^{k_0-1}P_0^{\mu (r,m,k), m+km_0}.$$
Thus, by (\ref{claim1eq1}), (\ref{claim1eq2}), there exist $k_1,k_2\in \{ 0,\ldots ,k_0-1\}$ and $r_1,r_2$
positive numbers such that
$$w_1\in P_0^{r_1,m+\nu_1k_0m_0+k_1m_0},\,\,\, w_2\in P_0^{r_2, m+\nu_2k_0m_0+k_2m_0}.$$
From the last and the definition of $\lambda (w)$ for $w\in P_m$ we have
$$ \lambda (w_1)=\lambda_{n'+j'},\,\,\, \lambda (w_2)=\lambda_{n''+j''}, $$
where $n'=m+\nu_1k_0m_0+k_1m_0$, $n''=m+\nu_2k_0m_0+k_2m_0$ and $j',j''\in \{ 0,\ldots , m_0-1 \}$. Observe now
that
\begin{equation} \label{claim1eq3}
n'+l' <n''+l'' \,\,\, \textrm{for every} \,\,\, l',l''\in  \{ 0,\ldots , m_0-1 \} .
\end{equation}
By (\ref{claim1eq3}) and the fact that $(|\lambda_n |)$ is strictly increasing we arrive at
$$ |\lambda (w_1)|=|\lambda_{n'+j'}|<|\lambda_{n''+j''}|=|\lambda (w_2)| ,$$ which is a contradiction. This finishes
the proof of the Claim 1.

For simplicity reasons let us define
$$\nu:=\nu_1=\nu_2.$$
By the proof of Claim 1, we have that
$$
w_1\in P_0^{r_1,m'+k_1m_0},\,\,\, w_2\in P_0^{r_2, m'+k_2m_0}
$$
and
\begin{equation} \label{claimeq4}
\lambda_{m'+k_1m_0+j'}= \lambda (w_1)= \lambda (w_2)=\lambda_{m'+k_2m_0+j''},
\end{equation}
where $m':=m+\nu k_0m_0$, $r_1,r_2>0$ and $j',j''\in  \{ 0,\ldots , m_0-1 \}$.

{\bf Claim 2.} $k_1=k_2$.

\emph{Proof of Claim 2}: We argue by contradiction, so assume that $k_1\neq k_2$. Without loss of generality
assume that $k_1<k_2$. Then we have
$$ m'+k_1m_0+j'\leq m'+(k_1+1)m_0-1<m'+k_2m_0+j'' .$$ The last implies
that $ |\lambda_{m'+k_1m_0+j'}|< |\lambda_{m'+k_2m_0+j''}| $, which contradicts (\ref{claimeq4}).

Observe now that we also have $j'=j''$. Set $r':=r_{\nu }^{(m)}$, $j:=j'=j''$ and $k:=k_1=k_2$. Recall that
$$w_1,w_2\in P_2^{r',m'}.$$ By the proof of Claim 1 and the previous notations we immediately get the following
$$
w_1\in P_0^{\mu (r'+\nu'\mu_1(m'), m',k), m'+km_0},\,\,\,w_2\in P_0^{\mu (r'+\nu''\mu_1(m'), m',k), m'+km_0},
$$
for some $$\nu',\nu'' \in \left\{ 0, \ldots , \nu_0^{ r',m' } \right\}.$$ It is now clear that
\[
|w_1|=\mu (r'+\nu'\mu_1(m'), m',k)=r'+\nu'\mu_1(m')+\sum_{N=1}^k\frac{c_2}{|\lambda_{m'+Nm_0-1}|} ,
\] 
and

\[
|w_2|=\mu (r'+\nu''\mu_1(m'), m',k)=r'+\nu''\mu_1(m')+\sum_{N=1}^k\frac{c_2}{|\lambda_{m'+Nm_0-1}|} ,
\]
where we used the definition of $\mu (r,m,k)$ from Step $3$. It is immediate that
$$|\nu'-\nu''|\geq 1,$$ since $|w_1|\neq |w_2|$. We are ready for the final estimate. From the above we arrive at the following inequality
$$|w_1\lambda (w_1)-w_2\lambda (w_2)|\geq
|\lambda_{m'+km_0+j}|||w_1|-|w_2||=|\lambda_{m'+km_0+j}|\mu_1(m')|\nu'-\nu''|$$
$$\geq |\lambda_{m'}| \mu_1(m') =|\lambda_{m'}|\sum_{N=1}^{k_0}\frac{c_2}{|\lambda_{m'+Nm_0-1}|}>2c_0,$$
where the last inequality follows by property (2.4). This completes the proof of the lemma.

\end{proof}

\begin{lem}\label{diskprop6}
Let $m\ge n_0$, $m\in\N$, $r\in[r_0,R_0]$, $\thi'$, $\thi''\in[\thi_0,\thi_T]$ and $v_1<v_2$, where
$v_1\in\{m,m+1,\ld,m+m_0k_0-1\}$, $v_2\in \mathbb{N}$. Also, let $\e_1,\e_2$ be two non-negative real numbers
such that $0\le\e_2<\e_1<\dfrac{c_4}{m}$. We consider the numbers $r_1:=r+\e_1$ and $r_2:=r+\e_2$ and define the
discs $B(1):=B+r_1e^{2\pi i\thi'}\la_{v_1}$, $B(2):=B+r_2e^{2\pi i\thi''}\la_{v_2}$. Then $B(1)\cap
B(2)=\emptyset$.
\end{lem}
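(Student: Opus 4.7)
\bigskip
\noindent\emph{Proof plan.} Since the two disks $B(1)$ and $B(2)$ are closed disks of common radius $c_0$ centered at $r_1e^{2\pi i\theta'}\lambda_{v_1}$ and $r_2e^{2\pi i\theta''}\lambda_{v_2}$ respectively, it suffices to show that the distance between these centers strictly exceeds $2c_0$. The plan is to bound this distance from below by $r_2|\lambda_{v_2}|-r_1|\lambda_{v_1}|$ via the reverse triangle inequality, and then to exploit conditions (2.5), (2.6), (2.7) on the sequence $(\lambda_n)$, together with $v_1\ge m\ge n_0$ and $v_1<m+m_0k_0$, to force this quantity above $2c_0$.

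\smallskip
\noindent\textbf{Main estimate.} First I would write
\[
r_2|\lambda_{v_2}|-r_1|\lambda_{v_1}|
=r\bigl(|\lambda_{v_2}|-|\lambda_{v_1}|\bigr)+\varepsilon_2|\lambda_{v_2}|-\varepsilon_1|\lambda_{v_1}|
\ge r\bigl(|\lambda_{v_2}|-|\lambda_{v_1}|\bigr)-\varepsilon_1|\lambda_{v_1}|,
\]
since $\varepsilon_2|\lambda_{v_2}|\ge 0$. Since $v_2\ge v_1+1\ge n_0+1$, property (2.5) yields
$|\lambda_{v_1+1}|-|\lambda_{v_1}|>2c_3|\lambda_{v_1}|/v_1$, and monotonicity of $(|\lambda_n|)$ (forced by (2.2)) gives
$|\lambda_{v_2}|-|\lambda_{v_1}|\ge|\lambda_{v_1+1}|-|\lambda_{v_1}|>2c_3|\lambda_{v_1}|/v_1$.
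Combined with $r\ge r_0$ and $r_0c_3=2c_4$, this produces
$r(|\lambda_{v_2}|-|\lambda_{v_1}|)>4c_4|\lambda_{v_1}|/v_1$. Using $\varepsilon_1<c_4/m$, I also have $\varepsilon_1|\lambda_{v_1}|<c_4|\lambda_{v_1}|/m$.

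\smallskip
\noindent\textbf{Ratio bookkeeping.} The key comparison is then between $4c_4|\lambda_{v_1}|/v_1$ and $c_4|\lambda_{v_1}|/m$. Here the inequality $v_1\le m+m_0k_0-1$ together with property (2.6), applied at $n=m\ge n_0$, gives $m_0k_0<m$, so $v_1<2m$ and hence $4/v_1>2/m$. Consequently
\[
r\bigl(|\lambda_{v_2}|-|\lambda_{v_1}|\bigr)-\varepsilon_1|\lambda_{v_1}|
>\frac{4c_4|\lambda_{v_1}|}{v_1}-\frac{c_4|\lambda_{v_1}|}{m}
>\frac{2c_4|\lambda_{v_1}|}{m}-\frac{c_4|\lambda_{v_1}|}{m}
=\frac{c_4|\lambda_{v_1}|}{m}.
\]
Finally, property (2.7) with $n=v_1\ge m\ge n_0$ gives $2c_0<c_4|\lambda_{v_1}|/v_1\le c_4|\lambda_{v_1}|/m$, so the right-hand side strictly exceeds $2c_0$.

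\smallskip
\noindent\textbf{Conclusion.} Putting the pieces together, $r_2|\lambda_{v_2}|-r_1|\lambda_{v_1}|>2c_0>0$, and therefore by the reverse triangle inequality
\[
\bigl|r_1e^{2\pi i\theta'}\lambda_{v_1}-r_2e^{2\pi i\theta''}\lambda_{v_2}\bigr|
\ge r_2|\lambda_{v_2}|-r_1|\lambda_{v_1}|>2c_0,
\]
which gives $B(1)\cap B(2)=\emptyset$. The argument is essentially a mechanical chain of inequalities; the only mildly delicate point is the bookkeeping that forces $v_1<2m$, which is precisely why condition (2.6) was singled out in subsection~\ref{good}. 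No Runge-type ingredient or geometry of the sector enters: the whole statement is purely radial, because $v_1<v_2$ already opens a sufficient gap in the moduli $|\lambda_{v_j}|$.
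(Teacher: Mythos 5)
Your proof is correct and follows essentially the same route as the paper's: both reduce the statement to the radial estimate $r_2|\la_{v_2}|-r_1|\la_{v_1}|>2c_0$ and derive it from properties (2.5)--(2.7) together with the bound $v_1<2m$ coming from (2.6) and the relation $c_4=r_0c_3/2$. The only cosmetic difference is that you discard $\e_2|\la_{v_2}|\ge 0$ at the outset, while the paper keeps $\e_2$ and only drops the term $\e_2\big(|\la_{v_2}|/|\la_{v_1}|-1\big)$ at the end; both versions close the same chain of inequalities.
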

\begin{proof}
By property (2.6) we have $$\frac{m}{m+k_0m_0}>\frac{1}{2}$$ or equivalently
\begin{equation} \label{diskprop6-1}
m+k_0m_0<2m .
\end{equation}
We also have
\begin{equation} \label{diskprop6-2}
m\leq v_1 \leq m+m_0k_0-1 ,
\end{equation}
by our hypothesis. Hence, by (\ref{diskprop6-1}), (\ref{diskprop6-2}) it follows that
\begin{equation} \label{diskprop6-3}
v_1<2m.
\end{equation}
Combining (\ref{diskprop6-3}) with the definition of $c_4$ we get
\begin{equation} \label{diskprop6-4}
\frac{c_4}{m}<\frac{r_0c_3}{v_1}.
\end{equation}
By (\ref{diskprop6-4}) and our hypothesis we arrive at the following inequality
$$\e_1-\e_2<\frac{r_0c_3}{v_1}$$ or equivalently
\begin{equation} \label{diskprop6-5}
2r_0c_3-v_1(\e_1-\e_2)>r_0c_3.
\end{equation}
Since $v_1<v_2$ and $(|\lambda_n|)$ is strictly increasing we have
$$
v_1 \left( \left| \frac{\lambda_{v_2}}{\lambda_{v_1}} \right| -1\right) \geq v_1 \left( \left|
\frac{\lambda_{v_1+1}}{\lambda_{v_1}} \right| -1\right)
$$
and in view of property (2.5) (recall that $v_1\geq m\geq n_0$)
\begin{equation} \label{diskprop6-6}
rv_1 \left( \left| \frac{\lambda_{v_2}}{\lambda_{v_1}} \right| -1\right) >2rc_3\geq 2r_0c_3.
\end{equation}
By (\ref{diskprop6-5}), (\ref{diskprop6-6}) we get
\begin{equation} \label{diskprop6-7}
rv_1 \left( \left| \frac{\lambda_{v_2}}{\lambda_{v_1}} \right|
-1\right)-v_1(\e_1-\e_2)>2r_0c_3-v_1(\e_1-\e_2)>r_0c_3.
\end{equation}
Since $c_4:=r_0c_3/2$, inequality (\ref{diskprop6-7}) combined with property (2.7) gives
$$
rv_1 \left( \left| \frac{\lambda_{v_2}}{\lambda_{v_1}} \right|
-1\right)-v_1(\e_1-\e_2)>\frac{v_1}{|\lambda_{v_1}|}2c_0,
$$
or equivalently
$$
r \left( \left| \frac{\lambda_{v_2}}{\lambda_{v_1}} \right| -1\right)-(\e_1-\e_2)>\frac{1}{|\lambda_{v_1}|}2c_0.
$$
Adding on the left hand side of the previous inequality the positive term
$$
\e_2(\frac{|\lambda_{v_2}|}{|\lambda_{v_1}|}-1),
$$
we get that
$$
(r+\e_2) \left( \left| \frac{\lambda_{v_2}}{\lambda_{v_1}} \right|
-1\right)-(\e_1-\e_2)>\frac{1}{|\lambda_{v_1}|}2c_0.
$$
Multiplying both sides of the above inequality by $|\lambda_{v_1}|$ we arrive at
$$
(r+\e_2)|\lambda_{v_2}|-(r+\e_1)|\lambda_{v_1}|>2c_0,
$$
which implies that the disks $B(1)$, $B(2)$ are disjoint.
\end{proof}

\begin{lem} \label{diskprop7}
Fix a positive integer $m$ with $m\geq n_0$. Then the family $\mathcal{B}_m$, defined by
$$ \mathcal{B}_m:=\{ B_w| w\in P_m \} \cup \{ B\} ,$$
consists of pairwise disjoint disks.
\end{lem}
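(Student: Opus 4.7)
The plan is a four-case analysis. Pairs of the form $(B, B_w)$ are immediate from Lemma~\ref{diskprop1}. So fix two distinct $w_1, w_2 \in P_m$ and write $\lambda(w_i) = \lambda_{n_i}$. If $|w_1| = |w_2|$ then Lemma~\ref{diskprop3} applies at once. Otherwise assume $|w_1| < |w_2|$. Since $(|\lambda_n|)$ is strictly increasing, comparing $n_1$ and $n_2$ yields three subcases: $n_1 < n_2$ (apply Lemma~\ref{diskprop2}); $n_1 = n_2$, i.e.\ $\lambda(w_1) = \lambda(w_2)$ (apply Lemma~\ref{diskprop4}); and the ``crossed'' subcase $n_1 > n_2$, which is the one that requires real work.

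For the crossed subcase, the plan is to reduce to a single partition $P_2^{r', m'}$ and then invoke Lemma~\ref{diskprop6}. Locate each $w_i$ in some $P_2^{r_{\nu_i}^{(m)}, m + \nu_i k_0 m_0}$; the inequality $|w_1| < |w_2|$ forces $\nu_1 \leq \nu_2$. I will rule out $\nu_2 - \nu_1 \geq 2$, since then Lemma~\ref{propertiesl3} together with the explicit ranges of the indices $n$ produced by the disk-construction rules would force $n_1 < n_2$, contrary to the crossed hypothesis. When $\nu_2 = \nu_1 + 1$, a direct index comparison shows that $n_1 > n_2$ can occur only if $w_2$ is the shared boundary point $r_{\nu_1+1}^{(m)} e^{2\pi i \theta_0}$; Lemma~\ref{propertiesl4} then places $w_2$ also in $P_2^{r_{\nu_1}^{(m)}, m + \nu_1 k_0 m_0}$. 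Either way, both $w_1$ and $w_2$ belong to $P_2^{r', m'}$ with $r' := r_{\nu_1}^{(m)}$ and $m' := m + \nu_1 k_0 m_0$. A short index check also rules out $w_1$ being the lower boundary $r_{\nu_1}^{(m)} e^{2\pi i \theta_0}$ (there $n_1$ is minimal in the admissible range and cannot exceed any $n_2$ actually attained), so both $n_1$ and $n_2$ lie in $\{m', m'+1, \ldots, m' + m_0 k_0 - 1\}$.

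With this setup I apply Lemma~\ref{diskprop6} taking $m \mapsto m'$, $r := r'$, $\epsilon_1 := |w_2| - r'$, $\epsilon_2 := |w_1| - r'$, $v_1 := n_2$, $v_2 := n_1$, and $\theta', \theta''$ the normalized arguments of $w_2, w_1$. Then $0 \leq \epsilon_2 < \epsilon_1$ and $v_1 < v_2$ by the crossed hypothesis, while $v_1 \in \{m', \ldots, m' + m_0 k_0 - 1\}$ was checked above. The delicate point is the bound $\epsilon_1 < c_4/m'$: property (2.8) ensures that for $m' \geq n_0$ one is in Case~2 of Step~4 of the partition construction, whose stopping rule forces $|w| < r' + c_4/m'$ for every $w \in P_2^{r', m'}$. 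Lemma~\ref{diskprop6} then gives $B_{w_1} \cap B_{w_2} = \emptyset$, completing the proof. The main obstacle is precisely this final crossed subcase---pinning down the common $P_2^{r', m'}$ that hosts both $w_1$ and $w_2$ and then checking the hypotheses of Lemma~\ref{diskprop6}, especially the bound $\epsilon_1 < c_4/m'$ which is an immediate consequence of the Step~4 construction once one verifies via (2.8) that the stopping rule is in force.
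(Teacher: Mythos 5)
Your proposal is correct and follows essentially the same route as the paper: dispose of $(B,B_w)$ via Lemma \ref{diskprop1}, equal moduli via Lemma \ref{diskprop3}, equal $\lambda$-values via Lemma \ref{diskprop4}, the ordered case via Lemma \ref{diskprop2}, and reduce the remaining ``crossed'' case to a single order-$2$ partition so that Lemma \ref{diskprop6} applies with $\epsilon_1<c_4/m'$ coming from the Step 4 stopping rule. Your treatment is in fact slightly more explicit than the paper's at the one delicate spot (the shared boundary point $r_{\nu_1+1}^{(m)}e^{2\pi i\theta_0}$, handled via Lemma \ref{propertiesl4}, and the verification through (2.8) that Case 2 of Step 4 is in force), where the paper simply cites the proof of Claim 1 of Lemma \ref{diskprop4} and Lemma \ref{propertiesl3}.
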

\begin{proof}
By Lemma \ref{diskprop1} we have that $B\cap B_w=\emptyset$ for every $w\in P_m$. So let us fix $w_1,w_2\in P_m$
with $w_1\neq w_2$. We have to show that $B_{w_1}\cap B_{w_2}=\emptyset$. If $|w_1|=|w_2|$ by Lemma
\ref{diskprop3} the conclusion follows. So, let us assume that $|w_1|\neq |w_2|$. Now we look at $\lambda
(w_1)$, $\lambda (w_2)$. If $|\lambda (w_1)|=|\lambda (w_2)|$, and keeping in mind that
$|\lambda_n|=|\lambda_{n'}|$ if and only if $\lambda_n=\lambda_{n'}$, then by Lemma \ref{diskprop4} the
corresponding disks $B_{w_1}$, $B_{w_2}$ are disjoint. It remains to deal with the case $|\lambda (w_1)|\neq
|\lambda (w_2)|$. Without loss of generality assume that $|w_1|<|w_2|$. We shall consider the following two
cases.

{\bf Case 1.} $|\lambda (w_1)|<|\lambda (w_2)|$. Then by Lemma \ref{diskprop2} we conclude that $B_{w_1}\cap
B_{w_2}=\emptyset$.

{\bf Case 2.} $|\lambda (w_1)|>|\lambda (w_2)|$. By the definition of partition $P_m$ we have that $P_m$ is a
union of partitions of order $2$, so there exist pairs $(r_1,m_1)$, $(r_2,m_2)$ for certain $r_1,r_2>0$ and
$m_1,m_2$ positive integers such that $w_1\in P_2^{r_1,m_1}$ and $w_2\in P_2^{r_2,m_2}$. If $(r_1,m_1)\neq
(r_2,m_2)$, by the proof of Claim 1 in Lemma \ref{diskprop4} and Lemma \ref{propertiesl3} it follows that
$|\lambda (w_1)|<|\lambda (w_2)|$, which is a contradiction. Therefore $w_1$, $w_2$ belong to the same partition
of order $2$, say $P_2^{r',m'}$. In order to apply Lemma \ref{diskprop6} we introduce the following "strange"
notation:
$$ r_1:= |w_2|,\,\,\, r_2:=|w_1|.$$
Since $w_1\in P_2^{r',m'}$, we have that $r_1=|w_2|=r'+\e_1$ for some $0\leq \e_1< +\infty$. By a similar
reasoning we have that $r_2=|w_1|=r'+ \e_2$ for some positive number $\e_2$. Observe that $\e_1>0$ because
$|w_1|<|w_2|$. Recall that $|w|<r'+\frac{c_4}{m'}$ for every $w\in P_2^{r',m'}$, see Step 4. On the other hand
$$M^{r',m'}:=\max \{ |w||w\in P_2^{r',m'} \} ,$$ by Step 5. Hence, we get
$$ |w_1|=r_2=r'+\e_2\leq M^{r',m'}<r'+\frac{c_4}{m'}$$
and
$$ |w_2|=r_1=r'+\e_1\leq M^{r',m'}<r'+\frac{c_4}{m'},$$
from which it follows that
\begin{equation} \label{diskprop7-1}
 \e_1< \frac{c_4}{m'}.
\end{equation}
The inequality $|w_1|<|w_2|$ implies that $\e_2<\e_1$. From the last and (\ref{diskprop7-1}) we conclude that
$$0<\e_1-\e_2\leq \e_1<\frac{c_4}{m'}.$$
We also have
$$\lambda (w_1)=\lambda_{v_2} \,\,\, ,\lambda (w_1)=\lambda_{v_1},$$
for some positive integers $v_1,v_2$ with $v_1,v_2\geq m'$, $v_1\leq m'+k_0m_0-1$ and $v_1<v_2$. Since
$w_2=r_1e^{2\pi i\theta'}$, $w_1=r_2e^{2\pi i\theta''}$ for some $\theta', \theta''\in [\theta_0 ,\theta_T)$, we
apply Lemma \ref{diskprop6} and the desired result follows. This completes the proof of the lemma.
\end{proof}

\section{Proof of Lemma \ref{lem3}}\label{sec5}
\noindent

Let some fixed $j_1,s_1,k_1\in\N$. We will prove that the set $\bigcup\limits^\infty_{m=1}E(m,j_1,s_1,k_1)$ is
dense in $(\ch(\C),\ct_u)$.

For simplicity we write $p_{j_1}=p$. Consider fixed $g\in\ch(\C)$, a compact set $C\subseteq\C$ and $\e_0>0$. We
seek $f\in\ch(\C)$ and a positive integer $m_1$ such that

\begin{equation}\label{sec5eq1}
f\in E(m_1,j_1,s_1,k_1)
\end{equation}
and
\begin{equation} \label{sec5eq2}
\sup_{z\in C}|f(z)-g(z)|<\e_0.
\end{equation}

Fix $R_1>0$ sufficiently large so that
$$
C\cup\{z\in\C|\,|z|\le k_1\}\subset\{z\in\C|\,|z|\le R_1\}.
$$
Choose $0<\de_0<1$ such that
\begin{equation} \label{sec5eq3}
\textrm{if} \,\,\, |z|\le R_1 \,\,\, \textrm{and} \,\,\, |z-w|<\de_0 \,\,\, \textrm{then} \,\,\,
|p(z)-p(w)|<\frac{1}{2s_1}.
\end{equation}

We set $$B:=\{z\in\C|\,|z|\le R_1+\de_0\},$$  $$c_0:=R_1+\de_0,$$ $$c_1:=\dfrac{4\pi(R_1+\de_0)}{\de_0} ,$$
$$c_2:=\dfrac{\de_0}{2R_0},$$ $$m_0:=\Big[\dfrac{R_0}{r_0}c_1\Big]+1,$$ $$k_0:=\Big[\dfrac{2(R_1+\de_0)}{c_2}\Big]+1,$$
$$c_3 \,\,\, \textrm{any, fixed, positive number}$$ and $$c_4:=\dfrac{r_0c_3}{2}.$$

Fix a natural number $n_0$ such that all properties (2.1)-(2.8) hold for every $n\geq n_0$ with respect to the
above fixed quantities. Let us also fix a positive integer $m\ge n_0$. After that, on the basis of the fixed
numbers $r_0,R_0,\thi_0,\thi_T,c_0,c_1,c_2,c_3,k_0,m_0$ and the natural number $m$ we define the set $L_m$ as
follows:
$$L_m:=B\cup \left( \bigcup_{w\in P_m}B_w \right) ,$$
where the discs $B_w$, $w\in P_m$ are constructed in section 4.1. By Lemma \ref{diskprop7}, the disks in the
family $\mathcal{B}_m$ are pairwise disjoint. Therefore the compact set $L_m$ has connected complement. This
property is needed in order to apply later Meregelyan's theorem. We now define the function $h$ on the compact
set $L_m$ by,
\[
h(z)=\left\{\begin{array}{cc}
              g(z), & z\in B \\
              p(z-w\lambda (w)), & z\in B_w ,w\in P_m.
            \end{array}\right.
\]
By Mergelyan's theorem \cite{12} there exists an entire function $f$ (in fact a polynomial) such that
\begin{eqnarray}
\sup_{z\in L_m}|f(z)-h(z)|<\min\bigg\{\frac{1}{2s_1},\e_0\bigg\}.  \label{sec5eq4}
\end{eqnarray}
By the definition of $h$ and (\ref{sec5eq4}) it follows that

\begin{align}
\sup_{z\in C}|f(z)-g(z)|&\le\sup_{z\in B}|f(z)-g(z)|=\sup_{z\in B}
|f(z)-h(z)|\nonumber \\
&\le\sup_{z\in L_m}|f(z)-h(z)|< \e_0  \label{sec5eq5}
\end{align}
which implies the desired inequality (\ref{sec5eq2}).

It remains to show (\ref{sec5eq1}).

Let $a\in S$. We can write $a=re^{2\pi i\theta }$ for some $r\in[r_0,R_0]$ and $t\in[\thi_0,\thi_T]$. Since
$P_m=\cup P_0^{r',m'}$, consider all the $r'$ that appear in the previous union and order them as follows:
$r_0<r_1<\cdots <r_{N}\leq R_0$ for some $N\in \mathbb{N}$. Then either there exists unique $\nu \in \{
0,1,\ldots , N-1\}$ such that $r_{\nu} \leq r<r_{\nu +1}$ or $r_N \leq r \leq R_0$. Define
$$r_1:=r_{\nu }, \,\,\, r_2:=r_{\nu +1},\,\,\, \textrm{if} \,\,\,r_{\nu} \leq r<r_{\nu +1},$$
and
$$r_1:=r_N, \,\,\, r_2:=R_0,\,\,\, \textrm{if} \,\,\,r_N \leq r \leq R_0 .$$
Observe that in either case we have $r_1\leq r\leq r_2$.

Consider now all the partitions with height $r_1$ and order $0$, $P_0^{r_1,m'}$, that appear in $P_m$. By the
construction of $P_m$ either there exists a unique $m'$ so that the partition $P_0^{r_1,m'}$ appears in $P_m$,
in other words there exists a unique partition of order $0$ with height $r_1$, or there exist exactly two
different partitions of order $0$ and height $r_1$, say $P_0^{r_1,m'}$, $P_0^{r_1,m''}$. In the latter case we
consider the partition with the biggest density, for which we use again the symbol $P_0^{r_1,m'}$.

In the above paragraph we fixed a partition of order $0$ and height $r_1$, $P_0^{r_1,m'}$. The positive integer
$m'$ reflects the density of the partition and recall that, see Step 1,
$$\Delta_{m'}:= \{ \theta_0^{(m')}, \theta_1^{(m')}, \ldots , \theta_{\nu_{m'}}^{(m')} \}.$$ It now follows that
either there exists a unique $j\in \{ 1,2,\ldots , \nu_{m'}-1\}$ such that
$$ \theta_j^{(m')}\leq \theta <\theta_{j+1}^{(m')}$$ or
$$\theta_{\nu_{m'}}^{(m')}\leq \theta \leq \theta_T.$$ Then we define
$$
\theta_1:=\theta_j^{(m')}, \,\,\, \theta_2:=\theta_{j+1}^{(m')}, \,\,\,\textrm{if} \,\,\, \theta_j^{(m')}\leq
\theta <\theta_{j+1}^{(m')}
$$
and
$$
\theta_1:=\theta_{\nu_{m'}}^{(m')}, \,\,\, \theta_2:=\theta_T, \,\,\, \textrm{if} \,\,\,
\theta_{\nu_{m'}}^{(m')}\leq \theta \leq \theta_T.
$$

Let us also define
$$w_0:=r_1e^{2\pi i\theta_1} \in P_m .$$
We will prove now that for every $z\in\C$ with $|z|\le R_1$ we have $z+a\la (w_0)\in B_{w_0}$. Recall that
$B_{w_0}:=B+w_0\lambda (w_0)$. We have $B_{w_0}=\overline{D}(w_0\lambda (w_0),R_1+\de_0)$. Thus, it suffices to
prove that
\begin{eqnarray}
|(z+a\la (w_0))-w_0\lambda (w_0)|<R_1+\de_0, \ \ \text{for} \ \ |z|\le R_1.  \label{sec5eq6}
\end{eqnarray}
For $|z|\le R_1$ we have
\begin{eqnarray}
|(z+a\la (w_0)-w_0\lambda (w_0)|\le R_1+|\la (w_0)| |re^{2\pi i\theta }-r_1e^{2\pi i\thi_1}|. \label{sec5eq7}
\end{eqnarray}
By (\ref{sec5eq7}), in order to prove (\ref{sec5eq6}) it suffices to prove that
\begin{eqnarray}
|\la (w_0)| |re^{2\pi i\thi}-r_1e^{2\pi i\thi_1}|<\de_0.  \label{sec5eq8}
\end{eqnarray}
We have now:
\begin{align*}
|re^{2\pi i\thi}-r_1e^{2\pi i\thi_1}|&\le|r_1-r_2|+R_0|e^{2\pi i\thi_1}-e^{2\pi i\thi_2}| \\
&\le\frac{\de_0}{2R_0}\cdot\frac{1}{|\la (w_0)|}+R_02\sin(\pi(\thi_2-\thi_1))  \\ 
&<\frac{\de_0}{2R_0}\cdot\frac{1}{|\la (w_0)|}+2R_0\pi(\thi_2-\thi_1) \\
&<\frac{\de_0}{2R_0}\cdot\frac{1}{|\la (w_0)|}+2\pi R_0\cdot\frac{\de_0}{4\pi R_0}\cdot\frac{1}{|\la
(w_0)|}=\frac{\de_0}{2|\la (w_0)|}\bigg(\frac{1}{R_0}+1\bigg).
\end{align*}

%
So

\[
|\la (w_0)|\,|re^{2\pi i\thi}-r_1e^{2\pi i\thi_1}|<\frac{\de_0}{2}\bigg( \frac{1}{R_0}+1\bigg)<\de_0,
\]
because $R_0>1$, which implies (\ref{sec5eq8}). For $z$ with $|z|\leq R_1$ we have
\begin{align}
|f(z+a\la (w_0))-p(z)|\le&|f(z+a\la (w_0))
-p(z+\la (w_0)(re^{2\pi i\thi}-r_1e^{2\pi i\thi_1}))|\nonumber \\
&+|p(z+\la (w_0)(re^{2\pi i\thi}-r_1e^{2\pi i\thi_1}))-p(z)|.  \label{sec5eq9}
\end{align}
Previously, we proved that for every $|z|\le R_1$ we have $z+a\la (w_0)\in B_{w_0}$. Thus, by the definition of
$h$ and (\ref{sec5eq4}) we have
\begin{equation} \label{sec5eq10}
|f(z+a\la (w_0))-p(z+\la (w_0)(re^{2\pi i\thi}-r_1e^{2\pi i\thi_1}))|<\frac{1}{2s_1}.
\end{equation}
By (\ref{sec5eq8}) and (\ref{sec5eq3}) for $|z|\leq R_1$ we have
\begin{equation} \label{sec5eq11}
|p(z+\la (w_0)(re^{2\pi i\thi}-r_1e^{2\pi i\thi_1}))-p(z)|<\frac{1}{2s_1}.
\end{equation}
By (\ref{sec5eq9}), (\ref{sec5eq10}) and (\ref{sec5eq11}) we get
\[
\sup_{|z|\le R_1}|f(z+a\la (w_0))-p(z)|<\frac{1}{s_1}.
\]
So
\begin{equation} \label{sec5eq12}
\sup_{|z|\le k_1}|f(z+a\la (w_0))-p(z)|<\frac{1}{s_1}.
\end{equation}
Setting
$$
m_1:=\max \{ n\in \mathbb{N}: \lambda_n=\lambda (w) \,\,\, \textrm{for some} \,\,\, w\in P_m \} .
$$
we have that:

for every $a\in S$ there exists $w_0\in P_m$ such that $\lambda (w_0)=\lambda_n$ for some $n\in \mathbb{N}$ with
$n\leq m_1$ and (\ref{sec5eq12}) holds. Clearly the last implies that $f\in E(m_1,j_1,s_1,k_1)$, (\ref{sec5eq1})
holds and the proof of Lemma \ref{lem3} is complete. \qb\medskip

\section{Proof of Lemma \ref{lem1}}

By Mergelyan's theorem it easily follows that
\[
U:=\bigcap^\infty_{j=1}\bigcap^\infty_{s=1}\bigcap^\infty_{k=1}\bigcup^\infty_{m=1}
E(m,j,s,k)\subseteq\bigcap_{a\in S}HC(\{ T_{\la_na}\} ).
\]
We have to show the reverse inclusion. For every polynomial $p$ of one complex variable with coefficients in
$\Q+i\Q$ define the set
\[
\cu(p):=
\]
\[
\Big\{f\in\ch(\C)|\,\forall \,\, a\in S \,\, \exists \,\, (m_n) \subset \mathbb{N}: \,\forall  r>0\,\,
\dis\lim_{n\to +\infty}\sup_{|z|\leq  r}|f(z+\la_{m_n}a)-p(z)|=0  \Big\} .
\]
Let $p_j$, $j=1,2,\ld$ be an enumeration of all polynomials of one complex variable with coefficients in
$\Q+i\Q$. We see easily that

\begin{eqnarray}
\bigcap_{a\in S}HC(\{ T_{\la_na}\} )=\bigcap^\infty_{j=1}\cu(p_j).  \label{sec6eq1}
\end{eqnarray}
For $x>0$ and $n,j\in\N$ define the set

$$V(x,n,j):=$$
$$
\Big\{f\in\ch(\C)|\,\, \forall a\in S \,\,\exists \,\, m\in\N, m\le n \,\,\textrm{with}\,\, \dis\sup_{|z|\le
x}|f(z+\la_ma)-p_j(z)|<\frac{1}{x}\Big\}.
$$
We shall show that the following holds:
\begin{eqnarray}
\cu(p_j)\subseteq\bigcap_{x>0}\bigcup^\infty_{n=1}V(x,n,j).  \label{sec6eq2}
\end{eqnarray}
Let $f\in\ch(\C)$, $x_0>0$, $j_0,m_0\in\N$ and consider the set
\[
V_f(j_0,x_0,m_0):=\bigg\{a\in S|\,\,\sup_{|z|\le x_0}|f(z+\la_{m_0}a)-p_{j_0}(z)|<\frac{1}{x_0}\bigg\}.
\]
We first show that $V_f(j_0,x_0,m_0)$ is open in $S$. Let $a_0\in V_f(j_0,x_0,m_0)$ and take $(a_{\nu })$ a
sequence in $S$ such that $a_{\nu }\ra a_0$. We have
\begin{align}
\sup_{|z|\le x_0}|f(z+\la_{m_0}a_{\nu })-p_{j_0}(z)|\le&\sup_{|z|\le x_0}
|f(z+\la_{m_0}a_0)-p_{j_0}(z)| \nonumber \\
&+\sup_{|z|\le x_0}|f(z+\la_{m_0}a_{\nu })-f(z+\la_{m_0}a_0)|  \label{sec6eq3}
\end{align}
for every $\nu=1,2,\ld\,$.

The function $\f:S\times \overline{D(0,x_0)}\ra\C$ defined by $\f(a,z)=z+\la_{m_0}a$ is continuous, where the
set $S\times\overline{D(0,x_0)}$ is endowed with the product topology,
\begin{align*}
&\rho:(S\times\overline{D(0,x_0)})\times(S\times\overline{D(0,x_0)})\ra\R^+\;\;
\rho((\bi,z_1),(\ga,z_2))\\
&=\sqrt{|\bi-\ga|^2+|z_1-z_2|^2},\; \bi,\ga\in S, \;z_1,z_2\in \overline{D(0,x_0)}.
\end{align*}
Setting
\[
\e_0:=\frac{1}{x_0}-\sup_{|z|\le x_0}|f(z+\la_{m_0}a_0)-p_{j_0}(z)|,
\]
we observe that $\e_0>0$ since $a_0\in V_f(j_0,x_0,m_0)$. By the uniform continuity of $f\circ\f$ on
$S\times\overline{D(0,x_0)}$, there exists $\de_0>0$ such that for each $x,y\in S\times\overline{D(0,x_0)}$,
$\rho(x,y)<\de_0$ it holds $|(f\circ\f)(x)-(f\circ\f)(y)|<\e_0$. Since $a_{\nu}\ra a_0$, there exists $n_0\in\N$
such that $|a_{\nu}-a_0|<\de_0$ for each $\nu \in\N$, $\nu\ge n_0$. Now for every $z\in\overline{D(0,x_0)}$ and
$\nu\ge n_0$, $\nu\in\N$, we have:
\[
\rho((a_{\nu },z),(a_0,z))=\sqrt{|a_{\nu }-a_0|^2+|z-z|^2}=|a_{\nu }-a_0|<\de_0.
\]
So
\[
|(f\circ\f)(a_{\nu },z)-(f\circ\f)(a_0,z)|<\e_0 , \,\,\, \nu\geq n_0
\]
which in turn implies
\begin{equation}
\sup_{|z|\le x_0}|f(z+\la_{m_0}a_0)-p_{j_0}(z)| +\sup_{|z|\le x_0}|f(z+\la_{m_0}a_{\nu
})-f(z+\la_{m_0}a_0)|<\frac{1}{x_0} \label{sec6eq4}
\end{equation}
for $\nu \geq n_0$. In view of (\ref{sec6eq3}) and (\ref{sec6eq4}) there exists $n_0\in\N$ such that for every
$\nu \ge n_0$, $a_{\nu }\in V_f(j_0,x_0,m_0)$. From the last we conclude that the set $V_f(j_0,x_0,m_0)$ is
open.

Thus, for every $f\in\ch(\C),j,m\in\N$ and every $x>0$ the set $V_f(j,x,m)$ is open in $S$.

Take $g\in\cu(p_{j_0})$. Then for each $a\in S$ there exists a subsequence $(\la_{m_n(a)})$ of $(\la_n)$ (that
depends on $a$) such that for every $r>0$
\[
\sup_{|z|\leq r}|g(z+\la_{m_n(a)}a)-p_{j_0}(z)|\ra0 \ \ \text{as} \ \ n\ra+\infty.
\]
In particular we get
\[
\sup_{|z|\le x_0}|g(z+\la_{m_n(a)}a)-p_{j_0}(z)|\ra0 \ \ \text{as} \ \ n\ra+\infty.
\]
Thus, for $\e=\frac{1}{x_0}$ we have that for every $a\in S$ there exists $n_a\in\N$ (that depends on $a$) so
that for each $n\ge n_a$, $n\in\N$, it holds
\[
\sup_{|z|\le x_0}|g(z+\la_{m_n(a)}a)-p_{j_0}(z)|<\frac{1}{x_0}.
\]
Therefore, the set
\[
\cn(j_0,x_0,g):=\bigg\{n\in\N|\exists\;a\in S:\sup_{|z|\le x_0}|g(z+\la_na) -p_{j_0}(z)|<\frac{1}{x_0}\bigg\}
\]
is non-empty. It is obvious by the above definitions that
\begin{eqnarray}
V_g(j_0,x_0,m)\subset S \ \ \text{for each} \ \ m\in\N.  \label{sec6eq5}
\end{eqnarray}
Let some $a\in S$. Then there exists $n\in\cn(j_0,x_0,g)$ such that $a\in V_g(j_0,x_0,n)$. Hence we get
\begin{eqnarray}
S\subseteq\bigcup_{n\in\cn(j_0,x_0,g)}V_g(j_0,x_0,n).  \label{sec6eq6}
\end{eqnarray}
Now, (\ref{sec6eq5}) and (\ref{sec6eq6}) imply
\[
S=\bigcup_{n\in\cn(j_0,x_0,g)}V_g(j_0,x_0,n),
\]
so the family $V_g(j_0,x_0,n)$, $n\in\cn(j_0,x_0,g)$ is an open covering of $S$. Since $S$ is a compact set
there exists a finite subset $A\subset\cn(j_0,x_0,g)$, $A=\{{\nu}_1,{\nu}_2,\ld,{\nu}_{m_0}\}$ such that
$S=\bigcup\limits^{m_0}_{n=1}V_g(j_0,x_0,{\nu}_n)$. Let $\el_0:=\max A$. Then for each $a\in S$, there exists
$n\in\N$, $n\le\el_0$ such that
\[
\sup_{|z|\le x_0}|g(z+\la_na)-p_{j_0}(z)|<\frac{1}{x_0}.
\]
It follows that $\cu(p_{j_0}) \subset V(x_0,\el_0,j_0)$ for arbitrary $x_0>0$, from which we get
\begin{eqnarray}
\cu(p_{j_0}) \subset \bigcap_{x>0}\bigcup^\infty_{n=1}V(x,n,j_0).  \label{sec6eq7}
\end{eqnarray}
Thus (\ref{sec6eq2}) holds for every $j=1,2,\ld\,.$ It is obvious that
\begin{eqnarray}
\bigcap_{x>0}\bigcup^\infty_{n=1}V(x,n,j_0)\subset\bigcap^\infty_{m=1} \bigcup^\infty_{n=1}V(m,n,j_0).
\label{sec6eq8}
\end{eqnarray}
By (\ref{sec6eq2}) and (\ref{sec6eq8}) we get
\begin{eqnarray}
\cu(p_j)\subset\bigcap^\infty_{m=1}\bigcup^\infty_{n=1}V(m,n,j) \ \ \text{for every} \ \ j=1,2,\ldots .
\label{sec6eq9}
\end{eqnarray}
So
\begin{eqnarray}
\bigcap^\infty_{j=1}\cu(p_j)\subset\bigcap^\infty_{j=1}\bigcap^\infty_{m=1} \bigcup^\infty_{n=1}V(m,n,j).
\label{sec6eq10}
\end{eqnarray}
By (\ref{sec6eq1}) and (\ref{sec6eq10}) we have
\begin{eqnarray}
\bigcap_{a\in S}HC(\{ T_{\la_na}\} ) \subset\bigcap^\infty_{j=1}\bigcap^\infty_{m=1}\bigcup^\infty_{n=1}
V(m,n,j), \label{sec6eq11}
\end{eqnarray}
and now it is plain that
\begin{eqnarray}
U\subset\bigcap^\infty_{j=1}\bigcap^\infty_{m=1}\bigcup^\infty_{n=1}V(m,n,j). \label{sec6eq12}
\end{eqnarray}
We consider the following families of sets
\[
\cde_1:=\bigg\{\bigcup^\infty_{n=1}V(m,n,j),m,j\in\N\bigg\} \ \ \text{and}
\]
\[
\cde_2:=\bigg\{\bigcup^\infty_{m=1}E(s,j,k,m),s,j,k\in\N\bigg\}.
\]
Clearly $\cde_1\subseteq\cde_2$. Thus
\begin{eqnarray}
\bigcap_{E\in \cde_2}E\subset\bigcap_{V\in \cde_1}V.  \label{sec6eq13}
\end{eqnarray}
Let $E=E(s,j,k,m)$ for some $s,j,k,m\in\N$. Then $V(\el,m,j)\subset E$ for $\el=\max\{s,k\}$. Hence, for every
$E\in\cde_2$ there exists $\Ga\in\cde_1$ such that $\Ga\subset E$. If we set
$\widetilde{\cde}=\{\Ga\in\cde_1|\exists\;E\in\cde_2:\Ga\subset E\}$, it follows that
$\bigcap\limits_{\Ga\in\widetilde{\cde}}\Ga\subset \bigcap\limits_{E\in\cde_2}E$. But then
\begin{eqnarray}
\bigcap_{V\in \cde_1}V \subset\bigcap_{\Ga\in\widetilde{\cde}}\Ga\subset\bigcap_{E\in \cde_2}E. \label{sec6eq14}
\end{eqnarray}
By (\ref{sec6eq12}), (\ref{sec6eq13}) and (\ref{sec6eq14}) we have
\begin{eqnarray}
U=\bigcap^\infty_{j=1}\bigcap^\infty_{m=1}\bigcup^\infty_{n=1}V(m,n,j). \label{sec6eq15}
\end{eqnarray}
Now (\ref{sec6eq11}) and (\ref{sec6eq15}) yield
\begin{eqnarray}
\bigcap_{a\in S}HC(\{ T_{\la_na}\} )\subset U
\end{eqnarray}
and the proof of lemma \ref{lem1} is complete. \qb

The above lemma holds with the same proof for every compact subset $K\subseteq\C\setminus \{ 0\}$ instead of $S$
and for every sequence of non-zero complex numbers $(\la_n)$ such that $\la_n\ra\infty$ as $n\ra+\infty$.

\section{Final step of the proof of Theorem \ref{thm1} }

To conclude the proof of Theorem \ref{thm1} we need the following three elementary lemmas.

\begin{lem}\label{sec7lem1}
Let $(\la_n)$ be a sequence of non-zero complex numbers such that $\la_n\ra\infty$ as $n\ra+\infty$. Suppose
that $\underset{n\ra+\infty}{\lim\sup}\Big|\dfrac{\la_{n+1}}{\la_n}\Big|=1$. Then for any fixed positive numbers
$M_1,M_2$ there exists a subsequence $(\mi_n)$ of $(\la_n)$ with the following properties:
\begin{enumerate}
\item[1)] $|\mi_{n+1}|-|\mi_n|>M_1$, for every $n=1,2,\ld$,
\item[2)] $\Big|\dfrac{\mi_{n+1}}{\mi_n}\Big|\ra1$ as $n\ra+\infty$,
\item[3)] $\underset{n\ra+\infty}{\lim\inf}\Big(n\Big(\Big|\dfrac{\mi_{n+1}}{\mi_n}\Big|
    -1\Big)\Big)>M_2$.
\end{enumerate}
\end{lem}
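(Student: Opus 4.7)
The plan is to build the subsequence $(\mu_n)=(\lambda_{k_n})$ by a greedy inductive procedure that forces both the lower bounds required by conditions (1) and (3) directly, and then to use the minimality of the choice to extract the upper bound needed for condition (2). Fix an auxiliary constant $\delta>0$. Using the hypothesis $\limsup_m|\lambda_{m+1}/\lambda_m|=1$, for any prescribed $\epsilon>0$ there is an index $N_\epsilon$ such that $|\lambda_{m+1}/\lambda_m|<1+\epsilon$ for all $m\ge N_\epsilon$. Set $k_1:=N_\epsilon$ for some small $\epsilon$, and, having chosen $k_n$, let $k_{n+1}$ be the smallest integer $m>k_n$ satisfying
\[
|\lambda_m|\ge|\lambda_{k_n}|\Bigl(1+\tfrac{M_2+\delta}{n}\Bigr)
\quad\text{and}\quad
|\lambda_m|\ge|\lambda_{k_n}|+M_1.
\]
Existence of such an $m$ is immediate from $|\lambda_m|\to+\infty$, so the construction goes through and $k_n\to+\infty$.

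Conditions (1) and (3) are automatic: (1) is the second defining inequality, and the first yields $n(|\mu_{n+1}/\mu_n|-1)\ge M_2+\delta>M_2$, giving (3). The interesting step is (2), which will follow from the minimality of $k_{n+1}$. If $k_{n+1}=k_n+1$ then $|\mu_{n+1}/\mu_n|=|\lambda_{k_n+1}/\lambda_{k_n}|<1+\epsilon$ directly. Otherwise the index $k_{n+1}-1>k_n$ violates at least one of the two inequalities above, i.e.\ either $|\lambda_{k_{n+1}-1}|<|\lambda_{k_n}|(1+(M_2+\delta)/n)$ or $|\lambda_{k_{n+1}-1}|<|\lambda_{k_n}|+M_1$. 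Multiplying by the single-step bound $|\lambda_{k_{n+1}}/\lambda_{k_{n+1}-1}|<1+\epsilon$, we get respectively
\[
|\mu_{n+1}/\mu_n|<(1+\epsilon)\Bigl(1+\tfrac{M_2+\delta}{n}\Bigr)
\quad\text{or}\quad
|\mu_{n+1}/\mu_n|<(1+\epsilon)\Bigl(1+\tfrac{M_1}{|\lambda_{k_n}|}\Bigr).
\]
Both bounds tend to $1$ as $n\to\infty$, since $|\lambda_{k_n}|\to+\infty$; since $\epsilon>0$ was arbitrary, $|\mu_{n+1}/\mu_n|\to 1$, proving (2).

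The main obstacle I anticipate is simply bookkeeping: one must treat the three mutually exclusive sub-cases ($k_{n+1}=k_n+1$, or $k_{n+1}-1$ violating the first inequality, or $k_{n+1}-1$ violating the second) in a uniform way, and the dependence on the auxiliary $\epsilon$ must be handled correctly. This is resolved by fixing $k_1$ large enough \emph{first} so that the single-step upper bound $1+\epsilon$ is valid throughout the induction, and only at the very end letting $\epsilon\downarrow 0$. No deeper analytical input is required beyond the hypotheses $\lambda_n\to\infty$ and $\limsup|\lambda_{n+1}/\lambda_n|=1$.
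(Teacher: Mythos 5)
Your construction is essentially sound, and its engine is the same one the paper uses: choose the next index \emph{minimally} subject to the lower bounds you want, so that the preceding index violates one of them, and then control the last single step by the hypothesis $\limsup_n|\lambda_{n+1}/\lambda_n|=1$, squeezing the ratio between two quantities tending to $1$. The difference is organizational: the paper performs three successive extractions (first a subsequence with strictly increasing moduli and ratio tending to $1$, then one with gaps larger than $M_1$, then one meeting the $\liminf$ condition), re-running the minimality trick at each stage, whereas you merge the gap condition and the growth condition into a single greedy selection. Your one-pass version is shorter and dispenses with the paper's preliminary monotonization step, since your two selection inequalities already force $|\mu_{n+1}|>|\mu_n|$; the paper's staged version buys a cleaner verification at each stage at the cost of length. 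The $\delta$-buffer you insert to get the strict inequality in (3) is exactly right; note only that your second selection inequality yields $|\mu_{n+1}|-|\mu_n|\ge M_1$ rather than $>M_1$, which you should repair trivially (e.g.\ select with $M_1+1$, or make the inequality strict).

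The one place where your write-up is genuinely imprecise is the role of $\varepsilon$ in the proof of (2). As written, the bounds $(1+\varepsilon)\bigl(1+\frac{M_2+\delta}{n}\bigr)$ and $(1+\varepsilon)\bigl(1+\frac{M_1}{|\lambda_{k_n}|}\bigr)$ tend to $1+\varepsilon$, not to $1$, and ``since $\varepsilon$ was arbitrary'' is not available if $\varepsilon$ was fixed to define $k_1$: each $\varepsilon$ would then produce a \emph{different} subsequence, and knowing that for every $\varepsilon$ some subsequence has $\limsup$ ratio at most $1+\varepsilon$ does not by itself produce one subsequence with limit $1$. The correct (and easy) repair, which is also how the paper argues, is to fix the subsequence once and for all and only then invoke the hypothesis for each $\varepsilon'>0$: since $k_n\to+\infty$, for all sufficiently large $n$ every single-step ratio you use satisfies $|\lambda_{m+1}/\lambda_m|<1+\varepsilon'$, so your three cases give $\limsup_n|\mu_{n+1}/\mu_n|\le 1+\varepsilon'$ for every $\varepsilon'>0$, hence $\le 1$; combined with $|\mu_{n+1}/\mu_n|\ge 1+\frac{M_2+\delta}{n}$ from the first selection inequality, this yields (2). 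With that rephrasing (and the strictness fix in (1)), your proof is complete.
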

\begin{proof}
We prove this lemma in three steps:

{\bf Step 1.} We construct a subsequence $(\thi_n)$ of $(\la_n)$ such that $(|\thi_n|)$ is strictly increasing
and $\Big|\dfrac{\thi_{n+1}}{\thi_n}\Big|\ra1$ as $n\ra+\infty$.

{\bf Step 2.} We construct a subsequence $(k_n)$ of $(\thi_n)$ such that $|k_{k+1}|-|k_n|>M_1$ $\fa n=1,2,\ld$
and $\Big|\dfrac{k_{n+1}}{k_n}\Big|\ra1$ as $n\ra+\infty$.

{\bf Step 3.} Finally, we construct a subsequence $(\mi_n)$ of $(k_n)$ which has the three properties 1), 2) and
3) of the lemma.

{\bf Proof of Step 1.}

We set $\thi_1:=\la_1$. Let $n_1\ge2$ be the smallest natural number such that $|\la_{n_1}|>|\la_1|$. Define
$\thi_2:=\la_{n_1}$. Suppose now that we have constructed inductively the numbers
$\la_{n_1},\la_{n_2},\ld,\la_{n_k}$ for some $k\ge 2$, where $|\la_{n_{i+1}}|>|\la_{n_i}|$ and $n_{i+1}$ is the
smallest natural number such that $n_{i+1}>n_i$ and $|\la_{n_{i+1}}|>|\la_{n_i}|$ for every $i=1,2,\ld,k-1$. Set
$\thi_{i+1}=\la_{n_i}$ for $i=1,\ld,k$. Next we consider the number $\la_{n_{k+1}}$, where $n_{k+1}$ is the
smallest natural number with $n_{k+1}\ge n_k+1$ and such that $|\la_{n_{k+1}}|>|\la_{n_k}|$ and we set
$\thi_{k+2}=\la_{n_{k+1}}$.

So, we have constructed a subsequence $(\thi_n)$ of $(\la_n)$ such that the sequence $(|\thi_n|)$ is strictly
increasing. For every $k\in \mathbb{N}$ we have
\[
1<\bigg|\frac{\la_{n_{k+1}}}{\la_{n_k}}\bigg|\le\bigg|\frac{\la_{n_{k+1}}} {\la_{n_{k+1}-1}}\bigg|
\]
and by our assumptions on $(\lambda_n)$ we conclude that $|\thi_{n+1}|/|\thi_n|\to 1$.

{\bf Proof of Step 2.}

Now we construct a subsequence of $(\thi_n)$ as follows. We set $k_1:=\thi_1$. Let $v_1$ be the smallest natural
number such that $v_1\ge 2$ and $|\thi_{v_1}|>|k_1|+M_1$. Set now $k_2:=\thi_{v_1}$. Suppose that we have
constructed inductively the numbers $\thi_1,\thi_{v_1},\ld,\thi_{v_m}$ for some $m\ge 2$, where $v_{i+1}$ is the
smallest natural number such that $|\thi_{v_{i+1}}|>|\thi_{v_i}|+M_1$ and $v_{i+1}>v_i$ for each $i=1,\ld,m-1$.
Then set $k_{i+1}=\thi_{v_i}$ for $i=1,\ld,m$. Next we consider the smallest natural number $v_{m+1}\ge v_m+1$
such that $|\thi_{v_{m+1}}|>|\thi_{v_m}|+M_1$ and we set $k_{m+2}=\thi_{v_{m+1}}$.

Therefore we have constructed a subsequence $(k_n)$ of $(\thi_n)$ where $|k_{n+1}|> |k_n|+M_1$ for each
$n=1,2,\ldots $. For every $m=1,2,\ldots $ it holds that $|\thi_{v_m}|\le|\thi_{v_{m+1}-1}|\leq
|\thi_{v_m}|+M_1$, which implies
\begin{eqnarray}
1\le \bigg|\frac{\thi_{v_{m+1}-1}}{\thi_{v_m}}\bigg| \leq 1+\frac{M_1}{|\thi_{v_m}|}. \label{sec7eq3}
\end{eqnarray}
On the other hand we have
\begin{eqnarray}
1<\bigg|\frac{\thi_{v_{m+1}}}{\thi_{v_{m+1}-1}}\bigg|, \,\,\,\, m=1,2,\ldots  \label{sec7eq4}
\end{eqnarray}
and
\begin{eqnarray}
\lim_{n\ra+\infty}\bigg|\frac{\thi_{n+1}}{\thi_n}\bigg|=1.  \label{sec7eq5}
\end{eqnarray}
By (\ref{sec7eq3}), (\ref{sec7eq4}) and (\ref{sec7eq5}) we conclude that $|k_{n+1}|/|k_n|\to 1$ as
$n\ra+\infty$.

{\bf Proof of Step 3.}

We construct inductively a subsequence $(\mi_n)$ of $(k_n)$ as follows. Set $\mi_1:=k_1$. Let $\si_1$ be the
smallest natural number such that $\si_1\ge2$ and $|k_{\si_1}|>|k_1|\Big(1+\dfrac{M_2}{1}\Big)$ and then define
$\mi_2:=k_{\si_1} $. After, let $\si_2$ be the smallest natural number such that $\si_2\geq \si_1+1$ ,
$k_{\si_2}\ge\mi_2+1$ and $|k_{\si_2}|>|k_{\si_1}|\cdot\Big(1+\dfrac{M_2}{2}\Big)$ and define
$\mi_3:=k_{\si_2}$. In this way, we construct inductively a subsequence $(\mi_n)$ of $(k_n)$ such that for every
$n=2,3,\ldots $ the natural number $\si_n$ is the smallest with the following properties: $k_{\si_n}\ge\mi_n+1$,
$\si_n\geq \si_{n-1}+1$,
\begin{eqnarray}
|\mi_{n+1}|\ge|\mi_n|\bigg(1+\frac{M_2}{n}\bigg),   \label{sec7eq6}
\end{eqnarray}
and $\mi_{n+1}=k_{\si_n}$.

As a consequence of the above construction we get
\begin{eqnarray}
1\le\bigg|\frac{k_{\si_{n+1}-1}}{k_{\si_n}}\bigg|<1+\frac{M_2}{n+1}, \,\,\, n=1,2,\ldots \label{sec7eq7}
\end{eqnarray}
\begin{eqnarray}
1<\bigg|\frac{k_{\si_{n+1}}}{k_{\si_{n+1}-1}}\bigg|, \,\,\, n=1,2,\ldots \label{sec7eq8}
\end{eqnarray}
\begin{eqnarray}
\bigg|\frac{k_{n+1}}{k_n}\bigg|\ra1 \ \ \text{as} \ \ n\ra+\infty.  \label{sec7eq9}
\end{eqnarray}
By  (\ref{sec7eq7}), (\ref{sec7eq8}) and (\ref{sec7eq9}) we conclude that $|\mi_{n+1}|/|\mi_n|\to 1$ as
$n\ra+\infty$ and the sequence $(\mi_n)$ has all the desired properties. This completes the proof the lemma.
\end{proof}

\begin{lem}\label{sec7lem2}
Let $(\la_n)$ be a sequence of non-zero complex numbers such that $\la_n\ra\infty$ as $n\ra+\infty$. Suppose
that
\[
\underset{n\ra+\infty}{\lim\sup}\bigg|\frac{\la_{n+1}}{\la_n}\bigg|\le1+\e
\]
for some $\e>0$. Then for every pair $(M_1,M_2)$ of positive numbers there exists a subsequence $(\mi_n)$ of
$(\la_n)$ with the following properties:
\begin{enumerate}
\item[1)] $|\mi_1|>M_1$,
\item[2)] $|\mi_{n+1}|-|\mi_n|>M_1$, $n=1,2,\ld$,
\item[3)] $\underset{n\ra+\infty}{\lim\sup}\Big|\dfrac{\mi_{n+1}}{\mi_n}\Big|\le1+\e$,
\item[4)] $\underset{n\ra+\infty}{\lim\inf}\Big(n\Big(\Big|\dfrac{\mi_{n+1}}{\mi_n}\Big|
    -1\Big)\Big)>M_2$.
\end{enumerate}
\end{lem}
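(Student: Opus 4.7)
\medskip
\noindent\textbf{Proof plan for Lemma \ref{sec7lem2}.}

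The plan is to mimic the three-step extraction used for Lemma \ref{sec7lem1}, but with the additional demand that the bound $\limsup_n |\cdot_{n+1}/\cdot_n| \leq 1+\e$ be preserved at every stage rather than collapsed to $1$. At each step the key device is to choose the thinning index \emph{minimally} so that the term immediately before it is controlled.

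Step 1. From $(\la_n)$ extract a subsequence $(\thi_n)$ along which $(|\thi_n|)$ is strictly increasing: set $\thi_1:=\la_1$ and, inductively, $\thi_{k+1}:=\la_{n_k}$ where $n_k$ is the smallest index $>n_{k-1}$ with $|\la_{n_k}|>|\thi_k|$. Minimality gives $|\la_{n_k-1}|\le|\thi_k|$, so
\[
\frac{|\thi_{k+1}|}{|\thi_k|}\le\frac{|\la_{n_k}|}{|\la_{n_k-1}|},
\]
and hence $\limsup_k|\thi_{k+1}/\thi_k|\le 1+\e$.

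Step 2. From $(\thi_n)$ extract $(k_n)$ realizing properties 1) and 2): pick $k_1:=\thi_{n_0}$ with $|\thi_{n_0}|>M_1$ and inductively $k_{m+1}:=\thi_{v_m}$ where $v_m$ is the smallest index $>v_{m-1}$ with $|\thi_{v_m}|>|k_m|+M_1$. Minimality gives $|\thi_{v_m-1}|\le |k_m|+M_1$, whence
\[
\frac{|k_{m+1}|}{|k_m|}=\frac{|\thi_{v_m}|}{|\thi_{v_m-1}|}\cdot\frac{|\thi_{v_m-1}|}{|k_m|}\le \frac{|\thi_{v_m}|}{|\thi_{v_m-1}|}\!\left(1+\frac{M_1}{|k_m|}\right).
\]
The first factor has $\limsup\le 1+\e$ by Step 1; the second tends to $1$ because $|k_m|\to\infty$. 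So $\limsup_n|k_{n+1}/k_n|\le 1+\e$ as well.

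Step 3. From $(k_n)$ extract the desired $(\mi_n)$: set $\mi_1:=k_1$ and $\mi_{n+1}:=k_{\si_n}$, where $\si_n$ is the smallest index satisfying $\si_n>\si_{n-1}$ and $|k_{\si_n}|>|\mi_n|\bigl(1+M_2/n\bigr)$. Such $\si_n$ exists because $|k_n|\to\infty$. Property 4) is immediate from $n(|\mi_{n+1}|/|\mi_n|-1)>M_2$, and properties 1), 2) are inherited from Step 2 since $|\mi_{n+1}|-|\mi_n|$ telescopes into a sum of gaps of $(|k_n|)$, each exceeding $M_1$. For property 3), which is where we must go beyond Lemma \ref{sec7lem1}, minimality of $\si_n$ yields $|k_{\si_n-1}|\le|\mi_n|\bigl(1+M_2/n\bigr)$ (trivially when $\si_n=\si_{n-1}+1$, by the failure of the defining inequality at $\si_n-1$ otherwise), and therefore
\[
\frac{|\mi_{n+1}|}{|\mi_n|}=\frac{|k_{\si_n}|}{|k_{\si_n-1}|}\cdot\frac{|k_{\si_n-1}|}{|\mi_n|}\le \frac{|k_{\si_n}|}{|k_{\si_n-1}|}\!\left(1+\frac{M_2}{n}\right).
\]
Combining Step 2's $\limsup$ bound with $1+M_2/n\to 1$ gives $\limsup_n|\mi_{n+1}/\mi_n|\le 1+\e$, which is property 3).

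The main obstacle is precisely Step 3: thinning $(k_n)$ in order to force the lower bound $|\mi_{n+1}/\mi_n|\ge 1+M_2/n$ could \emph{a priori} push the ratios past $1+\e$. In Lemma \ref{sec7lem1} this worry did not arise, since the hypothesis $\limsup=1$ forced the ratios to tend to $1$. Under the weaker hypothesis $\limsup\le 1+\e$, one must thin \emph{minimally}, so that $|k_{\si_n-1}|$ sits just below the target $|\mi_n|(1+M_2/n)$ and the extra factor introduced by the jump from $k_{\si_n-1}$ to $k_{\si_n}$ is still controlled by the Step 2 bound.
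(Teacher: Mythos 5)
Your proposal is correct and takes essentially the same route as the paper, which proves Lemma \ref{sec7lem2} simply by declaring it ``almost identical'' to Lemma \ref{sec7lem1} with $\limsup=1$ replaced by $\limsup\le 1+\varepsilon$; your three-step extraction, using minimality of each chosen index to bound the term just before it and thereby propagate the $1+\varepsilon$ bound, is exactly the mechanism of \eqref{sec7eq3}--\eqref{sec7eq5} and \eqref{sec7eq7}--\eqref{sec7eq9}. The only hair worth noting (shared by the paper's own Step 3 of Lemma \ref{sec7lem1}) is that forcing $|\mu_{n+1}|>|\mu_n|(1+M_2/n)$ literally yields $\liminf\ge M_2$ rather than $>M_2$, which is trivially repaired by running the construction with $2M_2$ in place of $M_2$.
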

\begin{proof}
The proof is almost identical to the proof of Lemma \ref{sec7lem1}. The only difference is that, whenever
needed, instead of $\underset{n\ra+\infty}{\lim\sup}\Big|\dfrac{\la_{n+1}}{\la_n}\Big|=1$ we use
$\underset{n\ra+\infty}{\lim\sup}\Big|\dfrac{\la_{n+1}}{\la_n}\Big|\le1+\e$.
\end{proof}

By Lemma \ref{sec7lem2} along with elementary considerations we obtain the following lemma, whose proof is left
to the interested reader.
\begin{lem} \label{sec7lem3}
Let $\Lambda:=(\la_n)$ be a fixed sequence of non-zero complex numbers such that $\la_n\ra\infty$ as
$n\ra+\infty$. Then $i(\Lambda )=1$ if and only if for every positive number $\sigma_j$, $j=1,2,3,4,5$ and
positive integers $m_0$, $k_0$ with $m_0\geq [\sigma_1]+1$, $k_0\geq [\sigma_3]+1$ there exist a subsequence
$(\mu_n)$ of $(\la_n)$ and a positive integer $n_0$ such that for every $n\geq n_0$ the following five
properties hold:
\begin{enumerate}
\item[1)] $|\mu_n|\cdot\ssum^{m_0-1}_{k=0}\dfrac{1}{|\mu_{n+k}|}>\sigma_1$
\item[2)] $|\mu_{n+1}|-|\mu_n|>\sigma_2$
\item[3)]  $|\mu_n|\cdot\ssum^{k_0}_{i=1}\dfrac{1}{|\mu_{n+im_0-1}|}>\sigma_3$
\item[4)] $n\bigg(\bigg|\dfrac{\mu_{n+1}}{\mu_n}\bigg|-1\bigg)>\sigma_4$
\item[5)] $\dfrac{n}{|\mu_n|}<\sigma_5 .$
\end{enumerate}
\end{lem}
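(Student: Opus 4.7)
The plan is to prove both directions by leveraging Lemma \ref{sec7lem2}, with the forward implication following from a direct application and the backward implication exploiting property 1) in the special case $m_0=2$ to pin down the ratios.

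For the forward direction, given $\sigma_1,\ldots,\sigma_5$ and $m_0\ge[\sigma_1]+1$, $k_0\ge[\sigma_3]+1$, I would first fix a small $\e>0$ ensuring both
\[
\sum_{k=0}^{m_0-1}(1+2\e)^{-k}>\sigma_1 \quad\text{and}\quad \sum_{i=1}^{k_0}(1+2\e)^{-(im_0-1)}>\sigma_3,
\]
which is possible since these sums tend to $m_0>\sigma_1$ and $k_0>\sigma_3$ as $\e\to 0^+$. Since $i(\Lambda)=1<1+\e$, by definition of $i(\Lambda)$ there exists a subsequence $(\la_{n_j})$ of $(\la_n)$ with $\limsup_j|\la_{n_{j+1}}/\la_{n_j}|\le 1+\e$. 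Applying Lemma \ref{sec7lem2} to $(\la_{n_j})$ with $M_1:=\max\{\sigma_2,\,2/\sigma_5\}$ and $M_2:=\sigma_4$ would then furnish a subsequence $(\mu_n)$ of $(\la_n)$ with $|\mu_1|>M_1$, $|\mu_{n+1}|-|\mu_n|>M_1$, $\limsup|\mu_{n+1}/\mu_n|\le 1+\e$, and $\liminf n(|\mu_{n+1}/\mu_n|-1)>M_2$. Properties 2) and 4) are then immediate; property 5) follows because $|\mu_n|>nM_1$ implies $n/|\mu_n|<1/M_1\le\sigma_5/2<\sigma_5$; and properties 1) and 3) follow from the telescoping bound $|\mu_{n+k}/\mu_n|\le (1+2\e)^k$, valid for $n$ large enough, combined with the choice of $\e$.

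For the backward direction, the fact that $\la_n\to\infty$ already gives $i(\Lambda)\ge 1$, so it suffices to show $i(\Lambda)\le 1$. I would fix an arbitrary $\delta>0$ and set $m_0:=2$, $\sigma_1:=2-\delta/(2(1+\delta))$ (so that $[\sigma_1]+1=2=m_0$), choosing any admissible $\sigma_2,\sigma_3,\sigma_4,\sigma_5,k_0$. By hypothesis there exist a subsequence $(\mu_n)$ of $(\la_n)$ and $n_0$ for which the five properties hold for $n\ge n_0$. With $m_0=2$, property 1) reads $1+|\mu_n|/|\mu_{n+1}|>\sigma_1$. If some $n\ge n_0$ had $|\mu_{n+1}/\mu_n|\ge 1+\delta$, then $1+|\mu_n|/|\mu_{n+1}|\le 1+1/(1+\delta)=2-\delta/(1+\delta)<\sigma_1$, a contradiction. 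Hence $|\mu_{n+1}/\mu_n|<1+\delta$ for every $n\ge n_0$, so $\limsup|\mu_{n+1}/\mu_n|\le 1+\delta$ and $i(\Lambda)\le 1+\delta$. Letting $\delta\to 0^+$ gives $i(\Lambda)=1$.

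The main obstacle is in the forward direction: converting the $\limsup$ bound supplied by Lemma \ref{sec7lem2} into a uniform telescoping estimate over all exponents $k\le m_0-1$ and $im_0-1$ for $i\le k_0$ simultaneously. This is handled by the cushion of $\e$ versus $2\e$ and by taking the threshold $n_0$ sufficiently large, after which everything else amounts to routine bookkeeping with the chosen constants.
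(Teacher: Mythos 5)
Your argument is correct: the forward direction applies Lemma \ref{sec7lem2} (after extracting, from $i(\Lambda)=1$, a subsequence with ratio $\limsup$ at most $1+\e$) and converts the eventual ratio bound $1+2\e$ into the telescoping estimates for properties 1) and 3), while the converse cleverly specializes to $m_0=2$ to force $\limsup|\mu_{n+1}/\mu_n|\le 1+\delta$ for arbitrary $\delta>0$. This is precisely the route the paper indicates — it omits the proof, stating only that the lemma follows from Lemma \ref{sec7lem2} together with elementary considerations — so your proposal simply supplies those details correctly.
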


{\bf Proof of Theorem \ref{thm1}}.

A careful inspection of the proof of Proposition \ref{prop1} shows that the conclusion of Proposition
\ref{prop1} holds whenever the sequence $(\la_n)$ satisfies the properties $(2.1)-(2.8)$ in subsection
\ref{good}. Fix a sequence $(\la_n)$ of non-zero complex numbers such that $\la_n\ra\infty$ as $n\ra+\infty$. In
view of Lemmas \ref{sec7lem2}, \ref{sec7lem3}, it is easy to show that there exists a subsequence $(\mu_n)$ of
$(\lambda_n)$ which satisfies properties $(2.1)-(2.8)$. Therefore $$\bigcap_{a\in S}HC(\{ T_{\mu_na}\} )$$ is
$G_\de$ and dense subset of $(\ch(\C),\ct_u)$ and since $$\bigcap_{a\in S}HC(\{ T_{\mu_na}\} ) \subset
\bigcap_{a\in S}HC(\{ T_{\la_na}\} )$$ it readily follows that $$\bigcap_{a\in S}HC(\{ T_{\la_na}\} )$$ is
$G_\de$ and dense subset of $(\ch(\C),\ct_u)$. Then, applying once more Baire's category theorem, see the
discussion after the statement of Theorem \ref{thm2}, we conclude the proof of Theorem \ref{thm1}.
\section{Examples of sequences $\Lambda:=(\la_n)$ with $i(\Lambda )=1$}
Let $\Lambda =(\la_n)$ be a sequence of non-zero complex numbers. Define the set
$$
\mathcal{B}(\Lambda ):= \Big\{ a\in [0,+\infty ]| \,\, \exists (\mu_n) \subset \Lambda \,\,\,\textrm{with}\,\,\, a=\limsup_{n} \Big|
\frac{\mu_{n+1}}{\mu_n }\Big| \Big\} .
$$
Observe now that, by definition, $i(\Lambda )=\inf\mathcal{B}(\Lambda )$ and whenever $\la_n\to \infty$ then
$\mathcal{B}(\Lambda ) \subset [1,+\infty]$. We shall present four distinct classes of sequences $\Lambda:=
(\la_n)$ satisfying the property $i(\Lambda )=1$ in order to illustrate our main result, Theorem \ref{thm1}.\\

1) \textit{Examples with $\la_n\to \infty$ and $|\la_{n+1}|/|\la_n|\to 1$}.\\

A sample of sequences satisfying the previously mentioned properties is: $n$, $n^2$, $p(n)$ where $p$ is a
non-constant complex polynomial, $\log n$, $n^{\beta} \log n$, $\beta>0$, $n^{\gamma}/\log (n+1)$, $\gamma
>2$ etc. Of course, one can assign to each term of the above sequences fixed unimodular numbers with arbitrary
arguments and still the desired properties are satisfied i.e. $e^{i\theta_n}n^2$, $e^{i\theta_n }\log n$ for
$\theta_n \in \mathbb{R}$, etc.

A more interesting example is the sequence $e^{n^c}$, for $0<c<1$, which has super-polynomial growth. Observe
that the case $c=1$, is a borderline for the validity of Theorem \ref{thm1}. Indeed, as we already mentioned in
the Introduction,
$$
\bigcap_{a\in \{ z: |z|=1 \} }HC(\{ T_{e^na}\} )=\emptyset ,
$$
by the main result in \cite{8}.

A last family of sequences, satisfying the above properties, we would like to mention is the following:
$e^{\frac{n}{\log n}}$, $e^{\frac{n}{\log \log n}}$, etc. Note that such sequences
grow faster than any sequence of the form $e^{n^c}$, $0<c<1$. \\

2) \textit{Examples with $\la_n\to \infty$, the limit $\lim_{n\to +\infty}|\la_{n+1}/\la_n|$ does not exist, but
$\limsup_{n}|\la_{n+1}/\la_n|=1$} .\\

There is a plethora of sequences exhibiting such a behavior. For instance, set $\la_1=1$. We shall define the
sequence $(\la_n)$ inductively according to the following rule. If for some $k\in \mathbb{N}$ there exists $n\in
\mathbb{N}$ such that $\la_k=n^2$ then define
$$\la_{k+i}:=n+i-1 \,\,\, \textrm{for every}\,\,\, i=1,2,\ldots , n^2+n+2 .$$
It is easy to show that the sequence $(\la_n)$ has the desired properties.\\

3) \textit{Examples with $\la_n\to \infty$, $\limsup_{n}|\la_{n+1}/\la_n|>1$ and
$\limsup_{n}|\mu_{n+1}/\mu_n|=1$ for some subsequence $(\mu_n)$ of $(\la_n)$}.\\

Take $\la_{2n+1}=n$, $\la_{2n}=2^n$ for $n=1,2,\ldots $ or more general fix a sequence of positive numbers
$\gamma_n$ satisfying $\gamma_n\to \infty$, $\gamma_{n+1}/\gamma_n\to 1$, consider a strictly increasing
sequence $(m_n)$ of positive integers with $m_n>n$ for every $n=1,2,\ldots $ and then define
$\la_{m_n}=\gamma_n$ and on the set $\{ \rho_1 < \rho_2< \cdots \} := \mathbb{N}\setminus \{ m_n: n=1,2,\ldots
\}$ define $\lambda_{\rho_n}$ to be any positive number such that $\lambda_{\rho_n}\to +\infty$ and
$\lambda_{\rho_{n+1}}/\lambda_{\rho_n} \to c$ for some $c\in (1,+\infty ]$.\\

4) \textit{Examples with $\la_n\to \infty$, $\inf\mathcal{B}(\Lambda ) \notin \mathcal{B}(\Lambda )$ and $i(\Lambda)=1$}.\\

In all the above examples we have that $\inf\mathcal{B}(\Lambda ) \in \mathcal{B}(\Lambda )$. This means that
the above infimum becomes minimum. We shall now differentiate from this situation by exhibiting examples of
$\Lambda=(\la_n)$ such that $\la_n\to \infty$, $i(\Lambda)=1$ and for every subsequence $(\mu_n)$ of $(\la_n)$
we have $\limsup_{n}|\mu_{n+1}/\mu_n|>1$. To produce such an example is not an easy task, as it requires a
considerable amount of work, though elementary, concerning a particular representation of positive integers
involving powers of $10$. Therefore, we omit the details and we just state the following lemma without proof.

\begin{lem} \label{sec8lem}
For every positive integer $n\geq 11$ there exists a unique triple $(\nu ,k,j)$ with $\nu \in
\mathbb{N}\setminus \{ 1\} $, $k\in \{ 1,2,\ldots ,\nu \}$, $j\in \{ 1,2,\ldots ,10^k \}$ such that
$$n=\frac{10}{9} \Big( \frac{10}{9} (10^{\nu -1}-1)-\nu+10^{k-1} \Big)+j.$$
\end{lem}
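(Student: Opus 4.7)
The plan is to recast the formula as a bijection from admissible triples to the set $\{11,12,13,\ldots\}$. Set
\[
F(\nu,k):=\frac{10}{9}\Big(\frac{10}{9}(10^{\nu-1}-1)-\nu+10^{k-1}\Big),
\]
so that the claim asks us to show that $(\nu,k,j)\mapsto F(\nu,k)+j$ is a bijection from $\{(\nu,k,j):\nu\ge 2,\,1\le k\le \nu,\,1\le j\le 10^{k}\}$ onto $\{n\in\mathbb{N}:n\ge 11\}$. First I would verify that $F(\nu,k)$ is a non-negative integer: $(10^{\nu-1}-1)/9$ is an integer (repunit), and modulo $9$ the inner bracket equals $(\nu-1)-\nu+10^{k-1}\equiv -1+1=0$, so multiplication by $10/9$ again yields an integer.

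Next, for each fixed pair $(\nu,k)$ with $\nu\ge 2$ and $1\le k\le \nu$, letting $j$ run through $\{1,\ldots,10^{k}\}$ produces exactly the block of $10^{k}$ consecutive integers $B(\nu,k):=\{F(\nu,k)+1,\ldots,F(\nu,k)+10^{k}\}$. The heart of the proof is then to order the pairs $(\nu,k)$ lexicographically,
\[
(2,1)<(2,2)<(3,1)<(3,2)<(3,3)<(4,1)<\cdots,
\]
and to show that in this order the blocks $B(\nu,k)$ fit together end-to-end and start at $11$. The three identities needed are:
\begin{enumerate}
\item[(a)] $F(2,1)=10$, so $B(2,1)=\{11,\ldots,20\}$;
\item[(b)] for $1\le k<\nu$,
\[
F(\nu,k+1)-F(\nu,k)=\tfrac{10}{9}(10^{k}-10^{k-1})=10^{k},
\]
so $B(\nu,k+1)$ begins exactly one above the last element of $B(\nu,k)$;
\item[(c)] $F(\nu+1,1)-F(\nu,\nu)=10^{\nu}$, so the first block of level $\nu+1$ abuts the last block of level $\nu$.
\end{enumerate}
Identity (c) reduces, after clearing denominators, to the identity $10^{\nu+1}-19\cdot 10^{\nu-1}=81\cdot 10^{\nu-1}$, which is immediate from $10^{\nu+1}=100\cdot 10^{\nu-1}$.

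Putting these together: the blocks $B(\nu,k)$, traversed in the lexicographic order of $(\nu,k)$, form a strictly increasing sequence of disjoint finite intervals whose union is exactly $\{11,12,13,\ldots\}$. Consequently every integer $n\ge 11$ lies in exactly one block $B(\nu,k)$, which determines $\nu$ and $k$ uniquely, and then $j=n-F(\nu,k)\in\{1,\ldots,10^{k}\}$ is also unique. This is precisely the existence-and-uniqueness assertion of the lemma. The main (and really the only) obstacle is purely computational, namely carrying out the algebraic simplifications in (b) and (c) without slip; once those two identities are in hand, the tiling argument concludes the proof immediately.
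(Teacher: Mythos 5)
Your argument is correct, and in fact the paper offers nothing to compare it with: the author explicitly states Lemma \ref{sec8lem} without proof (``we omit the details and we just state the following lemma without proof''), so your block-tiling argument fills a genuine gap rather than duplicating a published one. I checked the three computations on which everything rests. Integrality: writing $\frac{10}{9}(10^{\nu-1}-1)=10R_{\nu-1}$ with $R_{\nu-1}$ the repunit, the inner bracket is $\equiv(\nu-1)-\nu+1\equiv 0\pmod 9$, so $F(\nu,k)\in\mathbb{N}\cup\{0\}$ as you claim. The anchor: $F(2,1)=\frac{10}{9}(10-2+1)=10$, so the first block is $\{11,\ldots,20\}$. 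The two abutment identities: $F(\nu,k+1)-F(\nu,k)=\frac{10}{9}\,10^{k-1}(10-1)=10^{k}$ for $k<\nu$, and
\[
F(\nu+1,1)-F(\nu,\nu)=\frac{10}{9}\Big(\frac{10}{9}\big(10^{\nu}-10^{\nu-1}\big)-10^{\nu-1}\Big)=\frac{10}{9}\big(10^{\nu}-10^{\nu-1}\big)=10^{\nu},
\]
which is your reduction $10^{\nu+1}-19\cdot 10^{\nu-1}=81\cdot 10^{\nu-1}$. Since the block $B(\nu,k)$ has exactly $10^{k}$ elements and its maximum is $F(\nu,k)+10^{k}$, these identities say that in the lexicographic order $(2,1),(2,2),(3,1),\ldots$ each block begins immediately after its predecessor ends; together with $F(\nu,k)\to+\infty$ this gives a partition of $\{11,12,\ldots\}$ into the intervals $B(\nu,k)$, hence existence and uniqueness of $(\nu,k,j)$ exactly as the lemma asserts. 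The only stylistic suggestion is to state explicitly, as a one-line induction, that the union of the first $N$ blocks is the interval from $11$ up to the maximum of the $N$-th block; that is the step that converts the two abutment identities into the covering claim, and it deserves a sentence of its own.
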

Define now the sequence $(\la_n)$ by
$$ \la_n=\Big( 1+\frac{1}{k} \Big)^{(\nu-k+1)10^k+j} \,\,\, \textrm{for} \,\,\, n\geq 11 ,$$ where for every given
positive integer $n$ with $n\geq 11$, the numbers $\nu $, $k$, $j$ are uniquely determined by Lemma
\ref{sec8lem}. It turns out, after a lengthy argument, that the sequence $(\la_n)$ has the desired properties.\\

{\bf Acknowledgements:} I am grateful to Professor Stephen Gardiner, who gave me the chance to deal with the
present work, for his useful remarks/comments and all the help he offered me during this project. I am also
grateful to Dr. Myrto Manolaki, who read this work, for drawing beautiful designs of the disks constructed in
section 4 and for her interest in this work. Finally, I am grateful to Professor George Costakis for showing
great interest in this work.

\end{document}